\begin{document} 
\def\bbR{\mathrm{I\!R}}
\def\bbZ{\mathsf{Z\hskip-4ptZ}}
\def\bbC{{\mathchoice {\setbox0=\hbox{$\displaystyle\rm C$}\hbox{\hbox
to0pt{\kern0.4\wd0\vrule height0.9\ht0\hss}\box0}}
{\setbox0=\hbox{$\textstyle\rm C$}\hbox{\hbox
to0pt{\kern0.4\wd0\vrule height0.9\ht0\hss}\box0}}
{\setbox0=\hbox{$\scriptstyle\rm C$}\hbox{\hbox
to0pt{\kern0.4\wd0\vrule height0.9\ht0\hss}\box0}}
{\setbox0=\hbox{$\scriptscriptstyle\rm C$}\hbox{\hbox
to0pt{\kern0.4\wd0\vrule height0.9\ht0\hss}\box0}}}}
\def\bbQ{{\mathchoice {\setbox0=\hbox{$\displaystyle\rm Q$}\hbox{\raise
0.15\ht0\hbox to0pt{\kern0.4\wd0\vrule height0.8\ht0\hss}\box0}}
{\setbox0=\hbox{$\textstyle\rm Q$}\hbox{\raise
0.15\ht0\hbox to0pt{\kern0.4\wd0\vrule height0.8\ht0\hss}\box0}}
{\setbox0=\hbox{$\scriptstyle\rm Q$}\hbox{\raise
0.15\ht0\hbox to0pt{\kern0.4\wd0\vrule height0.7\ht0\hss}\box0}}
{\setbox0=\hbox{$\scriptscriptstyle\rm Q$}\hbox{\raise
0.15\ht0\hbox to0pt{\kern0.4\wd0\vrule height0.7\ht0\hss}\box0}}}}
\def\bbQ{{\mathchoice {\setbox0=\hbox{$\displaystyle\rm Q$}\hbox{\raise
0.15\ht0\hbox to0pt{\kern0.4\wd0\vrule height0.8\ht0\hss}\box0}}
{\setbox0=\hbox{$\textstyle\rm Q$}\hbox{\raise
0.15\ht0\hbox to0pt{\kern0.4\wd0\vrule height0.8\ht0\hss}\box0}}
{\setbox0=\hbox{$\scriptstyle\rm Q$}\hbox{\raise
0.15\ht0\hbox to0pt{\kern0.4\wd0\vrule height0.7\ht0\hss}\box0}}
{\setbox0=\hbox{$\scriptscriptstyle\rm Q$}\hbox{\raise
0.15\ht0\hbox to0pt{\kern0.4\wd0\vrule height0.7\ht0\hss}\box0}}}}
\def\aff{\mathrm{A\hn f\hh f}\hs}
\def\hyp{\hskip.5pt\vbox
{\hbox{\vrule width3ptheight0.5ptdepth0pt}\vskip2.2pt}\hskip.5pt}
\def\dz{\mathcal{D}}
\def\dzp{\dz^\perp}
\def\fv{\mathcal{F}}
\def\wv{\mathcal{W}}
\def\vf{\varPhi}
\def\ep{E}
\def\zr{\mathcal{R}}
\def\vr{\mathcal{V}}
\def\cs{\mathcal{B}}
\def\zs{\mathcal{S}}
\def\vb{\mathcal{Z}}
\def\zb{\hat{\mathcal{Z}}}
\def\g{\mathtt{g}}
\def\lz{\mathcal{L}}
\def\xe{\mathcal{E}}
\def\hga{\hskip1.9pt\widehat{\hskip-1.9pt\gamma\hskip-1.6pt}\hskip1.6pt}
\def\hm{\hskip1.9pt\widehat{\hskip-1.9ptM\hskip-.2pt}\hskip.2pt}
\def\hu{\hskip1.9pt\widehat{\hskip-1.9ptU\hskip-.2pt}\hskip.2pt}
\def\hg{\hskip.9pt\widehat{\hskip-.9pt\g\hskip-.9pt}\hskip.9pt}
\def\hna{\hskip.2pt\widehat{\hskip-.2pt\nabla\hskip-1.6pt}\hskip1.6pt}
\def\hdz{\hskip.9pt\widehat{\hskip-.9pt\dz\hskip-.9pt}\hskip.9pt}
\def\hdp{\hskip.9pt\widehat{\hskip-.9pt\dz\hskip-.9pt}\hskip.9pt^\perp}
\def\w{^{\phantom i}}
\def\r{k}
\def\y{y}
\def\vp{{\tau\hskip-4.55pt\iota\hskip.6pt}}
\def\bc{C}
\def\mv{V}
\def\hs{\hskip.7pt}
\def\hh{\hskip.4pt}
\def\hn{\hskip-.4pt}
\def\nh{\hskip-.7pt}
\def\nnh{\hskip-1pt}
\def\op{\varTheta}
\def\gy{\lambda}
\def\gp{\mathrm{G}}
\def\hp{\mathrm{H}}
\def\Gm{\Gamma}
\def\lr{\langle\hh\cdot\hs,\hn\cdot\hh\rangle}
\def\ve{\varepsilon}
\def\yt{\eta}

\newtheorem{theorem}{Theorem}[section] 
\newtheorem{proposition}[theorem]{Proposition} 
\newtheorem{lemma}[theorem]{Lemma} 
\newtheorem{corollary}[theorem]{Corollary} 
  
\theoremstyle{definition} 
  
\newtheorem{defn}[theorem]{Definition} 
\newtheorem{notation}[theorem]{Notation} 
\newtheorem{example}[theorem]{Example} 
\newtheorem{conj}[theorem]{Conjecture} 
\newtheorem{prob}[theorem]{Problem} 
  
\theoremstyle{remark} 
  
\newtheorem{remark}[theorem]{Remark}

\renewcommand{\theequation}{\arabic{section}.\arabic{equation}}

\title[Rank-one ECS manifolds of di\-la\-tion\-al type]{Rank-one ECS manifolds
of di\-la\-tion\-al type}
\author[A. Derdzinski]{Andrzej Derdzinski} 
\address{Department of Mathematics, The Ohio State University, 
Columbus, OH 43210, USA} 
\email{andrzej@math.ohio-state.edu} 
\author[I.\ Terek]{Ivo Terek} 
\address{Department of Mathematics, The Ohio State University, 
Columbus, OH 43210, USA} 
\email{terekcouto.1@osu.edu} 
\thanks{The first author's research was supported in part by a 
FAPESP\hn-\hh OSU 2015 Regular Research Award (FAPESP grant: 2015/50265-6). 
The authors wish to thank the anonymous referee, whose suggestions 
allowed us to improve the exposition.}

\subjclass[2020]{Primary 53C50}
\def\leftmark{A.\ Derdzinski \&\ I.\ Terek}
\def\rightmark{Rank-one ECS manifolds of di\-la\-tion\-al type}

\begin{abstract}
We study ECS manifolds, that is, 
pseu\-do\hs-Riem\-ann\-i\-an manifolds with parallel Weyl tensor which are
neither con\-for\-mal\-ly flat nor locally symmetric. Every ECS manifold has
rank 1 or 2, the rank being the dimension of a distinguished null parallel
distribution discovered by Ol\-szak, and a rank-one ECS manifold may be called
translational or di\-la\-tion\-al, depending on whether the holonomy group of
a natural flat connection in the Ol\-szak distribution is finite or infinite. 
Some such manifolds are in a natural sense generic, which refers to the
algebraic structure of the  Weyl tensor. Various examples of compact rank-one
ECS manifolds are known: translational ones 
(both generic and non\-ge\-ner\-ic) in every dimension $\,n\ge5$, as well as
odd-di\-men\-sion\-al non\-ge\-ner\-ic di\-la\-tion\-al ones, some of which
are locally homogeneous. As we 
show, generic compact rank-one ECS manifolds must be translational or locally
homogeneous, provided that they arise as isometric quotients of a specific
class of explicitly constructed ``model'' manifolds. This result is relevant
since the clause starting with ``provided that'' may be dropped: according to
a theorem which we prove in another paper, the models just mentioned
include the isometry types of the pseu\-\hbox{do\hskip.7pt-}Riem\-ann\-i\-an 
universal coverings of all generic compact rank-one ECS manifolds.
Consequently, all generic compact rank-one ECS manifolds are translational.
\end{abstract}

\maketitle

\setcounter{section}{0}
\setcounter{theorem}{0}
\renewcommand{\thetheorem}{\Alph{theorem}}
\section*{Introduction}
\setcounter{equation}{0}
By {\it ECS manifolds\/} \cite{derdzinski-roter-07} one means those 
pseu\-\hbox{do\hskip.7pt-}Riem\-ann\-i\-an manifolds 
of dimensions $\,n\ge4\,$ which have parallel Weyl tensor, but not for one of
two obvious reasons: con\-for\-mal flatness or local symmetry. Both their  
existence, for every $\,n\ge4$, and indefiniteness of their metrics, are
results of Roter \cite[Corol\-lary~3]{roter},
\cite[Theorem~2]{derdzinski-roter-77}. Their local structure has been 
completely described in \cite{derdzinski-roter-09}.

The acronym `ECS' stands for {\it essentially con\-for\-mal\-ly symmetric}. 
On every ECS manifold $\,(M\nh,\g)\,$ there exists a naturally distinguished
null parallel distribution $\,\dz$, known as the {\it Ol\-szak 
distribution\/} \cite{olszak}, \cite[p.\ 119]{derdzinski-roter-09}. Its
dimension, necessarily equal to $\,1\,$ or $\,2$, is referred to 
as the {\it rank\/} of $\,(M\nh,\g)$. We call a rank-one ECS manifold 
{\it translational}, or {\it di\-la\-tion\-al}, when the holonomy group of the
flat connection in $\,\dz$, induced by the Le\-vi-Ci\-vi\-ta connection, is 
finite or, respectively, infinite.

Examples of {\it compact\/} rank-one ECS manifolds are known 
\cite{derdzinski-roter-10,derdzinski-terek-ne} to exist for every dimension 
$\,n\ge5$. They are all geodesically complete, translational, and none of them
is locally homogeneous. Quite recently \cite{derdzinski-terek-cl} we
constructed di\-la\-tion\-al-type compact rank-one ECS manifolds, including
lo\-cal\-ly-ho\-mo\-ge\-ne\-ous ones, in all odd dimensions $\,n\ge5$. 
It remains an open question whether a compact ECS manifold may
have rank two, or be of dimension four.

In Section~\ref{ro} we describe specific rank-one ECS {\it model manifolds\/}
\cite[p.\ 93]{roter}, representing all dimensions $\,n\ge4\,$ and all
indefinite metric signatures. Some of them are {\it generic}, which refers to
a self-ad\-joint linear en\-do\-mor\-phism $\,A\,$ of a
pseu\-\hbox{do\hs-}Euclid\-e\-an vector space used in constructing the model
manifold, and means that there are only finitely many linear iso\-metries
commuting with $\,A$. (In Remark~\ref{gnric} we point out that this
genericity 
is an intrinsic geometric property of the metric, and not just a condition 
imposed on the construction.)

The di\-la\-tion\-al 
examples of
\cite{derdzinski-terek-cl}, mentioned earlier, are all non\-ge\-ner\-ic,
while among the translational ones in 
\cite{derdzinski-roter-10,derdzinski-terek-ne}, some are generic, and others
are not, which raises an obvious question: Can a di\-la\-tion\-al-type compact
rank-one ECS manifold be generic? 
Theorem~\ref{maith} of the present paper, combined with results of
\cite{derdzinski-terek-ms} mentioned below, answers this question in the
negative:
\begin{equation}\label{all}
\mathrm{all\ generic\ compact\ rank}\hyp\mathrm{one\ ECS\ manifolds\
are\ translational.}
\end{equation}
Here are some details. 
Since the Ol\-szak distribution $\,\dz\,$ is a {\it real line bundle\/} over
the compact rank-one ECS manifold in question, the holonomy group $\,K\hs$ of
the flat connection in $\,\dz\,$ induced by the Le\-vi-Ci\-vi\-ta connection
is a countable multiplicative sub\-group of $\,\bbR\smallsetminus\{0\}\,$ (see 
Section~\ref{pr}), and we will repeatedly refer to
\begin{equation}\label{ppr}
\mathrm{the\ positive\ holonomy\ group\ }\,K\hskip-1.5pt_+\w
\nnh\nh=\hn K\nnh\cap(0,\infty)\,\mathrm{\ of\ the\ flat\ connection\ in\
}\,\dz.
\end{equation}
Our first main result, established in Section~\ref{pt}, can be stated as
follows.
\begin{theorem}\label{modif}In a generic compact isometric quotient of a
rank-one ECS model manifold, the group\/ $\,K\hskip-1.5pt_+\w$ in\/
{\rm(\ref{ppr})} is not infinite cyclic.
\end{theorem}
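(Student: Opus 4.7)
The plan is to exploit the Heisenberg structure of the isometry group of $(M,\g)$ -- derived from the Jacobi equation $a''=\theta Aa$ along the distinguished $t$-coordinate of the construction in Section~\ref{ro} -- and use the genericity hypothesis to force a symplectic-isotropy condition on $\Gm$ that is incompatible with cocompactness of $M/\Gm$.

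\textbf{Setup and isometry homomorphism.} The construction provides a distinguished global null parallel vector field $N$ trivializing $\dz$, and each isometry $\phi$ satisfies $\phi_*N=\ell(\phi)N$ for a unique $\ell(\phi)\in\bbR\smallsetminus\{0\}$, giving a Lie-group homomorphism $\ell:\mathrm{Isom}(M,\g)\to\bbR\smallsetminus\{0\}$ with $K\hskip-1.5pt_+\w=\ell(\Gm)\cap(0,\infty)$. In adapted coordinates $(t,y,s)$, the Killing fields in $\ker\ell$ are parametrized by pairs $(a,\psi)$ with $a:\bbR\to V$ solving $a''=\theta Aa$ and $\psi\in\bbR$, and they satisfy the Heisenberg bracket $[(a_1,\psi_1),(a_2,\psi_2)]=\omega(a_1,a_2)\,\partial_s$, where $\omega(a_1,a_2)=\langle a_1',a_2\rangle-\langle a_2',a_1\rangle$ is the symplectic Wronskian, constant in $t$ because $A$ is self-adjoint. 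Thus $\ker\ell$ contains a Heisenberg Lie group $H$ with center $Z(H)=\bbR\,\partial_s$, and $\mathrm{Isom}(M,\g)$ extends $H$ by the finite rotation group $\Sigma_A$ of linear $V$-isometries commuting with $A$ (finite precisely under the genericity hypothesis) and, in the dilational case, by a complementary one-parameter dilation subgroup.

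\textbf{Reduction and isotropy.} Suppose $K\hskip-1.5pt_+\w=\langle\lambda\rangle$ with $\lambda>1$, and pick $\gamma\in\Gm$ with $\ell(\gamma)=\lambda$. After replacing $\Gm$ by the finite-index subgroup with trivial $\Sigma_A$-component (preserving compactness of the quotient and infinite-cyclicity of $K\hskip-1.5pt_+\w$), we may assume $\Gm_0:=\Gm\cap\ker\ell\subset H$. A computation shows $\gamma_*X_{(a,\psi)}=X_{(a(\lambda\cdot),\lambda\psi)}$, so conjugation by $\gamma$ multiplies the central coordinate $\psi$ by $\lambda$, forcing $\Gm_0\cap Z(H)$ to be a discrete $\lambda$-invariant subgroup of $\bbR$, hence trivial. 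So every element of $\Gm_0$ has $\psi=0$. But the Heisenberg product gives $(a_1,0)\cdot(a_2,0)=(a_1+a_2,\frac{1}{2}\omega(a_1,a_2))$, and closure of $\Gm_0$ forces $\omega(a_1,a_2)=0$ whenever $(a_1,0),(a_2,0)\in\Gm_0$; hence the projection of $\Gm_0$ onto $W$ via $(a,0)\mapsto a$ is $\omega$-isotropic in the $2\dim V$-dimensional symplectic space $(W,\omega)$, so the $\bbZ$-rank of $\Gm_0$ is at most $\dim V$.

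\textbf{Contradiction and main obstacle.} Compactness of $M/\Gm$ forces $\Gm_0$ to act cocompactly on each leaf $\{t=t_0\}$ of the $H$-invariant foliation. Each such leaf is the $(\dim V+1)$-dimensional homogeneous space $H/L$, with $L\subset H$ the Lagrangian ``shear'' stabilizer of a basepoint, spanned by the Killing fields $X_{(a,0)}$ with $a(t_0)=0$. A rank comparison for discrete subgroups of simply-connected nilpotent Lie groups then requires the $\bbZ$-rank of $\Gm_0$ to be at least $\dim H-\dim L=\dim V+1$, contradicting the isotropy bound. The main technical obstacle is precisely this rank comparison: because $L$ is non-normal in $H$ and $\Gm_0$ acts via twisted Heisenberg rather than plain translations, one must verify the lower bound carefully using the explicit Lagrangian form of $L$ together with the Heisenberg parametrization of $\Gm_0$. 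This is where the specific nature of the Section~\ref{ro} models enters essentially.
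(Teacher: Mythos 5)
Your argument breaks down exactly at the step where the contradiction is supposed to appear. From the triviality of $\,\Gm_0\cap Z(H)\,$ you infer that \emph{every} element of $\,\Gm_0\,$ has central coordinate $\,\psi=0$; this does not follow. Triviality of the intersection with the center only rules out nontrivial elements of the form $\,(0,\psi)$, while an element $\,(a,\psi)\,$ with $\,a\ne0\,$ may well have $\,\psi\ne0$. What does follow, from the commutator identity $\,[(a_1,\psi_1),(a_2,\psi_2)]=(0,\omega(a_1,a_2))\in\Gm_0\cap Z(H)$, is that $\,\Gm_0\,$ is abelian and its projection to the solution space spans an $\,\omega$-isotropic subspace $\,\lz\,$ of dimension at most $\,\dim\mv\nh$. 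Consequently the correct upper bound for the $\,\bbZ$-rank of $\,\Gm_0\,$ is $\,\dim\mv+1\,$ (a lattice in $\,\bbR\times\lz$), not $\,\dim\mv\nh$. Since your lower bound coming from cocompactness of the leaves is exactly $\,\dim\mv+1$, the two estimates are compatible and no contradiction results. Indeed they must be compatible: the translational compact quotients of the very same models realize precisely such a rank-$(\dim\mv+1)$ lattice in $\,\bbR\times\lz\,$ with $\,\lz\,$ Lagrangian and the evaluation maps $\,\lz\to\mv\,$ isomorphisms, so no dimension count of this kind can separate the dilational case from the translational one.

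What your setup does correctly recover is, in essence, Lemma~\ref{gnrtr} and conditions (\ref{ace}): after the same finite-index reductions, the kernel of the degree homomorphism is a lattice $\,\Sigma'$ in $\,\bbR\times\lz\,$ with $\,\lz\,$ Lagrangian, $\,\bc\hh T\nh$-invariant, and evaluating isomorphically onto $\,\mv\,$ at each $\,t$. The actual contradiction is arithmetic rather than a rank comparison: conjugation by the generator $\,\hga\,$ preserves $\,\Sigma'$, so the induced linear map $\,\varPi\,$ of $\,\bbR\times\lz\,$ has characteristic polynomial with integer coefficients and constant term $\,\pm1$, i.e., a $\,\mathrm{GL}\hh(\bbZ)$-polynomial. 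Genericity forces $\,A\,$ to be a single nilpotent Jordan block (Remark~\ref{impnl}, Remark~\ref{nlpgn}, Theorem~\ref{gnnlp}), so the eigenvalues of $\,\bc\hh T\,$ on $\,\xe\,$ are the explicit numbers $\,q^{m+1-2j}\mu^\pm$ with $\,\mu^+\mu^-=q^{-1}$ (Theorem~\ref{spctr}). One then needs the classification (\ref{drg}) of irreducible $\,\mathrm{GL}\hh(\bbZ)$-polynomials with cyclic root group to force $\,\mu^\pm$ to be integer powers of $\,q\,$ (Lemma~\ref{cnseq}), and finally the combinatorial Theorem~\ref{noset} to show that no Lagrangian, invariant, everywhere-evaluating $\,\lz\,$ can select a compatible set of $\,m\,$ of these eigenvalues (Lemma~\ref{abcde}). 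None of this content is present in, or recoverable from, the isotropy-versus-cocompactness comparison you propose.
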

The next fact, which we prove at the very end of Section~\ref{co},
holds in a more abstract setting, with no reference to either genericity or
model manifolds.
\begin{theorem}\label{genrl}Given a compact rank-one ECS manifold\/
$\,(M\nh,\g)$, with\/ $\,K\hskip-1.5pt_+\w$ in\/ {\rm(\ref{ppr})} not infinite
cyclic, $\,K\hskip-1.5pt_+\w$ may be trivial, which makes\/ $\,(M\nh,\g)\,$
translational, or else\/ $\,K\hskip-1.5pt_+\w$ is dense in\/ $\,(0,\infty)$,
and then\/ $\,(M\nh,\g)\,$ must be locally homogeneous.
\end{theorem}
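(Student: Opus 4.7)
The plan is to proceed by case analysis on the algebraic structure of $K\hskip-1.5pt_+\w$. Via the logarithm, every subgroup of the multiplicative group $(0,\infty)$ corresponds to an additive subgroup of $\bbR$, and each additive subgroup of $\bbR$ is either trivial, infinite cyclic, or dense. Since the infinite cyclic case is excluded by hypothesis, $K\hskip-1.5pt_+\w$ is either $\{1\}$ or dense in $(0,\infty)$. In the first case, $K$ itself must lie in $\{-1,+1\}$: any negative $x\in K$ would give $x^{\hs2}\in K\hskip-1.5pt_+\w=\{1\}$, forcing $x=-1$. Thus $K$ is finite, and $(M\nh,\g)\,$ is translational by definition, yielding the first alternative.

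The remaining, substantive case is $K\hskip-1.5pt_+\w$ dense in $(0,\infty)$, where one must prove local homogeneity. My approach is to lift to the pseu\-do\hs-Riem\-ann\-i\-an universal cover $\widetilde{M}$, on which the line bundle $\dz$ trivializes and deck transformations of $\widetilde{M}\to M\,$ act as isometries whose action on a chosen nonzero parallel section of $\dz$ realizes $K\hh\subseteq\bbR\smallsetminus\{0\}$. In the local model form of rank-one ECS geometry developed in Section~\ref{ro}, there is a natural one-parameter family of homothety-like transformations rescaling sections of $\dz$, and the density hypothesis produces deck transformations realizing a dense set of such rescalings. Using compactness of $M\,$ for uniform control and the rigidity of pseu\-do\hs-Riem\-ann\-i\-an isometries (which are determined by their $1$-jets at a single point), one should extract, from convergent sequences of deck transformations, genuine continuous isometries of $\widetilde{M}$ whose orbits, combined with those of the ``translation'' action along $\dz$, sweep out open sets; this gives local homogeneity of $(M\nh,\g)$.

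The main obstacle is precisely the passage from a dense countable subgroup to a continuous one-parameter subgroup of isometries. A naive density argument in the full isometry group of $\widetilde{M}$ need not yield convergence of group elements, since the latter is in general a noncompact Lie group; one must leverage compactness of $M$, the countability of $K$, and the explicit structure of ECS models to guarantee that suitably normalized sequences of deck transformations converge to honest isometries, thereby generating a positive-dimensional connected Lie subgroup of $\mathrm{Isom}(\widetilde{M})$ acting locally transitively. It is here that the specific geometry of rank-one ECS manifolds, rather than purely group-theoretic reasoning, should do the essential work.
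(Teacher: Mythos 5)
Your first alternative is handled correctly: if $K_+=\{1\}$ then $K\subseteq\{1,-1\}$ is finite, so the manifold is translational by definition; and the trichotomy trivial/infinite cyclic/dense for subgroups of $(0,\infty)$ is exactly the right starting point. The problem is the second alternative, which is where the entire content of the theorem lies, and there you have only announced a strategy while explicitly deferring its hardest step. The passage from a dense countable group of holonomy rescalings to an actual continuous isometry is precisely what you call ``the main obstacle,'' and nothing in your text resolves it: sequences of distinct deck transformations cannot converge (proper discontinuity forces orbits to be discrete), and after whatever normalization you have in mind there is no a priori compactness in $\mathrm{Iso}$ of the universal cover to extract a limit. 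Worse, even if you obtained a one-parameter group of isometries, its orbits together with the flow along the Olszak distribution sweep out sets of dimension at most $2$ in a manifold of dimension $n\ge4$; local homogeneity requires local transitivity, so you would additionally need the levels of the function $t$ to be locally homogeneous, which you never address. So the argument has a genuine gap, not merely a missing detail.

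The paper's route is entirely different and avoids constructing any continuous symmetry. After passing to a finite cover to ensure transversal orientability (\ref{isp}), Lemma~\ref{image} identifies $K$ with the image of $\Gamma$ under (\ref{hom}-b), and the whole second alternative is delegated to Theorem~\ref{dilat}: one normalizes the parallel-gradient function $t$ so that $\Gamma$ acts on it by dilations $t\mapsto qt$ with $I=(0,\infty)$, and then invokes a canonical nonconstant function $f$ of $t$ (from the structure theory of \cite{derdzinski-terek-tc}) satisfying $f\circ\gamma=q^{-2}f$, i.e.\ $f(t)=q^{2}f(qt)$ for every $q$ in the holonomy image. Density of $K_+$ plus continuity of $f$ force $f(t)=f(1)/t^{2}$, so $|f|^{-1/2}$ is linear, and by the cited criterion \cite[Theorem 6.3]{derdzinski-terek-tc} this is \emph{equivalent} to local homogeneity of the universal cover. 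In other words, the paper converts the density hypothesis into a functional equation for a scalar invariant of the metric and reads off local homogeneity from a known characterization; if you want to complete your proof you should either carry out your limit construction in full (and supply the missing transitivity along the $t$-levels) or switch to an argument of this analytic type.
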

The third result trivially follows from Theorems~\ref{modif} 
and~~\ref{genrl}.
\begin{theorem}\label{maith}Every generic compact isometric quotient of a
rank-one ECS model manifold is either translational or locally homogeneous.

In the lo\-cal\-ly-ho\-mo\-ge\-ne\-ous case the group\/ {\rm(\ref{ppr})} is
dense in\/ $\,(0,\infty)$.
\end{theorem}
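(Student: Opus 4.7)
The plan is to derive Theorem~\ref{maith} as a direct consequence of Theorems~\ref{modif} and~\ref{genrl}, as the paper's own remark (``third result trivially follows'') suggests. First I would note that any compact isometric quotient of a rank-one ECS model manifold is itself a compact rank-one ECS manifold, since both the parallel-Weyl property and the rank of the Ol\-szak distribution $\,\dz\,$ are local invariants preserved under local isometries. Consequently the positive holonomy group $\,K_+\w$ of~(\ref{ppr}) is well defined on such a quotient, and both preceding theorems apply.

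The substantive step is then to invoke Theorem~\ref{modif} on the generic compact quotient, which yields that $\,K_+\w$ is \emph{not} infinite cyclic. This is precisely the standing hypothesis of Theorem~\ref{genrl}, which I would feed the same manifold into to obtain the dichotomy: either $\,K_+\w$ is trivial, in which case the full holonomy group $\,K\subset\bbR\smallsetminus\{0\}\,$ is contained in $\,\{\pm1\}\,$ and hence finite, so $\,(M\nh,\g)\,$ is translational by the definition recalled in the introduction; or else $\,K_+\w$ is dense in $\,(0,\infty)$, in which case $\,(M\nh,\g)\,$ must be locally homogeneous. This is exactly the first assertion of Theorem~\ref{maith}. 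The second assertion, that in the lo\-cal\-ly-ho\-mo\-ge\-ne\-ous case the group~(\ref{ppr}) is dense in $\,(0,\infty)$, is simply the restatement of the second alternative of Theorem~\ref{genrl} just invoked.

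I do not anticipate any real obstacle here: all the substance has been packed into Theorems~\ref{modif} and~\ref{genrl}, and the only check is that the conclusion of the former matches the hypothesis of the latter, which is immediate. The genuine mathematical work resides in proving those two theorems — one by model-specific analysis in Section~\ref{pt}, the other by an abstract argument at the end of Section~\ref{co} — and is not repeated here.
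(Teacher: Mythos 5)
Your proposal is correct and follows exactly the paper's route: the paper itself derives Theorem~\ref{maith} as a trivial consequence of Theorems~\ref{modif} and~\ref{genrl}, with the second assertion being just the second alternative of Theorem~\ref{genrl}. Your added checks (that the quotient is a compact rank-one ECS manifold so that $\,K\hskip-1.5pt_+\w$ is defined, and that triviality of $\,K\hskip-1.5pt_+\w$ gives the translational case) are sound and consistent with what the paper leaves implicit.
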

According to the final clause of our Theorem~\ref{dilat}, compact locally
homogeneous rank-one ECS model manifolds are necessarily
di\-la\-tion\-al. Theorem~\ref{maith} thus has the following consequence.
\begin{corollary}\label{equiv}For a generic compact rank-one ECS manifold
arising as an isometric quotient of a model manifold, the property of 
being di\-la\-tion\-al is equivalent to local homogeneity.
\end{corollary}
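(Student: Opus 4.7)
The plan is to verify the biconditional by treating each implication separately, each being an almost immediate consequence of a result already in the paper.

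For the direction asserting that local homogeneity implies dilationality, I would simply appeal to the final clause of Theorem~\ref{dilat}, which states that any compact locally homogeneous rank-one ECS model manifold is necessarily dilational. No further argument is needed in this direction.

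For the converse, dilational $\Rightarrow$ locally homogeneous, I would invoke Theorem~\ref{maith}: a generic compact isometric quotient of a rank-one ECS model manifold must be either translational or locally homogeneous. Since the manifold is assumed dilational, the holonomy group $\,K\hs$ of the flat connection in $\,\dz\,$ is infinite, and therefore $\,K\hskip-1.5pt_+\w$, being of index at most two in $\,K\hs$, is infinite as well. In particular the manifold cannot be translational, since translationality would force $\,K\hs$ to be finite. The only remaining alternative from Theorem~\ref{maith} is local homogeneity.

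I do not anticipate a substantive obstacle: the corollary is a purely formal combination of Theorems~\ref{maith} and~\ref{dilat}, the one point worth noting being the definitional fact that translationality and dilationality are mutually exclusive.
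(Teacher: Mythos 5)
Your proposal is correct and matches the paper's own derivation: the paper obtains this corollary precisely by combining Theorem~\ref{maith} with the final clause of Theorem~\ref{dilat} (local homogeneity forces the dilational case), exactly as you do. The only cosmetic difference is that your detour through $K\hskip-1.5pt_+$ is unnecessary, since translational and dilational are already mutually exclusive by definition (finite versus infinite holonomy group of the flat connection in $\dz$).
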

Both Theorem~\ref{maith} and Corollary~\ref{equiv} do not really require 
assuming that the manifold is an isometric quotient of a model. Namely, as 
we show in \cite[Corollary D]{derdzinski-terek-ms}, {\it the 
pseu\-\hbox{do\hskip.7pt-}Riem\-ann\-i\-an universal covering of any generic 
compact rank-one ECS manifold is necessarily isometric to one of the model
manifolds}.

Furthermore, according to another result (Theorem E) of the same paper 
\cite{derdzinski-terek-ms}, a generic compact rank-one 
ECS manifold cannot be locally homogeneous. Thus, the final clause of our 
Theorem~\ref{maith} is actually vacuous, and (\ref{all}) follows.
However, Theorem~\ref{maith}, precisely as stated here, 
is a crucial step in the arguments of \cite{derdzinski-terek-ms}.

The paper is organized as follows. 
Sections~\ref{co} and~\ref{ro}, dealing with rank-one ECS manifolds, are
followed by some material from linear algebra and algebraic number theory
(genericity of nil\-po\-tent self-ad\-joint linear en\-do\-mor\-phisms of 
pseu\-\hbox{do\hs-}Euclid\-e\-an spaces, and the cyclic root-group condition 
for $\,\mathrm{GL}\hh(\bbZ)$-pol\-y\-no\-mi\-als), in Sections~\ref{gs}
and~\ref{gp}. Those two are separated by a section devoted to sub\-spaces of 
certain spaces $\,\xe\hs$ of vec\-tor-val\-ued functions on $\,(0,\infty)$,
invariant under an operator $\,\bc\hh T:\xe\nh\to\xe\hs$ which is relevant to
the existence question for generic compact isometric quotients of rank-one
ECS model manifolds. After Section~\ref{ca}, presenting a 
combinatorial argument (Theorem~\ref{noset}) needed to establish 
Theorem~\ref{modif}, comes the final Section~\ref{pt}, where we prove
Theorem~\ref{modif} by contradiction, 
assuming that its hypotheses hold and yet $\,K\hskip-1.5pt_+\w$ in (\ref{ppr})
is infinite cyclic. Lemma~\ref{gnrtr} provides the first important consequence
of this assumption: the existence of a $\,\bc\hh T\nh$-in\-var\-i\-ant vector
sub\-space, of the type discussed in Section~\ref{is}, with the additional
properties (\ref{ace}). Such a sub\-space necessarily satisfies further
conditions, listed in Lemma~\ref{abcde}, and leading -- for reasons stated at
the very end of Section~\ref{pt} --  to a combinatorial structure, the
existence of which contradicts Theorem~\ref{noset}.


\renewcommand{\thetheorem}{\thesection.\arabic{theorem}}
\section{Preliminaries}\label{pr}
\setcounter{equation}{0}
Unless stated otherwise, manifolds and mappings are 
smooth, the former connected. 
The group $\aff(\bbR)\hs$ of af\-fine transformations 
$\hs t\hn\mapsto\nh qt+p\hs$ of $\bbR$, with real $p$ and $\,q\ne0$, has the
in\-dex-two sub\-group $\aff^+\nh(\bbR)\nh=\nh\{(q,p)\in\aff(\bbR):q>0\}$, and
\begin{equation}\label{ntr}
\mathrm{nontrivial\ finite\ sub\-groups\ of\ }\,\aff(\bbR)\,
\mathrm{\ have\ the\ form\ }\,\{(1,0),(-\nnh1,2c)\}
\end{equation}
with any center $\,c\in\bbR\,$ of the reflection $\,(-\nnh1,2c)$. In fact, the
square of any $\,(q,p)\,$ in such a sub\-group $\,\Xi\,$ lies in 
the intersection $\,\Xi\hs\cap\hh\aff^+\nh(\bbR)$, which due to its finiteness
must consist of translations, and hence be trivial.

Every $\,(q,p)\in\aff^+\nh(\bbR)\smallsetminus\{(1,0)\}\,$ is either a
translation ($q=1$), or has a unique fixed point $\,c\,$ (and then we call it 
a {\it di\-la\-tion\/} with center $\,c$, since by choosing $\,c\,$ as the new
origin we turn $\,c\,$ into $\,0\,$ and $\,(q,p)\,$ into $\,(q,0)$). Now,
\begin{equation}\label{abs}
\begin{array}{l}
\mathrm{any\hs\ Abel\-i\-an\hs\ sub\-group\hs\ of\hs\
}\,\hs\aff^+\nh(\bbR)\hs\,\mathrm{\hs\ consists\hs\ of}\\
\mathrm{translations,\ or\ of\ di\-la\-tions\ with\ a\ single\ center,}
\end{array}
\end{equation}
as two commuting self-mappings of a set preserve each other's fix\-ed-point
sets, and so in $\,\aff^+\nh(\bbR)\smallsetminus\{(1,0)\}\,$ two di\-la\-tions
with different centers cannot commute with each other or with a translation.
\begin{lemma}\label{point}Let\/ $\,(\hh\cdot\hs,\hn\cdot\hh)\,$ be a symmetric
bi\-lin\-ear form in a real vector space. If a coset\/ $\,S\,$ of a\/
$\,(\hh\cdot\hs,\hn\cdot\hh)$-null one-di\-men\-sion\-al sub\-space\/ $\,Q\,$ 
is not contained in the $\,(\hh\cdot\hs,\hn\cdot\hh)$-or\-thog\-o\-nal 
complement of\/ $\,Q$, then\/ $\,S\,$ contains a unique\/ 
$\,(\hh\cdot\hs,\hn\cdot\hh)$-null vector.  
\end{lemma}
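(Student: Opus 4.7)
The plan is to reduce the claim to finding a unique root of a linear polynomial in one variable. Pick a nonzero vector $q\in Q$ (so that $Q=\bbR q$) and any $v_0\in S$, so that $S=v_0+\bbR q$. Since $Q$ is null, $(q,q)=0$, hence for every $s=v_0+tq\in S$ we have $(s,q)=(v_0,q)$; in other words, the function $s\mapsto(s,q)$ is constant on $S$. Consequently, the hypothesis that $S\not\subset Q^\perp$ is equivalent to the single inequality $(v_0,q)\ne 0$.

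Next, I would expand the quadratic form along $S$: for $s=v_0+tq$,
\[
(s,s)\,=\,(v_0,v_0)+2t(v_0,q)+t^2(q,q)\,=\,(v_0,v_0)+2t(v_0,q),
\]
again using $(q,q)=0$. Because $(v_0,q)\ne 0$, this is an honest (non-constant) affine function of $t\in\bbR$, so it has a unique zero, namely $t=-(v_0,v_0)/[2(v_0,q)]$. Translating back, this produces exactly one null vector in $S$, giving both existence and uniqueness.

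There is essentially no obstacle here: the only thing to observe carefully is that the pairing with $q$ is constant on $S$, which is what turns the a priori quadratic equation $(s,s)=0$ into a linear one and thereby forces the solution to be unique. The hypothesis ``$S\not\subset Q^\perp$'' is used precisely, and in the sharpest possible way, to guarantee that the coefficient of $t$ does not vanish.
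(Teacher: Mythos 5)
Your proof is correct and follows the same route as the paper: parametrize $S$ as $v_0+tq$, use $(q,q)=0$ to reduce $(s,s)=0$ to the affine equation $(v_0,v_0)+2t(v_0,q)=0$, and invoke $(v_0,q)\ne0$ (which is what $S\not\subset Q^\perp$ means) to get a unique root. The only difference is that you spell out why the hypothesis is equivalent to $(v_0,q)\ne0$, which the paper leaves implicit.
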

In fact, $\,S\,$ is parametrized by $\,t\mapsto x=v+tu$, where $\,u\,$ spans
$\,Q\,$ and $\,(v,u)\ne0$, so that $\,(x,x)=(v,v)+2t(v,u)\,$ vanishes for a
unique $\,t\in\bbR$.

Let a group $\,\Gm\hs$ act on a manifold $\,\hm\,$ freely by 
dif\-feo\-mor\-phisms. One calls the action of $\,\Gm\hs$ {\it properly
dis\-con\-tin\-u\-ous\/} if there exists a locally dif\-feo\-mor\-phic
surjective mapping $\,\pi:\hm\to M\,$ onto some manifold $\,M$ such that the 
$\,\pi$-pre\-im\-ages of points of $\,M\,$ coincide with the orbits of the 
$\,\Gm\hs$ action. One
then refers to $\,M\,$ as the {\it quotient\/} of $\,\hm\,$ under the action
of $\,\Gm\hs$ and writes $\,M=\hm\nnh/\hh\Gm$.

For $\,\pi,\hm\nh,M,\Gm\hs$ as above and a flat linear connection
$\,\nabla\hs$ in a vector bundle $\,\vb$ over $\,M\nh$, let $\,\zb\,$ and
$\,\hna\hs$ be the $\,\pi$-pull\-backs of $\,\vb,\nabla\hs$ to $\,\hm\nh$. If
$\,\hm\,$ is also simply connected, the vector space $\,\fv\,$ of all
$\,\hna\nh$-par\-al\-lel sections of $\,\zb\,$ trivializes $\,\zb$, and a
homo\-morphism $\,\Gm\to\mathrm{GL}\hh(\fv)$, known as
the {\it holonomy representation\/} of $\,\nabla\nh$, assigns to
$\,\gamma\in\Gm\hs$ the composite iso\-morphism
\begin{equation}\label{cpi}
\fv\,\to\,\zb\hn_y\w\hs\to\,\vb\hn_x\w\hs\to\,\zb\hn_{\gamma(y)}\w\hs
\to\,\fv\hh,
\end{equation}
described with the aid of any given $\,y\in\hm\,$ and $\,x=\pi(y)$, where the
two middle
arrows denote the identity auto\-mor\-phism of 
$\,\zb\hn_y\w\nh=\vb\hn_x\w\nh=\zb\hn_{\gamma(y)}\w$, and the
first/last one is the evaluation operator or its inverse. Note that
(\ref{cpi}) does not depend on the choice of $\,y\in\hm\nh$, being locally
(and hence globally) constant as a function of $\,y$. To see this, we choose
connected neighborhoods $\,\,\hu\,$ of $\,y\,$ in $\,\hm\,$ and
$\,\,U\,$ of $\,x=\pi(y)$ in $\,M\,$ such that $\,\vb\,$ restricted to
$\,\,U\,$ is trivialized by the space $\,\fv\nh\nnh_U\w$ of its
$\,\nabla\nh$-par\-al\-lel sections and $\,\pi\,$ maps $\,\,\hu\,$
dif\-feo\-mor\-phi\-cal\-ly onto $\,\,U\nh$. The iso\-mor\-phism
$\,\fv\to\fv\nh\nnh_U\w$ arising as the restriction to $\,\,\hu\,$ followed by
the ``identity'' identification via $\,\pi\,$ then allows us to apply 
(\ref{cpi}) to a fixed section from $\,\fv\nh$, using all $\,y\in\hu\,$ at
once.

When $\,\vb\,$ is a real line bundle, with the multiplicative group
\hbox{$\,\mathrm{GL}\hh(\fv)=\bbR\smallsetminus\{0\}$,}
\begin{equation}\label{hol}
\begin{array}{l}
\mathrm{for\ any\ }\,x\,\in\hs M\nh\mathrm{,\ the\ image\ of\ the\ holonomy\ 
representation}\\
\Gm\nh\to\hs\bbR\nnh\smallsetminus\nnh\{0\}\mathrm{\ coincides\ with\ the\
holonomy\ group\ of\ }\hs\nabla\nh\mathrm{\ at\ }\nh x,
\end{array}
\end{equation}
the latter meaning the group of the $\,\nabla\nh$-par\-al\-lel transports
$\,\vb\hn_x\w\to\vb\hn_x\w$ along all the loops at $\,x$. In fact,
if (\ref{cpi}) assigns to $\,\gamma\in\Gm\hs$ the multiplication by 
$\,q\in\bbR\smallsetminus\{0\}$ and $\,y\in\pi^{-\nnh1}\nh(x)\,$ is fixed,
the $\,\nabla\nh$-par\-al\-lel transport $\,\op\,$ along the $\,\pi$-im\-age
of any curve joining $\,y\,$ to $\,\gamma(y)\,$ in $\,\hm\,$ is 
$\,\fv\gets\zb\hn_y\w\nh\gets\vb\hn_x\w$ followed by
$\,\mathrm{Id}\hn_\fv\w$ followed by
$\,\vb\hn_x\w\gets\zb\hn_{\gamma(y)}\w\nh\gets\fv\hh$, the reversed arrows
representing the inverses of those in (\ref{cpi}). Writing 
$\,\mathrm{Id}\hn_\fv\w$ as $\,q\nh^{-\nnh1}$ times (\ref{cpi}), we get
$\,\op\,$ equal to $\,q\nh^{-\nnh1}$ times the identity of 
$\,\vb\hn_x\w$.
\begin{lemma}\label{flows}Suppose that\/
$\,q\in\bbR\smallsetminus\{1,-\nnh1\}\,$ and a dif\-feo\-mor\-phism\/ 
$\,\gamma\in\mathrm{Diff}\hs\hm$ of a manifold\/ $\,\hm\,$ pushes a 
complete nontrivial vector field\/ $\,w\,$ forward onto\/ $\,qw$. If\/ 
$\,\bbR\ni t\mapsto\phi(t,\,\cdot\,)\in\mathrm{Diff}\hs\hm\,$ denotes the flow 
of\/ $\,w$, while a sub\-group\/ $\,\Gm\subseteq\mathrm{Diff}\hs\hm$ 
contains\/ $\,\gamma\,$ and\/ $\,\phi(t,\,\cdot\,)\,$ for some\/ $\,t\ne0$,
then the action of\/ $\,\Gm$ on\/ $\,\hm\,$ cannot be properly discontinuous.
\end{lemma}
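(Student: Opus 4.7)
The plan is to leverage the conjugation identity forced by $\,\gamma_*w\nh=\nh qw\,$ in order to produce a sequence of elements $\,\phi(s_k,\,\cdot\,)\nh\in\nh\Gm\,$ with $\,s_k\nh\to\nh 0\,$ but $\,s_k\nh\ne\nh 0$, and then to show that at any point where $\,w\,$ does not vanish this sequence creates an accumulation inside a single $\,\Gm$-orbit, contradicting the fact that a properly discontinuous $\,\Gm$-action as defined in the paper has $\,\pi$-fibers which are discrete closed subsets of $\,\hm$.

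Concretely, I would first record that $\,\gamma_*w\nh=\nh qw\,$ is equivalent to the flow identity $\,\gamma\circ\phi(s,\,\cdot\,)\circ\gamma^{-\nnh1}\nh=\phi(qs,\,\cdot\,)\,$ for every $\,s\nh\in\nh\bbR$, since both sides of the latter are flows of the same vector field $\,qw\,$ and agree at $\,s\nh=\nh0$. Iterating in $\,\gamma\,$ yields $\,\gamma^n\circ\phi(t,\,\cdot\,)\circ\gamma^{-n}\nh=\phi(q^n t,\,\cdot\,)\,$ for every $\,n\nh\in\nh\bbZ$, and, because $\,\gamma\,$ and $\,\phi(t,\,\cdot\,)\,$ lie in $\,\Gm$, each $\,\phi(q^n t,\,\cdot\,)\,$ does too. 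Since $\,|q|\nh\ne\nh1$, exactly one of the two sequences $\,\{q^n t\}_{n\ge 0}\,$ and $\,\{q^{-\nnh n}t\}_{n\ge 0}\,$ tends to $\,0\,$ while remaining nonzero, giving the desired sequence $\,s_k\nh\to\nh0\,$ with $\,s_k\nh\ne\nh0\,$ and $\,\phi(s_k,\,\cdot\,)\in\Gm$.

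Finally I would choose $\,y\in\hm\,$ with $\,w(y)\ne0$, which exists because $\,w\,$ is nontrivial, and note that the integral curve $\,s\mapsto\phi(s,y)\,$ is a local embedding at $\,s\nh=\nh0$, so $\,\phi(s_k,y)\ne y\,$ for all large $\,k$, while $\,\phi(s_k,y)\to y$. Were the action of $\,\Gm\,$ on $\,\hm\,$ properly discontinuous, the locally diffeomorphic surjection $\,\pi:\hm\to M\,$ would restrict to a diffeomorphism from some neighborhood $\,\hu\,$ of $\,y\,$ onto a neighborhood of $\,\pi(y)$; the preimage $\,\pi^{-\nnh1}(\pi(y))$, which by definition coincides with the $\,\Gm$-orbit of $\,y$, would therefore meet $\,\hu\,$ only at $\,y$, contradicting the clustering of the points $\,\phi(s_k,y)\,$ produced above. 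The only step with any content is the equivalence of $\,\gamma_*w\nh=\nh qw\,$ with the conjugation identity for the flow, which is classical; the rest is a direct comparison between the $\,\Gm$-orbit of $\,y\,$ and a chart in which $\,\pi\,$ is injective, so I do not anticipate a real obstacle.
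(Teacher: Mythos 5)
Your proof is correct and follows essentially the same route as the paper's: both derive the conjugation identity $\gamma^k\circ\phi(t,\,\cdot\,)\circ\gamma^{-k}=\phi(q^kt,\,\cdot\,)$ from $\gamma_*w=qw$, use it to place $\phi(q^{\pm k}t,\,\cdot\,)$ in $\Gm$ with parameters tending to $0$, and then contradict proper discontinuity by accumulating orbit points at a point where $w$ does not vanish. Your unpacking of the contradiction via a neighborhood on which $\pi$ is injective is just a slightly more explicit version of the paper's closing remark that such a sequence ``obviously precludes proper discontinuity.''
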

\begin{proof}The $\,k\hh$th iteration $\,\gamma^k$ of $\,\gamma$,
for $\,k\in\bbZ$, pushes $\,w$ forward onto $\,q^kw$, giving 
$\,\gamma^k\nnh\circ\phi(t,\,\cdot\,)=\phi(q^kt,\,\cdot\,)
\circ\gamma^k$ for all $\,t\,$ and all $\,k\in\bbZ$, so that
$\,\phi(q^kt,\,\cdot\,)\in\Gm\hs$ with
our fixed $\,t$. Choosing $\,x\in\hm\,$ such that $\,w\nh_x\w\ne0$, and
setting $\,\yt=\mathrm{sgn}\hh(1-|q|)$, we thus get a sequence
$\,\phi(q\hh^{\yt\hn k}t,x)\,$
with mutually distinct terms when $\,k\,$ is large, tending to $\,x$ as
$\,k\to\infty$, which obviously precludes proper discontinuity.
\end{proof}
The conclusion of Lemma~\ref{flows} remains valid when, instead of
$\,\phi(qt,\,\cdot\,)\in\Gm$ for some $\,t$, one assumes periodicity of
the flow of $\,w$, and replaces the condition
$\,\gamma,\phi(t,\,\cdot\,)\in\Gm\,$ with just $\,\gamma\in\Gm\hs$ (and then
uses $\,t\,$ equal to the period of the flow).
\begin{remark}\label{bndpr}
A sub\-mer\-sion from a compact manifold into a connected manifold is 
a bundle projection, which is the compact case of Ehres\-mann's 
fibration theorem \cite[Corollary 8.5.13]{dundas}.
\end{remark}

\section{Compact rank-one ECS manifolds}\label{co}
\setcounter{equation}{0}
Throughout this section $\,(\hm\nh,\hg)\,$ is the pseu\-do\hs-Riem\-ann\-i\-an
universal covering space of a compact rank-one
ECS manifold $\,(M\nh,\g)\,$ of dimension $\,n\ge4$, defined as in the
Introduction, $\,\dz\,$ stands for the (one-di\-men\-sion\-al, null,
parallel) Ol\-szak distribution on $\,(M\nh,\g)$, and $\,\dzp$ for its
orthogonal complement, while $\,\hdz,\,\hdp$ are 
the analogous distributions on $\,(\hm\nh,\hg)$. Thus,
$\,M\nh=\hm\nnh/\hh\Gm\hs$ 
for a sub\-group $\,\Gm\,$ of the full isom\-e\-try group 
$\,\mathrm{Iso}\hs(\hm\nh,\hg)\,$ iso\-mor\-phic to
the fundamental group of $\,M\nh$, and acting on $\,\hm\,$
freely and properly dis\-con\-tin\-u\-ous\-ly via deck transformations. 
The connection in $\,\hdz\,$ induced by the Le\-vi-Ci\-vi\-ta connection
$\,\hna\,$ of $\,(\hm\nh,\hg)\,$ is always flat
\cite[Sect.\ 9]{derdzinski-terek-tc}. Thus, due to simple connectivity of 
$\,\hm\nh$,
\begin{equation}\label{dsp}
\begin{array}{l}
\hdz\hs\mathrm{\ is\ spanned\ by\ the\ parallel\ gradient}\\ 
\hna\hn t\,\hs\mathrm{\ of\ a\ surjective\ function\ }\,t\nnh:\nnh\hm\nh\to I
\end{array}
\end{equation}
onto an open interval $\,I\nh\subseteq\bbR\,$ (which is the case even
without assuming the existence of a compact quotient).
The Olszak distribution being a local geometric invariant of the ECS metric
in question \cite[Sect.\ 2]{derdzinski-roter-09}, (\ref{dsp}) determines
$\,\hna\hn t\,$ and $\,t\,$ uniquely up to multiplication by
nonzero constants and, respectively, af\-fine substitutions, meaning  
replacements of $\,t\,$ with $\,qt+p$, where $\,(q,p)\in\aff(\bbR)\,$ (for
$\,\aff(\bbR)\,$ as in Section~\ref{pr}: $\,q,p\in\bbR\,$ and $\,q\ne0$).
Consequently, we have group homo\-mor\-phisms
\begin{equation}\label{hom}
\mathrm{a)}\hskip7pt\mathrm{Iso}\hs(\hm\nh,\hg)\ni\gamma\mapsto(q,p)
\in\aff(\bbR)\hh,
\hskip11pt
\mathrm{b)}\hskip7pt\mathrm{Iso}\hs(\hm\nh,\hg)\ni\gamma\mapsto
q\in\bbR\smallsetminus\{0\}\hh,
\end{equation}
characterized, for any $\,\gamma\in\mathrm{Iso}\hs(\hm\nh,\hg)$, 
by $\,t\circ\gamma=qt+p\,$ and $\,\gamma^*\hn dt=q\,dt$, that is,
\begin{equation}\label{dgn}
(d\gamma)\hna\hn t\,=\,q\nh^{-\nnh1}\hna\hn t\hh.
\end{equation}
According to
\cite[formula (5.4) and the end of Sect.\ 11]{derdzinski-terek-tc},
\begin{equation}\label{ker}
\begin{array}{l}
\hdp\nh=\,\hs\mathrm{Ker}\,dt\mathrm{,\ the\ levels\ of\ 
}\,\,t:\hm\nh\to I\,\hs\mathrm{\ are\ all}\\
\mathrm{connected\ and\ coincide\ with\ the\ leaves\ of\ }\,\hdp\nnh.
\end{array}
\end{equation}
\begin{lemma}\label{image}The above hypotheses imply that the image of\/
$\,\Gm\hs$ under\/ {\rm(\ref{hom}-a)} is infinite, while its 
image under\/ {\rm(\ref{hom}-b)} coincides with the holonomy group of the
flat connection in\/ $\,\dz$.
\end{lemma}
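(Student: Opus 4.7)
I would handle the two claims separately. For the infiniteness of the image of {\rm(\ref{hom}-a)}, I would argue by contradiction. If the image were finite, then {\rm(\ref{ntr})} says it is trivial or of order two, so its kernel $\,\Gm_0\,$ has index at most two in $\,\Gm$, which makes $\,\hm\nnh/\Gm_0\,$ a finite-sheeted cover of $\,M\nh$, and in particular compact. Since every $\,\gamma\in\Gm_0\,$ satisfies $\,t\circ\gamma=t$, the function $\,t\,$ descends to a continuous surjection onto $\,I$; but $\,I\,$ would then be the continuous image of a compact space and hence compact, contradicting its being an open interval.

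For the second claim, recall from {\rm(\ref{dsp})} and the flatness of the connection in $\,\hdz\,$ that the space $\,\fv\,$ of $\,\hna$-parallel sections of $\,\hdz\,$ is one-dimensional, spanned by $\,\hna\hn t$, so the holonomy representation from {\rm(\ref{cpi})} lands in $\,\bbR\smallsetminus\{0\}$. The plan is to compute it explicitly on the generator $\,\hna\hn t$. Fix $\,y\in\hm\,$ and set $\,x=\pi(y)$. Using $\,\hdz=\pi^*\dz$, the two ``identity'' middle arrows of {\rm(\ref{cpi})} become $\,d\pi_y:\hdz_y\to\dz_x\,$ and $\,(d\pi_{\gamma(y)})^{-1}:\dz_x\to\hdz_{\gamma(y)}$, and from $\,\pi\circ\gamma=\pi\,$ their composition equals $\,d\gamma_y$. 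Applying {\rm(\ref{cpi})} to $\,\hna\hn t\,$ and invoking {\rm(\ref{dgn})} in its pushforward form $\,\gamma_*\hna\hn t=q^{-1}\hna\hn t\,$ (which follows from the isometry identity $\,\gamma^*\hna\hn t=\hna(t\circ\gamma)=q\hh\hna\hn t$) gives that $\,\gamma\,$ acts on $\,\fv\cong\bbR\,$ by multiplication by $\,q^{-1}$. Since the image of the homomorphism {\rm(\ref{hom}-b)} is a subgroup of $\,\bbR\smallsetminus\{0\}\,$ and therefore closed under inversion, it coincides with the set of such $\,q^{-1}$, which by {\rm(\ref{hol})} is the holonomy group at $\,x$.

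I do not anticipate any genuine obstacle here. The only point needing care is to pin down what the ``identity'' identifications in {\rm(\ref{cpi})} mean under the local diffeomorphism $\,\pi$, so that {\rm(\ref{dgn})} can be plugged in cleanly; once that is in place the argument is entirely routine, and in particular neither Lemma~\ref{flows} nor Remark~\ref{bndpr} is needed.
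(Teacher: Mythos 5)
Your proof is correct. The second claim is handled essentially as in the paper: you identify the two middle arrows of (\ref{cpi}) with $\,d\gamma_y\w$ via $\,\pi\circ\gamma=\pi$, apply (\ref{dgn}) to the generator $\,\hna\hn t\,$ of the one-di\-men\-sion\-al space of parallel sections coming from (\ref{dsp}), and invoke (\ref{hol}); your observation that the image of (\ref{hom}-b) is a subgroup, hence closed under inversion, correctly reconciles the resulting factor $\,q^{-1}$ with the statement. For the first claim you take a genuinely different route. The paper stays on $\,M\,$ itself: if the image were finite it would lie in some $\,\{(1,0),(-\nnh1,2c)\}\,$ by (\ref{ntr}), so $\,(t-c)^2$ would descend to a nonconstant function on the compact manifold $\,M\,$ with at most one critical value (as $\,dt\,$ never vanishes, the only possible critical value is $\,0$), which is absurd since the maximum and minimum of a nonconstant function on a compact manifold are two distinct critical values. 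You instead pass to the kernel $\,\Gm_0\w$ of (\ref{hom}-a) restricted to $\,\Gm$, of index at most two by (\ref{ntr}), and use compactness of the finite isometric covering $\,\hm\nnh/\hh\Gm_0\w$ together with surjectivity of the descended $\,t\,$ onto the noncompact open interval $\,I$. Both arguments are short; yours avoids the critical-value count at the modest cost of the (standard) fact that the quotient by a fi\-nite-in\-dex subgroup of the deck group is again a compact manifold. I see no gaps.
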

\begin{proof}The first image, if finite, would, lie within some 
$\,\{(1,0),(-\nnh1,2c)\}$, cf.\ (\ref{ntr}), causing $\,(t-c)^2$ to
descend to a nonconstant function with at most one critical value
on the compact manifold $\,M\nh$. The second claim follows from (\ref{hol}): 
by (\ref{dsp}) and (\ref{dgn}), the action (\ref{cpi}) of any
$\,\gamma\in\Gm\hs$ on the parallel section $\,\hna\hn t\,$ spanning
$\,\hdz\,$ equals the multiplication by the corresponding $\,q\nh^{-\nnh1}\nh$.
Namely, the two middle arrows in (\ref{cpi}) now are restrictions of 
$\,d\pi_y\w$ and $\,[d\pi_{\gamma(y)}\w]\nh^{-\nnh1}\nh$, so that their
composite $\,\zb\hn_y\w\nh\to\vb\hn_x\w\nh\to\zb\hn_{\gamma(y)}\w$ equals
$\,d\gamma\nh_y\w$. (From $\,\pi\circ\gamma=\pi\,$ we get
$\,d\pi_{\gamma(y)}\w\circ\,d\gamma\nh_y\w=\,d\pi_y\w$.) Thus, (\ref{cpi})
takes
$\,w=\hna\hn t\,$ first to $\,w\hn_y\w$, then (two successive arrows) to
$\,d\gamma\nh_y\w w\hn_y\w$ which -- by (\ref{dgn}) -- equals
$\,q\nh^{-\nnh1}\nh w\hn_{\gamma(y)}\w$, the evaluation at $\,\gamma(y)\,$ of 
$\,q\nh^{-\nnh1}\nh w$.
\end{proof}
The trans\-la\-tion\-al/\hn di\-la\-tion\-al dichotomy of the Introduction,
meaning fi\-ni\-te\-ness/in\-fi\-ni\-te\-ness of the holonomy group of the
flat connection in $\,\dz\,$ induced by the Le\-vi-Ci\-vi\-ta connection of
$\,\g$, can now be summarized in terms of the homo\-mor\-phism
(\ref{hom}-b) restricted to $\,\Gm\nh$. Specifically, by 
Lemma~\ref{image}, the two cases are
\begin{equation}\label{dch}
\begin{array}{rll}
\mathrm{a)}\hskip4pt&\mathrm{trans\-la\-tion\-al\hskip-3pt:\ }&\,|q|
=1\,\mathrm{\ for\ each\ }\,\gamma\in\Gm,\\
\mathrm{b)}\hskip4pt&\mathrm{di\-la\-tion\-al\hskip-3pt:\
}&\,|q|\ne1\,\mathrm{\ for\ some\ }\,\gamma\in\Gm.
\end{array}
\end{equation}
\begin{lemma}\label{cplfl}With the assumptions and notations as above,
\begin{enumerate}
\item[(a)] the parallel vector field $\,\hna\hn t\,$ on $\,\hm\nh$, spanning
$\,\hdz$, is complete,
\item[(b)] in case\/ \/ {\rm(\ref{dch}-b)}, $\,\phi(t,\,\cdot\,)\notin\Gm\hs$
for all\/ $\,t\in\bbR\smallsetminus\{0\}$,
\end{enumerate}
$\bbR\ni t\mapsto\phi(t,\,\cdot\,)\in\mathrm{Diff}\hs\hm\,$ being the flow
of\/ $\hna\hn t$.
\end{lemma}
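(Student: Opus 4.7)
The lemma consists of two parts, which I would address in the order stated, since the flow $\phi(t,\,\cdot\,)$ of $w=\hna\hn t$ referenced in part (b) presupposes the completeness established in part (a).

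For (a), the vector field $w=\hna\hn t$ is parallel, so its integral curves are geodesics; moreover, since $\hdz$ is null, $\hg(\hna\hn t,\hna\hn t)=0$, whence $dt(w)=0$, meaning that each such integral curve is contained in a single level set of $t$ --- which by (\ref{ker}) is a connected leaf of $\hdp$. I would establish completeness by contradiction: given a maximal integral curve $\sigma:(a,b)\to\hm$ with $b<\infty$, project to the compact manifold $M$, choose $s_n\to b^-$ so that $\pi(\sigma(s_n))$ converges in $M$, lift back via suitable deck transformations $\gamma_n\in\Gm$, and use the reparametrization relation $(\gamma_n\circ\sigma)'(s)=q_n^{-1}w_{\gamma_n\sigma(s)}$ coming from (\ref{dgn}) to extend $\sigma$ past $b$ by means of a local integral curve of $w$ near the limit point, contradicting maximality. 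The delicate step is to control the scaling factors $q_n$ along the sequence, appealing to the Roter local normal form from \cite{derdzinski-roter-09} or to structural results of \cite{derdzinski-terek-tc} in order to rule out escape of $\sigma$ to infinity within a fixed level set of $t$.

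For (b), suppose to the contrary that $\phi(t_0,\,\cdot\,)\in\Gm$ for some $t_0\ne 0$. By (\ref{dch}-b) there exists $\gamma\in\Gm$ whose $q$ from (\ref{hom}-b) satisfies $|q|\ne 1$, and then (\ref{dgn}) yields $d\gamma(w)=q^{-1}w$, so that $\gamma$ pushes $w$ forward onto $q^{-1}w$ with $q^{-1}\notin\{1,-1\}$. Together with the completeness of $w$ from part (a), the nontriviality of $w$, and the fact that $\Gm$ contains both $\gamma$ and $\phi(t_0,\,\cdot\,)$, this puts us precisely in the setting of Lemma~\ref{flows} (with its $q$ replaced by $q^{-1}$), concluding that $\Gm$ cannot act properly discontinuously on $\hm$. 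But $\Gm$ is a group of deck transformations of the covering $\hm\to M$, and hence does act properly discontinuously --- a contradiction.

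I expect the main obstacle to lie in the step of part (a) that rules out escape of an integral curve of $w$ to infinity within a single level set of $t$. Since null geodesics on compact pseudo-Riemannian manifolds need not be complete, this cannot rest on a pure compactness argument and must genuinely use the special structure of rank-one ECS manifolds; part (b), by contrast, then falls out as a clean application of the already-developed machinery.
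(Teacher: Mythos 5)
Your part (b) is exactly the paper's argument: the paper disposes of (b) by saying it ``follows from (a) and Lemma~\ref{flows} combined with (\ref{dgn})'', and your write-up correctly supplies the details (the element $\gamma$ with $|q|\ne1$ pushes $\hna\hn t$ forward onto $q\nh^{-\nnh1}\hna\hn t$, so Lemma~\ref{flows} applies and contradicts proper discontinuity of the deck action). No issues there.

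Part (a) is where the two treatments diverge, and where your proposal has a genuine gap. The paper does not prove (a) at all: it cites it verbatim as ``the second italicized conclusion in Sect.\ 11'' of the companion paper \cite{derdzinski-terek-tc}, where the completeness of $\hna\hn t$ is established using the detailed structure theory developed there. You instead attempt a direct proof, and the attempt stalls at precisely the point you flag yourself. The naive compactness argument fails because $\hna\hn t$ does not descend to $M$: by (\ref{dgn}) each deck transformation $\gamma_n$ rescales it by $q_n\nh^{-\nnh1}$, so the translated curves $\gamma_n\circ\sigma$ are integral curves of $q_n\nh^{-\nnh1}\hna\hn t$ rather than of $\hna\hn t$, and if the factors $q_n$ degenerate the local-extension step gives no uniform lower bound on the time for which the flow exists near the limit point. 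Your proposal acknowledges this (``the delicate step is to control the scaling factors $q_n$'') but resolves it only by an appeal to unspecified ``structural results'' of \cite{derdzinski-roter-09} or \cite{derdzinski-terek-tc}. That is not a proof; it is a restatement of the difficulty. Since, as you correctly note, null geodesics on compact pseu\-do-Riem\-ann\-i\-an manifolds need not be complete, the missing step is the entire content of (a), and within the four corners of this paper the only available route is the citation the authors actually use. Your proof of (b) stands, but (a) should either be cited as the paper does or be supplied with the actual argument from \cite{derdzinski-terek-tc}.
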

In fact, (a) appears in
\cite[the second italicized conclusion in Sect.\ 11]{derdzinski-terek-tc},
while (b) follows from (a) and Lemma~\ref{flows} combined with (\ref{dgn}).

The remainder of this section uses the assumptions preceding (\ref{dsp})
along with
\begin{equation}\label{isp}
\mathrm{transversal\ orientability\ of\ }\,\dzp\nnh\mathrm{\ which,\ by\ 
(\ref{ker}),\ reads\ }\,\Gm\,\subseteq\,\mathrm{Iso}\nh^+\nh(\hm\nh,\hg)\hh,
\end{equation}
for the normal sub\-group $\,\mathrm{Iso}\nh^+\nh(\hm\nh,\hg)\,$ forming the
(\ref{hom}-b)-pre\-im\-age of $\,(0,\infty)$. This can always be achieved by
replacing $\,(M\nh,\g)\,$ (or, $\,\Gm$) with a two-fold isometric covering
(or, an in\-dex-two sub\-group), and has an obvious consequence: the 
translational case then  means precisely that the holonomy group is trivial.
\begin{theorem}\label{dilat}In the di\-la\-tion\-al case\/ {\rm(\ref{dch}-b)},
with\/ {\rm(\ref{isp})}, the image of\/ $\,\Gm\hs$ under\/ {\rm(\ref{hom}-a)} 
consists of di\-la\-tions with a single center. The replacement of\/ $\,t\,$
in\/ {\rm(\ref{dsp})} by a suitable af\-fine function of\/ $\,t\,$ then makes 
this center appear as\/ $\,t=0$, the interval\/ $\,I\hh$ as\/ $\,(0,\infty)$,
and all\/ $\,(q,p)\,$ in the\/ {\rm(\ref{hom}-a)}-im\-age of\/
$\,\Gm\hs$ as having\/ $\,p=0$.

Then the image of\/ $\,\Gm\hs$ under\/ {\rm(\ref{hom}-a)}, always an infinite
multiplicative sub\-group of\/ $\,(0,\infty)$, must be infinite cyclic unless\/
$\,(\hm\nh,\hg)\,$ is locally homogeneous.

On the other hand, {\rm(\ref{dch}-b)} follows if one assumes local homogeneity
of\/ $\,(\hm\nh,\hg)$.
\end{theorem}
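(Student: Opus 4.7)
The plan is to treat the three assertions of Theorem~\ref{dilat} in order, leveraging (\ref{hom}-a,b), the $\aff^+\nh(\bbR)$-facts (\ref{ntr})--(\ref{abs}), Lemmas~\ref{image}, \ref{cplfl}, \ref{flows}, and the structural description of rank-one ECS metrics from \cite{derdzinski-roter-09}.

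For the first assertion, I combine (\ref{dch}-b) with (\ref{isp}) to pick $\gamma_0\in\Gm$ whose image $(q_0,p_0)$ under (\ref{hom}-a) is a positive dilation with $q_0\ne1$ and unique fixed point $c=p_0/(1-q_0)$. The key claim is that every element of the image $H$ of $\Gm$ also fixes $c$. Assuming the contrary, a direct $\aff^+\nh(\bbR)$-computation with $(q_0,p_0)$ and its conjugate by a $c$-moving element of $H$ exhibits a nontrivial translation $(1,r_0)\in H$; iterated conjugation by $(q_0,p_0)$ then yields the infinite family $(1,q_0^n r_0)_{n\in\bbZ}\subseteq H$, whose translation parts accumulate at $0$. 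The corresponding distinct deck transformations $\gamma_n\in\Gm$ should then be shown to violate proper discontinuity of the $\Gm$-action on $\hm$, via the completeness of $\hna\hn t$ (Lemma~\ref{cplfl}(a)) combined with an analysis in the spirit of Lemma~\ref{flows}. Once every element of $H$ fixes $c$, the affine substitution $t\mapsto t-c$ allowed by (\ref{dsp}) moves $c$ to $0$, each $(q,p)\in H$ reduces to $(q,0)$, and the connected, scaling-invariant interval $I-c$ must be one of $(0,\infty)$, $(-\infty,0)$, or $\bbR$. A further $t\mapsto -t$ handles the middle option. Excluding $I=\bbR$ is the most delicate remaining point: the invariant level $t=0$ would produce a separating $\pi(L_0)\subseteq M$ splitting $M$ into two open $\Gm$-invariant halves, which I would rule out using the dilational normal form and proper discontinuity of the flow of $\hna\hn t$.

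For the second assertion, Step~1 places $H$ inside $(0,\infty)$; taking $\log$ identifies $H$ with an infinite additive subgroup of $\bbR$, which is either infinite cyclic or dense. Assuming density, I form the closure $\overline{\Gm}$ of $\Gm$ inside the Lie group $\mathrm{Iso}\hs(\hm,\hg)$. Continuity of (\ref{hom}-b), extended to $\overline{\Gm}$, together with density of $H$, forces the image of $\overline{\Gm}$ under (\ref{hom}-b) to be all of $(0,\infty)$, producing a one-parameter subgroup of isometries whose infinitesimal generator is a Killing field $\zeta$ with $dt(\zeta)=t>0$, hence transversal to $\hdp$. Pairing $\zeta$ with local Killing fields tangent to the leaves of $\hdp$ --- available from the Brink\-mann-type normal form for rank-one ECS metrics --- assembles enough infinitesimal isometries for $\mathrm{Iso}\hs(\hm,\hg)$ to act locally transitively, so that $(\hm,\hg)$ is locally homogeneous. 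The principal obstacle is the explicit verification that $\zeta$ together with the leaf-tangential Killing fields spans every tangent space.

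For the third assertion I argue by contrapositive: if $\Gm$ is translational, then by (\ref{isp}) every $(q,p)\in H$ has $q=1$, so $\hna\hn t$ is $\Gm$-invariant and descends to $M$ as a globally defined parallel null vector field. Using the rank-one ECS normal form in the translational regime, I plan to isolate a scalar invariant of $(\hm,\hg)$ --- built from the curvature tensor and its covariant derivatives --- that depends nontrivially on $t$; its descent to $M$ then contradicts local homogeneity. The principal difficulty of this step is extracting the $t$-dependent curvature invariant directly from the translational normal form; once the invariant is produced, the conclusion is immediate.
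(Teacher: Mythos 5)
There are genuine gaps in all three parts, and the central one is in the first. The paper's proof of the first assertion hinges on a tool you never invoke: by \cite[Sect.~11]{derdzinski-terek-tc}, transversal orientability (\ref{isp}) yields a \emph{closed, $\Gm$-invariant} one-form $\psi\,dt$ with $\psi=\chi\circ t>0$; an antiderivative of $\chi$ conjugates the $\Gm$-action on $I$ to an action by translations, which forces the (\ref{hom}-a)-image to be Abelian, and then (\ref{abs}) immediately gives a single center. Your substitute — producing a nontrivial translation $(1,r_0)$ by commutators and then translations $(1,q_0^nr_0)$ accumulating at the identity of $\aff(\bbR)$ — does not yield the contradiction you want: an element $\gamma_n\in\Gm$ whose affine part is a tiny translation need not be close to the identity in $\mathrm{Iso}\hs(\hm\nh,\hg)$, since it can act arbitrarily wildly along the leaves of $\hdp$; the orbit points $\gamma_n(x)$ merely lie in $t$-levels approaching that of $x$, which is compatible with proper discontinuity. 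Lemma~\ref{flows} does not apply either, as it requires an actual flow element of $\hna\hn t$ to belong to $\Gm$, which Lemma~\ref{cplfl}(b) in fact rules out. Your exclusion of $I=\bbR$ is likewise only a sketch; the paper does this with the function $f$ satisfying $f\circ\gamma=q^{-2}f$ (again from \cite{derdzinski-terek-tc}), viewing $f$ as a function of $t$ with $f(t)=q^2f(qt)$ and letting $q\to0$ through the infinite holonomy group to force $f\equiv0$ if $0\in I$.

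The second and third parts also replace the paper's short computations with programs that are not carried out. For the second, your claim that density of the (\ref{hom}-b)-image forces the closure $\overline{\Gm}$ to surject onto $(0,\infty)$ is unjustified: $\overline{\Gm}$ is a Lie subgroup with countably many components, and if its identity component maps to $\{1\}$ the image stays countable (dense but proper), so no one-parameter subgroup is produced without further argument; and the subsequent ``span every tangent space'' step is exactly the hard part. The paper instead passes density of $q$ through the identity $f(t)=q^2f(qt)$ to get $f(q)=f(1)/q^2$, i.e.\ linearity of $|f|^{-1/2}$, which is \emph{equivalent} to local homogeneity by \cite[Theorem~6.3]{derdzinski-terek-tc}. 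For the third, the $t$-dependent curvature invariant you hope to extract is precisely the missing content; the paper again uses $f$: local homogeneity forces $f(t)=f(1)/t^2$, unbounded, whereas the translational case (\ref{dch}-a) with (\ref{isp}) makes $f$ $\Gm$-invariant and hence bounded on the compact quotient. In short, the single function $f$ and the invariant one-form $\psi\,dt$ imported from \cite{derdzinski-terek-tc} do all the work in the paper, and without them each of your three arguments has an unfilled essential step.
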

\begin{proof}As shown in
\cite[the beginning of Sect.\ 11]{derdzinski-terek-tc}, (\ref{isp}) implies
the existence of a $\,C^\infty\nnh$ function $\,\psi:\hm\to(0,\infty)\,$ such
that the $\,1$-form $\,\psi\,dt\,$ is $\,\pi$-pro\-ject\-a\-ble onto $\,M\,$
(in other words, $\,\Gm\nh$-in\-var\-i\-ant), and closed. According to
(\ref{ker}), the $\,t$-levels in $\,\hm\,$ are all connected, and so
closedness of $\,\psi\,dt\,$ makes $\,t\,$ {\it globally\/} a function of
$\,t$, with $\,\psi=\chi\circ t\,$ for some $\,C^\infty\nnh$ function
$\,\chi:I\nh\to(0,\infty)$. A fixed anti\-de\-riv\-a\-tive $\,\phi\,$ of
$\,\chi\,$ thus constitutes a strictly increasing $\,C^\infty\nnh$
dif\-feo\-mor\-phism $\,\phi:I\nh\to J\,$ onto some open interval
$\,J\subseteq\bbR$, while $\,\Gm\nh$-in\-var\-i\-ance of
$\,d\hs(\phi\circ t)=\psi\,dt\,$ means that $\,\Gm\hs$ acts on
$\,\phi\circ t\,$ by translations:
$\,\phi\circ t\circ\gamma=\phi\circ t+a\,$ with constants $\,a\in\bbR\,$
depending on $\,\gamma\in\Gm\nh$. The mappings $\,t:\hm\nh\to I\hs$ and
$\,\phi\circ t:\hm\nh\to J\,$ are $\Gm\nh$-equi\-var\-i\-ant relative to
$\,\Gm$ acting on $\,I\hs$ and $\,J\,$ via the homo\-mor\-phisms
(\ref{hom}-a), restricted to $\,\Gm\nh$, and $\,\gamma\mapsto a$. As the 
dif\-feo\-mor\-phism $\,\phi:I\nh\to J\,$ makes the two mappings
equi\-var\-i\-ant\-ly equivalent, the two homo\-mor\-phisms have the same
kernel $\,\Sigma\subseteq\Gm\nh$, leading to an iso\-mor\-phism
$\,(q,p)\mapsto\gamma\Sigma\mapsto a\,$ between the images of the two
homo\-mor\-phisms. The former image must thus be Abel\-i\-an (as that of
$\,\gamma\mapsto a\,$ is a group of translations) and so, due to (\ref{dch}-b) and (\ref{abs}), it 
consists of di\-la\-tions with a single center. An af\-fine substitution
of $\,t\,$ turns this center into $\,0$, and elements of the
(\ref{hom}-a)-im\-age of $\,\Gm$ into pairs $\,(q,p)\,$ with $\,q>0\,$
and $\,p=0$. As a result, for our open interval $\,I\nh$,
\begin{enumerate}
\item[(i)] $0\,$ lies in the closure of $\,I\nh$, but not in $\,I\hs$ itself.
\end{enumerate}
The first claim of (i) is obvious: by (\ref{dch}-b) -- (\ref{isp}), 
for some $\,q\in(0,\infty)\smallsetminus\{1\}$,
\begin{enumerate}
\item[(ii)] $I\,$ is closed under multiplications by powers of $\hs q\hs$
with integer exponents.
\end{enumerate}
To verify the second one, note that, as shown in
\cite[formulae (5.6) -- (5.8)]{derdzinski-terek-tc}, some nonconstant
$\,C^\infty\nnh$ function $\,f:\hm\to\bbR\,$ has
\begin{enumerate}
\item[(iii)] $f\circ\gamma=q^{-\nh2}f\,$ for all $\,\gamma\in\Gm\hs$ and 
$\,(q,p)\in\aff^+\nh(\bbR)\,$ with $\,t\circ\gamma=qt+p$. 
\end{enumerate}
This $\,f\,$ is also globally a function of $\,t\,$ 
\cite[the end of Sect.\ 11]{derdzinski-terek-tc}. Treating $\,f\nh$,
informally, as a function $\,I\nh\to\bbR$, and noting that all 
$\,(q,p)\,$ in the (\ref{hom}-a)-im\-age of $\,\Gm$ now have $\,q>0\,$ and
$\,p=0$, we get $\,f(t)=\hs q^2\nnh f(qt)\,$ for such $\,q$, 
while these $\,q$, due to Lemma~\ref{image}, form an infinite sub\-group of
$\,(0,\infty)$. Thus, 
$\,0\notin I\nh$, or else, fixing any $\,t\,$ in the equality 
$\,f(t)=\hs q^2\nnh f(qt)\,$ and letting
$\,q\to0$, we would get $\,f(t)=0$, 
even though $\,f$ is nonconstant.

By (i) and (ii), $\,I\hs$ equals $\,(0,\infty)\,$ or $\,(-\infty,0)\,$
and, replacing $\,t\,$ with $\,-t\,$ if necessary, we get
$\,I\nh=(0,\infty)$, proving the first assertion of the theorem.

To establish the second one, suppose that the (\ref{hom}-a)-im\-age of 
$\,\Gm\nh$, infinite as a consequence of Lemma~\ref{image}, is not cyclic. This
makes the image dense in $\,(0,\infty)$, so that, from continuity of
$\,f\nh$, our equation $\,f(t)=\hs q^2\nnh f(qt)\,$ holds for {\it all\/} 
$\,t,q\in(0,\infty)$. Setting $\,t=1$, we get $\,f(q)=f(1)/q^2$.
The resulting linearity of the function $\,|f|^{-\nnh1/2}$
amounts -- see \cite[Theorem 6.3]{derdzinski-terek-tc} -- to local
homogeneity of $\,(\hm\nh,\hg)$.

Finally, suppose that $\,(\hm\nh,\hg)\,$ is locally homogeneous. The preceding
lines now yield linearity of
$\,|f|^{-\nnh1/2}\nnh$, that is, $\,f(t)=f(1)/t^2\,$ for all 
$\,t\in(0,\infty)$, and so $\,f$ is unbounded on $\,(0,\infty)$. This 
gives (\ref{dch}-b), since (\ref{dch}-a) would, by (iii), imply 
$\,\Gm\nh$-in\-var\-i\-ance of $\,f\nh$, leading to its boundedness, as 
$\,M\nh=\hm\nnh/\hh\Gm\hs$ is compact.
\end{proof}
\begin{proof}[Proof of Theorem~\ref{genrl}]Due to Lemma~\ref{image} we may,
without loss of generality, assume (\ref{dch}-b) and (\ref{isp}). Our claim
now follows from Theorem~\ref{dilat}.
\end{proof}

\section{The rank-one ECS model manifolds}\label{ro}
\setcounter{equation}{0}
In this section we fix the data $\,f\nh,I\nh,n,\mv\nh,\lr,A\,$ consisting of
\begin{equation}\label{dta}
\begin{array}{l}
\mathrm{an\ integer\ }\,\,n\,\ge\,4\mathrm{,\ a\ real\ vector\ space\
}\,\,\mv\hh\mathrm{\ of\ dimension\ }\,\,n\hs-\hs2,\\
\mathrm{a\ pseu\-do\hs}\hyp\mathrm{Euclid\-e\-an\nh\
inner\ product\ }\nh\lr\hs\mathrm{\ on\ }\hs\mv\hskip-3pt\mathrm{,\nh\ 
a\ nonzero,\ trace}\hyp\\
\mathrm{less,\ }\hn\lr\hyp\mathrm{self}\hyp\mathrm{ad\-joint\ linear\ 
en\-do\-mor\-phism\ }\hs A\hs\mathrm{\ of\ }\,\hs\mv\nnh\mathrm{,and\ a\
non}\hyp\\
\mathrm{constant\ }\,\,C^\infty\hs\mathrm{\ function\ 
}\,\,f:I\to\hs\bbR\,\,\mathrm{\ on\ an\ open\ interval\ 
}\,\,I\subseteq\hn\bbR.
\end{array}
\end{equation}
Treating $\,\lr\,$ as a flat (constant) metric on $\,\mv\nh$, and following 
\cite{roter}, we define the simply connected $\,n$-di\-men\-sion\-al
pseu\-do\hs-Riem\-ann\-i\-an manifold
\begin{equation}\label{met}
(\hm\nh,\hg)\,
=\,(I\nh\times\bbR\times\mv\nh,\,\kappa\,dt^2\nh+\,dt\,ds\hs+\hs\lr)\hh,
\end{equation}
where $\,t,s\,$ are the Cartesian coordinates on $\,I\nh\times\bbR$, we
identify $\,dt,\,ds\,$ and $\,\lr$ with their pull\-backs to
$\,\hm\nh$, and the function $\,\kappa:\hm\nh\to\bbR\,$ is defined by 
$\,\kappa(t,s,v)=f(t)\hskip.4pt\langle v,v\rangle+\langle Av,v\rangle$. Thus,
translations in the $\hs s\hs$ direction are isometries of
$\,(\hm\nh,\hg)$.

It is well known \cite[Theorem~4.1]{derdzinski-roter-09} that (\ref{met}) is a
rank-one ECS manifold. To describe its isometry group, we need two
ingredients. The first is
\begin{equation}\label{frs}
\begin{array}{l}
\mathrm{the\ subgroup\ }\hs\mathrm{S}\hs\mathrm{\ of
}\,\aff(\bbR)\times\mathrm{O}\hh(\mv)\,\mathrm{\ formed\ by}\\
\mathrm{triples\ }\,(q,p,\bc)\,\mathrm{\ such\ that\ 
}\hs\bc\hskip-1.8ptA\bc^{-\nnh1}\nnh=\hs q^2\hskip-2.3ptA\mathrm{, while}\\
qt+p\in\hn I\hs\mathrm{\ and\ }\hs f(t)=\hs q^2\nnh f(qt+p)\hs\mathrm{\ for\
all\ }\hs t\in I\nh,
\end{array}
\end{equation}
$\mathrm{O}\hh(\mv)\,$ being the group of linear $\,\lr$-isom\-e\-tries
$\,\bc:\mv\to\mv\nh$.

The second ingredient is the $\,2(n-2)$-di\-men\-sion\-al real
\begin{equation}\label{vss}
\begin{array}{l}
\mathrm{vector\ space\ }\,\xe\hs\mathrm{\ of\ all\ solutions\ 
}\,u:I\nh\to\mv\hs\mathrm{\ to\ the\ sec\-ond}\hyp\mathrm{or\-der\ or}\hyp\\
\mathrm{dinary\ differential\ equation\ }\hs\ddot u
\hn=\nh f\nh u+\nnh Au\mathrm{,\nnh\ carrying\ the\ symplectic}\\
\mathrm{form\ }\,\varOmega:\xe\hn\times\hh\xe\nh\to\bbR\,\mathrm{\ 
given\ by\ }\,\,\varOmega(u\nh^+\nnh,u^-)\,
=\,\langle\dot u\nh^+\nnh,u^-\rangle\,-\,\langle u\nh^+\nnh,\dot u^-\rangle\hn.
\end{array}
\end{equation}
Note that $\,q,(q,p),\bc\,$ all depend homo\-mor\-phic\-ally
on the triple $\,\sigma=(q,p,\bc)$, and $\,\mathrm{S}\,$ acts from the left on 
$\,C^\infty\nnh(I\nh,\mv)\,$ via
\begin{equation}\label{acs}
[\hs\sigma\hn u](t)\,=\,\bc u((t-p)/q)\hh,
\end{equation}
while the operator $\,u\mapsto\sigma\hn u\,$ leaves the solution space
$\,\xe\hs$ invariant.
\begin{theorem}\label{isogp}For\/ $\,(\hm\nh,\hg)\,$ and\/
$\,\mathrm{S}\,$ 
as in\/ {\rm(\ref{dta})} -- {\rm(\ref{frs})}, the full isom\-e\-try group 
$\,\mathrm{Iso}\hs(\hm\nh,\hg)\,$ is iso\-mor\-phic to the set\/
$\,\gp=\mathrm{S}\times\bbR\times\xe\nh
\subseteq\aff(\bbR)\times\mathrm{O}\hh(V)\times\bbR\times\xe$ endowed 
with the group operation 
\begin{equation}\label{mlt}
\begin{array}{l}
(q,p,\bc\nh,r,u)(\hat q,\hat p,\hat\bc\nh,\hat r,\hat u)\\
\hskip20pt=\hs(q\hat q,\hs q\hat p+p,\hs 
\bc\hat \bc\nh,\hs-\varOmega(u,(q,p,\bc)\hat u)+r
+\hat r/q,\hs(q,p,\bc)\hat u+u)\hh\\
\mathrm{or,\ in\ the\ notation\ of\ (\ref{vss})\ -\ (\ref{acs}),\ with\ }
\,\sigma=(q,p,\bc),\\
(\sigma,r,u)(\hat\sigma,\hat r,\hat u)
=(\sigma\hat\sigma,\hs\varOmega(\sigma\hat u,u)+r+\hat r/q,
\hs\sigma\hat u+u)\hh.
\end{array}
\end{equation}
The required isomorphism amounts to the following left action on\/ $\,\hm\,$
by the group\/ $\,\gp\,$ with the operation\/ {\rm(\ref{mlt}):}
\begin{equation}\label{act}
\begin{array}{l}
(q,p,\bc\nh,r,u)(t,s,v)\\
=\hs(qt\nh+\nh p,\hh-\langle\dot u(qt\nh+\nh p),\hh 2\hh\bc v
+u(qt\nh+\nh p)\rangle+r+s/q,\hh \bc v+u(qt\nh+\nh p))\hh.
\end{array}
\end{equation}
\end{theorem}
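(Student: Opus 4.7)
\smallskip
\noindent\emph{Proof plan.}\enspace The strategy has two phases: (i)~verify that the map (\ref{act}) defines an action of $\gp$ on $\hm$ by isometries whose composition law is (\ref{mlt}); (ii)~show conversely that every $\gamma \in \mathrm{Iso}\hs(\hm, \hg)$ arises in this way from a unique $(\sigma, r, u) \in \gp$. Uniqueness in (ii) will be transparent from (\ref{act}) once existence is settled, by evaluating $\gamma$ at suitable points and varying first $s$, then $v$, then $t$.

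Phase (i) is a direct pull\-back computation. Writing $\gamma(t, s, v) = (t', s', v')$ with $t' = qt + p$ and $v' = \bc v + u(t')$, one has $dt' = q\, dt$ and $dv' = \bc\, dv + q \dot u(t')\, dt$, while $ds'$ is obtained by differentiating the $s$-coordinate formula of (\ref{act}). Substitution reduces $\gamma^* \hg - \hg$ to a $dt^2$-expression that splits by homogeneity in $v$ into three independent vanishing identities: the $v$-quadratic piece amounts to the conjunction $\bc A \bc^{-1} = q^2 A$ and $f(t) = q^2 f(qt + p)$, the $v$-linear piece is equivalent to the ODE $\ddot u = fu + Au$ given the preceding commutation relation, and the $v$-constant piece follows automatically from the same ODE and $\lr$-self-ad\-joint\-ness of $A$. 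The composition law (\ref{mlt}) then drops out of $\gamma \circ \hat\gamma$; the $\varOmega$-correction in the $r$-slot is precisely the antisymmetric combination of cross terms that cannot be absorbed into $r + \hat r/q$.

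Phase (ii) is the substantive step. For an arbitrary isometry $\gamma$, the Olszak distribution of $(\hm, \hg)$ is intrinsic; for this explicit model it is spanned by the parallel null vector field $\partial_s$, with $t$ as associated function, so (\ref{dsp}) and (\ref{dgn}) supply constants $q \ne 0$ and $p$ with $t \circ \gamma = qt + p$, $d\gamma(\partial_s) = q^{-1} \partial_s$, and $qt + p \in I$ for every $t \in I$. The codimen\-sion-one foliation $\hdp = \mathrm{Ker}\, dt$ is preserved leaf\-wise, and on each leaf the induced degenerate bilinear form has null line spanned by $\partial_s$, whose quotient $\hdp / \hdz$ carries a flat parallel metric canonically isomorphic to $(\mv, \lr)$. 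Parallelism forces the linear isometry induced by $\gamma$ on this quotient bundle to be leaf-independent, producing a single $\bc \in \mathrm{O}\hh(\mv)$; the remaining freedom in the $\mv$-coordinate along each leaf is a smooth $t$-dependent translation, so that $\gamma(t, s, v) = (qt + p,\, \tilde s,\, \bc v + u(qt + p))$ for some smooth $u : I \to \mv$ and some $\tilde s = \tilde s(t, s, v)$. Requiring $\gamma^* \hg = \hg$ now determines $\tilde s$: the $ds$-coefficient gives $\partial \tilde s / \partial s = q^{-1}$, the $dv$ cross-terms force $\tilde s$ to match the quadratic expression of (\ref{act}) up to an additive constant $r \in \bbR$, and the residual $dt^2$-identity then splits exactly as in phase (i), placing $(q, p, \bc)$ in $\mathrm{S}$ and $u$ in $\xe$.

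The main obstacle in phase (ii) is the rigidity claim that the $\mv$-component of $\gamma$ is \emph{affine} in $v$ with a $t$-independent $\mathrm{O}\hh(\mv)$-linear part. A naive coordinate argument risks conflating the intrinsic screen bundle $\hdp / \hdz$ with the chosen transverse slice $v \in \mv$. The clean route is to observe that the screen bundle, its parallel flat metric, and the parallel self-ad\-joint endo\-mor\-phism induced on it by $A$ are all intrinsic invariants of $(\hm, \hg)$ preserved by every isometry, reducing the rigidity to a standard uniqueness statement for linear isometries of $(\mv, \lr)$ that commute with the given $A$ up to the scalar $q^2$. Once this is in hand, the rest of phase (ii) is bookkeeping: reading off $(\sigma, r, u)$ from the components of $\gamma$ and verifying that this data lies in $\gp$.
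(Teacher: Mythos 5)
Your proposal is correct in substance but takes a genuinely different route from the paper's. The paper does not prove the theorem from scratch: it invokes Theorem~2 and formula~(22) of \cite{derdzinski-80}, observes that the argument there goes through almost verbatim once one drops the real-analyticity of $\,f\,$ and the hypothesis $\,I=\bbR\,$ (the assumptions that had forced $\,|q|=1$), and then reconciles the action formula obtained that way -- which involves $\,u(t)\,$ for $\,u\,$ in the solution space $\,\xe\nnh_q\w$ of $\,\ddot u=fu+q^2\hskip-2.3ptAu\,$ -- with (\ref{act}) by means of the isomorphism $\,\xe\nnh_q\w\to\xe\,$ sending $\,u\,$ to $\,t\mapsto u(qt+p)$. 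Your two phases are instead self-contained. Phase~(i) checks out: the pullback of $\,\hg\,$ under (\ref{act}) differs from $\,\hg\,$ only in the $\,dt^2$ coefficient, which splits by homogeneity in $\,v\,$ exactly as you describe, and the $\,\varOmega$-term in (\ref{mlt}) is indeed the constant $\,\varOmega(\sigma\hat u,u)\,$ that survives when two maps of the form (\ref{act}) are composed. Phase~(ii) is precisely the part the paper delegates wholesale to \cite{derdzinski-80}; your route through the intrinsic Olszak data ($t\circ\gamma=qt+p\,$ and $\,d\gamma\,$ multiplying the parallel spanning field of $\,\hdz\,$ by $\,q\nh^{-\nnh1}$) together with the flat screen bundle $\,\hdp\nnh/\hdz$, whose parallel trivialization by the coordinate fields forces the $\,\mv$-component of $\,\gamma\,$ to be $\,v\mapsto\bc v+u_0(t)\,$ for a single $\,\bc\in\mathrm{O}\hh(\mv)$, is sound and arguably more illuminating. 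One small correction to your closing paragraph: the affineness in $\,v\,$ with a $\,t$-independent linear part follows from parallelism of the flat screen bundle alone (on the simply connected $\,\hm$); the relation $\,\bc\hskip-1.8ptA\bc^{-\nnh1}=q^2\hskip-2.3ptA\,$ is an \emph{output} of the subsequent $\,dt^2$-identity, not an ingredient of that rigidity, so the ``standard uniqueness statement'' you invoke is not actually what does the work there. The paper's approach buys brevity; yours buys independence from \cite{derdzinski-80}, at the cost of writing out the screen-bundle argument in full.
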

\begin{proof}This is precisely \cite[Theorem 2]{derdzinski-80}, plus 
\cite[p.\ 24, formula (22)]{derdzinski-80} describing the group operation, 
except for 
the fact that \cite{derdzinski-80} assumes real-an\-a\-lyt\-ic\-i\-ty of
$\,f$ along with $\,I\nh=\bbR$, and it is because of these assumptions that
$\,|q|=1\,$ whenever $\,(q,p,\bc)\in\mathrm{S}$, cf.\ (\ref{frs}). 
If one ignores the last conclusion and the assumptions that led to it,
the proof in \cite{derdzinski-80} repeated almost verbatim in our case yields
our assertion. However, the resulting right-hand side in (\ref{act}) is not
ours, but instead reads
\[
(qt\nh+\nh p,\hh-\langle\dot u(t),\hh 2\hh\bc v
+u(t)\rangle+r+s/q,\hh \bc v+u(t))
\]
due to the fact that $\,u$, instead of $\,\xe\nh$, 
now lies in the solution space $\,\xe\nnh_q\w$ of the $\,q$-de\-pend\-ent
equation 
$\,\ddot u=fu+q^2\hskip-2.3ptAu$. We reconcile both versions by observing that
the replacement of $\,u\,$ with $\,t\mapsto u(qt+p)\,$ defines an
iso\-mor\-phism $\,\xe\nnh_q\w\to\xe\nh$.

The notation of \cite{derdzinski-80} differs from ours: our 
$\,q,p,\bc\nh,r,u,t,s,v,\mv\nh,f\nh,\kappa,A,\lr,\varOmega$ correspond to 
$\,\ve,T\nh,H^\lambda_\mu,r,C^\lambda\nnh,x^1\nh,2x^n\nh,
\bbR^{\hskip-.6ptn-2}\nnh,A,\varphi,a_{\lambda\mu}\w,k_{\lambda\mu}\w,
2\hs\omega\,$ in \cite{derdzinski-80}.
\end{proof}
By (\ref{mlt}), $\,\gp\ni\gamma=(\sigma,r,u)\mapsto\sigma\in\mathrm{S}\,$ is
a group homo\-mor\-phism, leading to
\begin{equation}\label{ghm}
\mathrm{the\ normal\ sub\-group\ }\,\,\hp\,
=\,\{(1,0,\mathrm{Id})\}\times\bbR\times\xe\,\hs\mathrm{\ of\ }\,\,\gp\hh.
\end{equation}
The group operation (\ref{mlt}) restricted to $\,\hp\,$ becomes
\begin{enumerate}
\item[(a)] $(1,0,\mathrm{Id},\hat r,\hat u)(1,0,\mathrm{Id},r,u)
=(1,\,0,\,\mathrm{Id},\,\varOmega(u,\hat u)+\hat r+r,\,\hat u+u)$,
\end{enumerate}
and the action (\ref{act}) of $\,\hp\,$ on $\,\hm\,$ is explicitly given by
\begin{enumerate}
\item[(b)] $(1,0,\mathrm{Id},r,u)(t,s,v)
=(t,\,\hh-\hn\langle\dot u(t),\hh 2v+u(t)\rangle+r+s,\,\hh v+u(t))$.
\end{enumerate}
Treating the vector space $\,\xe\hs$ as an Abel\-i\-an group 
we get, from (a), an obvious
\begin{enumerate}
\item[(c)] group homo\-mor\-phism
$\,\hp\ni(1,0,\mathrm{Id},r,u)\mapsto u\in\xe\nh$.
\end{enumerate}
Also, as stated in \cite[formula (5.5)]{derdzinski-terek-tc}, with a suitable
af\-fine substitution,
\begin{enumerate}
\item[(d)] $t\,$ in (\ref{met}) can always be made equal to $\,t\,$ chosen as
in (\ref{dsp}),
\end{enumerate}
so that, in view of (\ref{mlt}) -- (\ref{act}),
\begin{enumerate}
\item[(e)] the homo\-mor\-phism $\,\gp\ni(q,p,\bc\nh,r,u)\mapsto(q,p)\,$
coincides with (\ref{hom}-a).
\end{enumerate}
Furthermore, according to
\cite[the lines following formula (3.6)]{derdzinski-terek-ne},
$\,\hna\hn t\,$ in (\ref{dsp}) equals twice the coordinate
vector field in the $\,s\,$ coordinate direction, and so
\begin{enumerate}
\item[(f)] the flow of $\,\hna\hn t\,$ on $\,\hm\,$ is given by 
$\,\bbR\ni r\mapsto(1,0,\mathrm{Id},2r,0)\in\hp\subseteq\gp$.
\end{enumerate}
In other words, cf.\ (b), the flow acts on $\,\hm\,$ via
$\,(\vp,(t,s,v))\mapsto(t,s+2\vp,v)$. Also,
\begin{enumerate}
\item[(g)] $\sigma\hn^*\nnh\varOmega=q\nh^{-\nnh1}\nnh\varOmega$, as an
  obvious consequence of (\ref{vss}) -- (\ref{acs}).
\end{enumerate}
The sub\-group $\,\hp\,$ (canonically identified
with $\,\bbR\times\xe$) acts both on the product
$\,I\nh\times\bbR\times\xe\nh$, by left $\,\hp$-trans\-la\-tions of 
the $\,\hp\,\approx\,\bbR\times\xe\,$ factor, and on $\,\hm\nh$, via
(b). The following mapping is $\,\hp$-equi\-var\-i\-ant for these two
actions:
\begin{equation}\label{prj}
I\nh\times\bbR\times\xe\ni(t,z,u)\,\mapsto(t,s,v)\,
=\,(t,z\,-\,\langle\dot u(t),u(t)\rangle,\,u(t))\in\hm\nh
=I\nh\times\bbR\times\mv\nh.
\end{equation}
as one easily verifies using (a), (b) and the definition of $\,\varOmega\,$ in 
(\ref{vss}).
\begin{remark}\label{cnjug}It is useful to note that
$\,(\sigma,r,u)\nh^{-\nnh1}\nh
=(\sigma\nh^{-\nnh1}\nh,-qr,-\sigma\nh^{-\nnh1}\nh u)\,$
in $\,\gp$, which yields, for
$\,(\sigma,\hat r,\hat u)=(q,p,\bc,\hat r,\hat u)\in\gp\,$ 
and $\,(1,0,\mathrm{Id},r,u)\in\hp$, the equality
\[
(\sigma,\hat r,\hat u)(1,0,\mathrm{Id},r,u)
(\sigma,\hat r,\hat u)^{-\nnh1}\nnh
=\hs(1,0,\mathrm{Id},\,2\hh\varOmega(\sigma\hn u,\hat u)+r/q,
\,\sigma\hn u)\hh.
\]
\end{remark}
\begin{remark}\label{lagrg}Nondegeneracy of $\,\varOmega\,$ gives 
$\,\dim\lz'\nnh=\dim\hh\xe-\hh\dim\lz\,$ for any vector sub\-space 
$\,\lz\subset\xe\hs$ and its $\,\varOmega$-or\-thog\-o\-nal complement 
$\,\lz'\nh$. Thus, $\,2\hs\dim\lz\le\dim\hh\xe$ whenever $\,\lz\,$\ is
isotropic in the sense that $\,\varOmega(u,u')=0\,$ for all 
$\,u,u'\in\lz$.
\end{remark}
\begin{remark}\label{gnric}We refer to a rank-one ECS model manifold
(\ref{met}) as {\it generic\/} when so is $\,A\,$ in (\ref{dta}), by which we 
mean that $\,A\,$ commutes with only finitely many linear
$\,\lr$-iso\-metries of $\,\mv\nh$. Genericity of $\,A\,$ in (\ref{dta}) is
an intrinsic property of the metric $\,\hg$, rather than just a condition
imposed on the construction (\ref{dta}) -- (\ref{met}): as stated in 
\cite[the paragraph following formula (6.3)]{derdzinski-terek-tc}, the
algebraic type of the pair $\,\lr,A$, up to rescaling of $\,A$, can be
explicitly defined in terms of $\,\hg\,$ and its Weyl tensor.
\end{remark}
\begin{remark}\label{impnl}The relation
$\,\bc\hskip-1.8ptA\bc^{-\nnh1}\nnh=\hs q^2\hskip-2.3ptA\,$ in (\ref{frs}) 
with $\,|q|\ne1\,$ implies nil\-po\-tency of $\,A$, as all complex 
characteristic roots of $\,A\,$ then obviously equal $\,0$.
\end{remark}

\section{Generic self-ad\-joint nil\-po\-tent en\-do\-mor\-phisms}\label{gs}
\setcounter{equation}{0}
Throughout this section $\,\mv\hs$ denotes a real vector space of dimension
$\,m\ge2$.

Given a pseu\-\hbox{do\hs-}Euclid\-e\-an inner product $\,\lr\,$ on
$\,\mv\nh$, we refer to $\,\lr\,$ as {\it sem\-i-neu\-tral\/} if its positive
and negative indices differ by at most one, and -- following the terminology
of Remark~\ref{gnric} -- call a $\,\lr$-self-ad\-joint 
en\-do\-mor\-phism of $\,\mv\hs$ {\it generic\/} when it commutes with
only a finite number of linear $\,\lr$-iso\-metries of $\,\mv\nh$. As we show
below (Remark~\ref{nlpgn}), for $\,\lr$-self-ad\-joint en\-do\-mor\-phisms
$\,A\,$ of $\,\mv\hs$ which are nil\-po\-tent, genericity is equivalent to
having $\,A\nh^{m-1}\nnh\ne0\,$ (while $\,A\nh^m\nh=0$).

Nil\-po\-tent en\-do\-mor\-phisms are relevant to our discussion due to
Remark~\ref{impnl}.

Generally, for any en\-do\-mor\-phism $\,A\,$ of our vector space $\,\mv\hs$
and any integer $\,j\ge1$, the inclusions
$\,\mathrm{Ker}\hskip1.7ptA\hn^{j-1}\subseteq\mathrm{Ker}\hskip1.7ptA\hn^j$
allow us to define the quotient spaces
$\,\mathrm{Ker}\hskip1.7ptA\hn^j\nnh\nh
/\mathrm{Ker}\hskip1.7ptA\hn^{j-1}\nnh$, and then $\,A\,$ obviously descends
to {\it injective linear operators}
\begin{equation}\label{inj}
A:\mathrm{Ker}\hskip1.7ptA\hn^{j+1}\nnh\nh/\mathrm{Ker}\hskip1.7ptA\hn^j\hs
\to\hs\mathrm{Ker}\hskip1.7ptA\hn^j\nnh\nh/\mathrm{Ker}\hskip1.7ptA\hn^{j-1},
\hskip9ptj=1,\dots,m-1\hh.
\end{equation}
Setting $\,d\hn_j\w=\dim\,[\mathrm{Ker}\hskip1.7ptA\hn^j\nnh\nh
/\mathrm{Ker}\hskip1.7ptA\hn^{j-1}]\,$ we thus have
$\,d\hn_j\w\ge d\hn_{j+1}\w$ and, if $\,A\,$ is nil\-po\-tent,
\begin{equation}\label{dog}
d_1\w\ge\ldots\ge d_m\w\ge\,0\hskip6pt\mathrm{and\ }\,\dim\mv\nh
=\,d_1\w+\ldots+d_m\w\hh,
\end{equation}
while, whenever $\,j=0,\dots,m$,
\begin{equation}\label{dmk}
\dim\,\mathrm{Ker}\hskip1.7ptA\hn^j\nh=d_1\w+\ldots+d\hn_j\w\hh,\hskip12pt
\mathrm{rank}\hskip1.7ptA\hn^j\nh=\,d\hn_{j+1}\w+\ldots+d_m\w\hh.
\end{equation}
Thus, $\,d_m\w\ge1\,$ in the case where $\,A\,$ is nil\-po\-tent and
$\,A\nh^{m-1}\nnh\ne0$, and then, by (\ref{dog}),
\begin{equation}\label{deo}
d_1\w=\ldots=\hs d_m\w=1\hskip6pt\mathrm{and\ (\ref{inj})\ is\ an\
isomorphism\ for\ }\,j=1,\dots,m-1\hh.
\end{equation}
\begin{theorem}\label{gnnlp}Let a\/ $\,\lr$-self-ad\-joint nil\-po\-tent
en\-do\-mor\-phism\/ $\,A\,$ of an\/ $\,m$-di\-men\-sion\-al
pseu\-\hbox{do\hs-}Euclid\-e\-an vector 
space\/ $\,\mv\hs$ have\/ $\,A\nh^{m-1}\nnh\ne0$. Then the inner product\/ 
$\,\lr\,$ is\/ sem\-i-neu\-tral and there 
exist exactly two bases\/ $\,e\nh_1\w,\dots,e\nh_m\w$ of\/ $\,\mv\nnh$,
differing by an overall sign change, as well as a unique sign factor\/ 
$\,\ve=\pm1$, such that\/ $\,Ae\nnh_j\w=e\nh_{j-1}\w$ and\/ 
$\,\langle e\nh_i\w,e\nh_k\w\rangle=\ve\hn\delta\nh_{i\hn j}\w$ for all\/ 
$\,i,j\in\{1,\dots,m\}$, where\/ $\,e\nh_0\w=0\,$ and\/ $\,k=m+1-j$.
Equivalently, the matrix representing $\,A\,$ or, respectively, $\,\lr\,$ in
our basis has zero entries except those immediately above main diagonal, all
equal to\/ $\,1\,$ or, respectively, except those on the main
anti\-di\-ag\-o\-nal, all equal to\/ $\,\ve$.

Conversely, if\/ $\,A\,$ and\/ $\,\lr\,$ are of the above form in some basis\/
$\,e\nh_1\w,\dots,e\nh_m\w$ of\/ $\,\mv\nnh$, then\/ $\,A\,$ is\/
$\,\lr$-self-ad\-joint, nil\-po\-tent and\/ $\,A\nh^{m-1}\nnh\ne0$.
\end{theorem}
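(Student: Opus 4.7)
My plan is to exploit the rigidity imposed by self-ad\-joint\-ness and the Jor\-dan structure of $A$. Since $A$ is nil\-po\-tent on an $m$-di\-men\-sion\-al space with $A^{m-1}\ne 0$, we have $A^m=0$ and $A$ consists of a single Jor\-dan block; so pick any $e_m\in\mv$ with $A^{m-1}e_m\ne 0$ and set $e_j=A^{m-j}e_m$ for $j=1,\dots,m$, which yields a basis of $\mv$. Self-ad\-joint\-ness then gives $\langle e_i,e_j\rangle=\langle A^{m-i}e_m,A^{m-j}e_m\rangle=\langle e_m,A^{2m-i-j}e_m\rangle$, so that $b_{ij}:=\langle e_i,e_j\rangle$ depends only on $s=i+j$; write $b_{ij}=c_s$. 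For $s\le m$ the exponent $2m-s\ge m$ combined with $A^m=0$ forces $c_s=0$, so the Gram matrix is upper anti-tri\-an\-gu\-lar with determinant $\pm c_{m+1}^{\,m}$, and non\-de\-gen\-er\-a\-cy of $\lr$ forces $c_{m+1}\ne 0$. Rescaling $e_m$ by $|c_{m+1}|^{-1/2}$ normalizes $c_{m+1}=\ve$, where $\ve:=\mathrm{sgn}\,c_{m+1}$ is readily seen to be independent of the choice of $e_m$ (replacing $e_m$ by $\lambda e_m+w$ with $w\in\mathrm{Ker}\,A^{m-1}$ multiplies $c_{m+1}$ by $\lambda^2$).

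To annihilate the remaining upper anti-di\-ag\-o\-nal values $c_{m+2},\dots,c_{2m}$, the next step is to replace $e_m$ by $\tilde e_m:=p(A)e_m$ for a polynomial $p(x)=1+p_1x+\dots+p_{m-1}x^{m-1}$; the modified vectors $\tilde e_j:=A^{m-j}\tilde e_m=p(A)e_j$ still satisfy $A\tilde e_j=\tilde e_{j-1}$, and, writing $p(x)^2=\sum_k q_k x^k$, self-ad\-joint\-ness of $p(A)$ gives $\tilde b_{ij}=\langle e_i,p(A)^2 e_j\rangle=\sum_k q_k c_{i+j-k}$, so once more $\tilde b_{ij}$ depends only on $s=i+j$ via $\tilde c_s=\sum_k q_k c_{s-k}$. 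Using $c_{s-k}=0$ for $s-k\le m$, the requirements $\tilde c_s=0$ for $s=m+2,\dots,2m$ become the triangular recursion $q_r\ve=-\sum_{j=1}^{r}q_{r-j}c_{m+1+j}$ (with $r=s-m-1$) that successively determines $q_1,\dots,q_{m-1}$; since $q_r$ equals $2p_r$ plus a polynomial in $p_1,\dots,p_{r-1}$, this determines $p_1,\dots,p_{m-1}$ uniquely, while $q_0=1$ ensures $\tilde c_{m+1}=\ve$.

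For uniqueness, any second basis with the required properties must arise as $\tilde e_m=p(A)e_m$ with $p(0)\ne 0$, because any operator commuting with a single Jor\-dan block lies in $\bbR[A]$; applying the same recursion with $c_{m+1+j}=0$ for $j\ge 1$ forces $q_1=\dots=q_{m-1}=0$ and $q_0=p(0)^2=1$, whence $p(x)=\pm 1$, accounting for the two-fold ambiguity. Semi-neu\-tral\-i\-ty follows directly from the anti-di\-ag\-o\-nal form of $\lr$ in our basis: the hyperbolic pairs $\mathrm{span}(e_j,e_{m+1-j})$ with $j<(m+1)/2$ each contribute signature $(1,1)$, and for odd $m$ the middle vector $e_{(m+1)/2}$ contributes one eigenvalue of sign $\ve$, yielding signature $(m/2,m/2)$ or $((m\pm 1)/2,(m\mp 1)/2)$. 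The converse is a direct verification: $Ae_j=e_{j-1}$ forces $A^m=0$ and $A^{m-1}e_m=e_1\ne 0$, while $\langle Ae_i,e_j\rangle=\ve\,\delta_{i+j,m+2}=\langle e_i,Ae_j\rangle$ makes $A$ self-ad\-joint. The main obstacle is purely bookkeeping for the recursion, which is painless because the leading unknown $q_r$ in each equation appears with the nonzero coefficient $\ve$.
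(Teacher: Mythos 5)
Your proof is correct, and it reaches the normal form by a genuinely different mechanism from the paper's. The two arguments share the same opening moves: $A$ is a single Jordan block, so a cyclic vector $e_m$ yields a basis $e_j=A^{m-j}e_m$ whose Gram entries depend only on $i+j$ and vanish for $i+j\le m$, nondegeneracy forces $c_{m+1}\ne0$, and the sign $\varepsilon$ is canonical (your $\lambda^2$ computation is the counterpart of the paper's observation that the vectors with $\langle A^{m-1}v,v\rangle=\varepsilon$ form a pair of opposite cosets of $A(V)$). The divergence is in how the remaining conditions $c_{m+2}=\dots=c_{2m}=0$ are achieved and shown to determine $e_m$ up to sign: the paper runs an induction over a nested chain of cosets of the subspaces $A^j(V)$, invoking at each stage its Lemma 1.1 on the unique null vector in a coset of a null line, whereas you parametrize all admissible cyclic vectors as $p(A)e_m$ and reduce both existence and uniqueness to a triangular recursion for the coefficients of $p(x)^2$ modulo $x^m$, solvable precisely because the leading unknown enters each equation with coefficient $\varepsilon\ne0$. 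Your route is more computational and self-contained --- it bypasses the coset/null-cone machinery, and uniqueness drops out of the same recursion rather than a separate inductive uniqueness claim --- while the paper's route is coordinate-free and produces the distinguished coset chain directly. Two minor points deserve a sentence each: the vectors $\tilde e_j=p(A)e_j$ form a basis because $p(A)=\mathrm{Id}+(\text{nilpotent})$ is invertible, and in the uniqueness step the vanishing of $q_1,\dots,q_{m-1}$ yields $p_1=\dots=p_{m-1}=0$ only after the short induction based on $q_r=2p_0p_r+(\text{terms in }p_1,\dots,p_{r-1})$; both are routine but should be stated.
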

\begin{proof}For $\,j=0,\dots,m$, the symmetric bi\-lin\-ear form
$\,(v,v')\mapsto\langle A\hn^j\nh v,v'\rangle\,$ on $\,\mv\nnh$, briefly denoted by 
$\,\langle A\hn^j\hn\cdot\hs,\hn\cdot\hh\rangle$, and the sub\-spaces
$\,\mv\hskip-3.6pt_j\w=A\hn^j\nh(V)\subseteq\mv\nnh$,
\begin{enumerate}
\item[(a)] $\dim\mv\hskip-3.6pt_j\w=m-j\,$ and
$\,\mv\hskip-3.6pt_j\w\subseteq\mv\hskip-3.6pt_{j-1}\w$ if $\,j\ge1$,
\item[(b)] $\langle A\hn^{m-j}\hn\cdot\hs,\hn\cdot\hh\rangle\,$ descends to
the $\,j$-di\-men\-sion\-al quotient space
$\,\mv\hskip-2.4pt/\mv\hskip-3.6pt_j\w$,
\end{enumerate}
(a) being obvious from (\ref{dmk}) -- (\ref{deo}), and (b) from
\begin{enumerate}
\item [(c)] self-ad\-joint\-ness of $\,A\,$ along with the relation
$\,A\nh^m\nh=0$.
\end{enumerate}
As $\,A\nh^{m-1}\nnh\ne0$, the form resulting from (b) on the line 
$\,\mv\hskip-2.4pt/\mv\hskip-3.5pt_1\w$ is nonzero, and hence positive or
negative definite, which proves the existence and uniqueness of a sign factor 
$\,\ve\in\{1,-\nnh1\}$ such that
$\,\langle A\hn^{m-1}\nh v,v\rangle=\ve\,$ for some $\,v\in\mv\nnh$. More
precisely, $\,\ve\,$ is the {\it sem\-i\-def\-i\-nite\-ness sign\/} of
$\langle A\hn^{m-1}\hn\cdot\hs,\hn\cdot\hh\rangle$, and
\begin{enumerate}
\item [(d)] vectors with $\,\langle A\hn^{m-1}\nh v,v\rangle=\ve\,$ form a
pair of opposite cosets of $\,\mv\hskip-3.5pt_1\w$ in $\,\mv\nnh$.
\end{enumerate}
We now prove, by induction on $\,j=1,\dots,m$, the existence of 
an ordered $\,j\hh$-tuple $\,(S\hn_1\w,\dots,S\nnh_j\w)
\in\mv\hskip-2.4pt/\mv\hskip-3.5pt_1\w\times\ldots
\times\mv\hskip-2.4pt/\mv\hskip-3.6pt_j\w$ of
cosets such that
$\,S\nnh_j\w\subseteq\ldots\subseteq S\hn_1\w$ while, 
for $\,\ve\,$ in (d) and every $\,v\in S\nnh_j\w$,
\begin{equation}\label{jtu}
\langle A\hn^{m-1}\nh v,v\rangle
=\ve\hh,\hskip5pt\langle A\hn^{m-2}\nh v,v\rangle
=\ldots=\langle A\hn^{m-j}\nh v,v\rangle=0\hh,
\end{equation}
along with uniqueness of $\,(S\hn_1\w,\dots,S\nnh_j\w)\,$ up to its
replacement by $\,(-S\hn_1\w,\dots,-S\nnh_j\w)$. As (d) yields our
claim for $\,j=1$, suppose that it holds for some $\,j-1\ge1$. Since
$\,\mv\hskip-3.6pt_j\w\subseteq\mv\hskip-3.6pt_{j-1}\w\subseteq\ldots
\subseteq\mv\hskip-3.5pt_1\w$, cf.\ (a),
\begin{enumerate}
\item [(e)] the spaces 
$\,\mv\hskip-3.6pt_{j-1}\w,\dots,\mv\hskip-3.5pt_1\w$ project onto sub\-spaces
$\,Q\hn_1\w,\dots,Q\nnh_{j-1}\w$ of dimensions
$\,1,\dots,j-1\,$ in the $\,j$-di\-men\-sion\-al quotient
$\,Q\nnh_j\w=\mv\hskip-2.4pt/\mv\hskip-3.6pt_j\w$,
\end{enumerate}
and $\,Q\hn_1\w\subseteq\ldots\subseteq Q\nnh_{j-1}\w$, 
while the cosets $\,S\nnh_{j-1}\w,\dots,S\hn_1\w$ of 
$\,\mv\hskip-3.6pt_{j-1},\dots,\mv\hskip-3.5pt_1\w$ in $\,\mv\nnh$, assumed to
exist (and be unique up to an overall sign), project onto an ascending chain of
cosets of $\,Q\hn_1\w,\dots,Q\nnh_{j-1}\w$ in $\,Q\nnh_j\w$. Let us fix a
vector $\,v\in S\nnh_{j-1}\w$, denote 
by $\,\hat R\hn_1\w,\dots,\hat R\nh_{j-1}\w$ the latter cosets (of dimensions 
$\,1,\dots,j-1$), and by 
$\,(\hh\cdot\hs,\hn\cdot\hh)\,$ the symmetric bi\-lin\-ear form on
$\,Q\nnh_j\w$ induced by $\,\langle A\hn^{m-j}\hn\cdot\hs,\hn\cdot\hh\rangle\,$ via
(b). Since (\ref{jtu}) is assumed to hold for our $\,v$, with $\,j\,$ replaced
by $\,j-1$, if we set $\,v\hn_i\w=A\hn^{j-i}\nh v$, $\,i=1,\dots,j$, then, 
for all $\,i,k\in\{1,\dots,j\}$, due to (c) and the first equality in this
version of (\ref{jtu}), $\,(v\hn_i\w,v\hn_k\w)=0$ if $\,i+k\le j$ and 
$\,(v\hn_i\w,v\hn_k\w)=\ve\,$ when $\,i+k=j+1$. The $\hs j\times j\hs$
matrix of these $\,(\hh\cdot\hs,\hn\cdot\hh)$-in\-ner products thus has the
entries all equal to $\,\ve\,$ on the main anti\-di\-ag\-o\-nal, and all zero
above it. Due to the resulting nondegeneracy of the matrix and the presence of
the zero entries, $\,v\hn_1\w,\dots,v\nh_j\w$ project onto a basis 
$\,\hat v\hn_1\w,\dots,\hat v\nh_j\w$ of $\,Q\nnh_j\w$, with
$\,\hat v\hn_i\w\in Q\nh_i\w$, $\,i=1,\dots,j$, and 
$\,(\hh\cdot\hs,\hn\cdot\hh)\,$ is a sem\-i-neu\-tral 
pseu\-\hbox{do\hs-}Euclid\-e\-an inner product in $\,Q\nnh_j\w$. 
Thus, $\,\hat v\hn_1\w\in Q\hn_1\w$ is 
$\,(\hh\cdot\hs,\hn\cdot\hh)$-or\-thog\-o\-nal to the basis
$\,\hat v\hn_1\w,\dots,\hat v\nh_{j-1}\w$ of $\,Q\nnh_{j-1}\w$, which makes
$\,Q\nnh_{j-1}\w$ the $\,(\hh\cdot\hs,\hn\cdot\hh)$-or\-thog\-o\-nal
complement of the $\,(\hh\cdot\hs,\hn\cdot\hh)$-null line $\,Q\hn_1\w$.
At the same time, the coset $\,\hat R\hn_1\w$ of $\,Q\hn_1\w$ is not contained
in the $\,(\hh\cdot\hs,\hn\cdot\hh)$-or\-thog\-o\-nal complement
$\,Q\nnh_{j-1}\w$ of $\,Q\hn_1\w$, since 
$\,(v\hn_1\w,v)=(A\hn^{j-1}\nh v,v)=\langle A\hn^{m-1}\nh v,v\rangle
=\ve\ne0\,$ in the $\,j-1\,$ version
of (\ref{jtu}), and so the vector $\,v=v\nh_j\w\in S\nnh_{j-1}\w$,
projecting onto $\,\hat v\nh_j\w\in\hat R\hn_1\w$, is not
$\,(\hh\cdot\hs,\hn\cdot\hh)$-or\-thog\-o\-nal to
$\,\hat v\hn_1\w$ spanning the line $\,Q\hn_1\w$. By Lemma~\ref{point},
$\,\hat R\hn_1\w$ intersects the $\,(\hh\cdot\hs,\hn\cdot\hh)$-null cone at
exactly one point, and so does $\,-\hat R\hn_1\w$. This ``point'' in the
$\,j$-di\-men\-sion\-al quotient
$\,Q\nnh_j\w=\mv\hskip-2.4pt/\mv\hskip-3.6pt_j\w$ is actually a coset
$\,S\nnh_j\w$ of $\,\mv\hskip-3.6pt_j\w$ in $\,\mv$, contained in
$\,S\nnh_{j-1}\w$, and its lying in the $\,(\hh\cdot\hs,\hn\cdot\hh)$-null
cone amounts to $\,\langle A\hn^{m-j}\nh v,v\rangle=0\,$ for all
$\,v\in S\nnh_j\w$, which establishes the inductive step and thus proves 
the existence and uniqueness claim about (\ref{jtu}).

This last claim, for $\,j=m$, yields a unique (up to a sign) coset
$\,S\nh_m\w$ of $\,\mv\hskip-2.4pt_m=\{0\}$, that is, a unique pair
$\,\{v,-v\}\,$ of opposite vectors in $\,\mv\nnh$, with
\begin{equation}\label{amo}
\langle A\hn^{m-1}\nh v,v\rangle
=\ve\hskip7pt\mathrm{and}\hskip7pt\langle A\hn^{i}\nh v,v\rangle
=0\hskip7pt\mathrm{whenever}\hskip7pti\ge0\,\mathrm{\ and\ }\,i\ne m-1\hh,
\end{equation}
the case of $\,i<m-1\,$ being due to (\ref{jtu}) for $\,j=m$, that of
$\,i\ge m\,$ immediate from (c). Note that $\,S\nh_m\w$ uniquely determines
the other cosets $\,S\nnh_j\w$ as
$\,S\nh_m\w\subseteq\ldots\subseteq S\hn_1\w$. Setting
$\,e\nh_i\w=A\hn^{m-i}\nh v$, $\,i=1,\dots,m$, we obtain an $\,m$-tuple of
vectors leading to matrices for $\,A\,$ and $\,\lr\,$ described in the
statement of the theorem, cf.\ (c) and (\ref{amo}). Nondegeneracy of the
latter matrix,
along with the abundance of zero entries in it, establishes both linear
independence of $\,e\nh_1\w,\dots,e\nh_m\w$ and the sem\-i-neu\-tral signature
of $\,\lr$. Uniqueness of $\,\{v,-v\}\,$ clearly implies uniqueness of
$\,e\nh_1\w,\dots,e\nh_m\w$ up to their replacement by 
$\,-e\nh_1\w,\dots,-e\nh_m\w$.

For the converse statement it suffices to note that the basis
$\,e\nh_1\w,\dots,e\nh_m\w$ has the
form $\,A\hn^{m-1}\nh v,A\hn^{m-2}\nh v,\dots,Av,v$, and so
self-ad\-joint\-ness of $\,A\,$ amounts to requiring that the matrix of
$\,\lr\,$ have a single value of the entries in each anti\-di\-ag\-o\-nal.
\end{proof}
\begin{corollary}\label{cmmte}The only linear iso\-metries of a 
pseu\-\hbox{do\hs-}Euclid\-e\-an space of dimension\/ $\,m\,$ commuting with
a given generic nil\-po\-tent self-ad\-joint en\-do\-mor\-phism\/ $\,A\,$ such 
that\/ $\,A\nh^{m-1}\nnh\ne0\,$ are\/ $\,\mathrm{Id}\,$ and\/ $\,-\mathrm{Id}$.
\end{corollary}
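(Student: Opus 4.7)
The plan is to exploit the uniqueness-up-to-sign statement already proved in Theorem~\ref{gnnlp}. Let $\bc$ be a linear $\lr$-isometry of $\mv$ with $\bc A = A\bc$, and recall from the final paragraph of the proof of Theorem~\ref{gnnlp} that there exists a vector $v\in\mv$, unique up to the sign change $v\mapsto -v$, satisfying the conditions (\ref{amo}): namely $\langle A^{m-1}v,v\rangle=\ve$ and $\langle A^{i}v,v\rangle=0$ for every integer $i\ge0$ with $i\ne m-1$. The basis $e_1,\dots,e_m$ furnished by the theorem is recovered from this distinguished $v$ via $e_i=A^{m-i}v$, so that once one controls the image $\bc v$, one controls $\bc$ on the whole basis.

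The key step is to show that $\bc v$ satisfies (\ref{amo}) as well. Since $\bc$ commutes with $A$, it also commutes with every power $A^i$, and being a $\lr$-isometry it preserves each of the bilinear forms $\langle A^i\hn\cdot\hs,\hn\cdot\hh\rangle$: explicitly,
\[
\langle A^{i}(\bc v),\hh \bc v\rangle\,=\,\langle \bc A^{i}v,\hh \bc v\rangle\,=\,\langle A^{i}v,\hh v\rangle
\]
for every $i\ge0$. Thus $\bc v$ again satisfies (\ref{amo}) with the same $\ve$, and the uniqueness clause of Theorem~\ref{gnnlp} forces $\bc v\in\{v,-v\}$.

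Finally, setting $\yt=\pm1$ according to whether $\bc v=\yt v$, the commutation $\bc A=A\bc$ yields
\[
\bc e_i\,=\,\bc\hh A^{m-i}v\,=\,A^{m-i}\hh\bc v\,=\,\yt\hh A^{m-i}v\,=\,\yt\hh e_i
\]
for each $i=1,\dots,m$, so $\bc=\yt\,\mathrm{Id}=\pm\,\mathrm{Id}$. Since both $\mathrm{Id}$ and $-\mathrm{Id}$ obviously commute with $A$ and are linear $\lr$-isometries, the set of such $\bc$ is exactly $\{\mathrm{Id},-\mathrm{Id}\}$. There is no real obstacle here: all of the work has already been done in Theorem~\ref{gnnlp}, and the corollary is essentially a direct reading of its uniqueness statement.
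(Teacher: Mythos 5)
Your proposal is correct and is essentially the paper's own argument: the paper's one-line proof invokes the up-to-a-sign uniqueness of the basis $e_1,\dots,e_m$ from Theorem~\ref{gnnlp} to conclude that a commuting isometry must send that basis to itself or its opposite, and you simply make this explicit by tracking the distinguished vector $v$ of (\ref{amo}) (from which the basis is built via $e_i=A^{m-i}v$) and checking that $\bc v$ again satisfies (\ref{amo}). The details you supply — $\bc$ preserving each form $\langle A^i\cdot,\cdot\rangle$ and commuting with each $A^{m-i}$ — are exactly what the paper leaves implicit, so no gap and no genuinely different route.
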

In fact, due the up-to-a-sign uniqueness of the basis in Theorem~\ref{gnnlp},
such a linear iso\-metry must transform this basis into itself or its
opposite.
\begin{corollary}\label{slfsm}Let\/ a nil\-po\-tent 
self-ad\-joint en\-do\-mor\-phism\/ $\,A\,$ of a pseu\-\hbox{do\hs-}Euclid\-e\-an
space\/ $\,\mv\hs$ have\/ $\,A\nh^{m-1}\nnh\ne0$, where\/ $\,m=\dim\mv\nnh$.
Then, for every\/ $\,q\in(0,\infty)$, there exists a unique pair\/
$\,\{\bc,-\bc\}\,$ of mutually opposite linear iso\-metries of\/ $\,\mv\hs$
with\/ $\,\bc\hskip-1.8ptA\bc^{-\nnh1}\nh=\hs q^2\hskip-2.3ptA$.

Such\/ $\,\bc\,$ is di\-ag\-o\-nal\-iz\-ed by a basis\/
$\,e\nh_1\w,\dots,e\nh_m\w$ chosen as in Theorem\/~{\rm\ref{gnnlp}}, with
the\/ respective eigen\-values\/ $\,q^{m-1}\nh,q^{m-3}\nh,\dots,q^{1-m}\nh$,
or their opposites, so that\/ 
$\,\bc\nh e\nnh_j\w=\pm q^{m+1-2\hn j}e\nnh_j\w\hskip2pt$ for\/
$\,j=1,\dots,m\,$ and some fixed sign\/ $\,\pm\hs$.
\end{corollary}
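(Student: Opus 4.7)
The plan is to construct $C$ explicitly in the canonical basis supplied by Theorem~\ref{gnnlp}, and then deduce uniqueness (and hence the universal diagonal form) from Corollary~\ref{cmmte}. Since $A$ is nilpotent with $A^{m-1}\ne 0$, the hypotheses of Theorem~\ref{gnnlp} are in force, so we may fix a basis $e\nh_1\w,\dots,e\nh_m\w$ in which $Ae\nh_j\w=e\nh_{j-1}\w$ (with $e\nh_0\w=0$) and the Gram matrix of $\lr$ carries $\ve$ on the main antidiagonal and zeros elsewhere, i.e.\ $\langle e\nh_i\w,e\nh_l\w\rangle=\ve\hn\delta\nh_{i,m+1-l}\w$.

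For existence, define $C\in\mathrm{GL}\hh(\mv)\,$ by the diagonal formula $Ce\nh_j\w=q^{m+1-2j}e\nh_j\w$. The isometry condition reduces to the calculation
\[
\langle Ce\nh_i\w,Ce\nh_l\w\rangle
=q^{2(m+1)-2(i+l)}\langle e\nh_i\w,e\nh_l\w\rangle,
\]
whose scalar factor equals $1$ precisely when $l=m+1-i$, which is exactly the locus where $\langle e\nh_i\w,e\nh_l\w\rangle\ne 0$; thus $C$ preserves $\lr$. The conjugation relation then follows entry by entry from $CAe\nh_j\w=Ce\nh_{j-1}\w=q^{m+3-2j}e\nh_{j-1}\w=q^2ACe\nh_j\w$, so that $CAC^{-\nnh1}\nh=\hs q^2\hskip-2.3ptA$. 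Of course $-C\,$ has the same two properties, giving the required pair.

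For uniqueness, suppose $C'\,$ is any linear isometry of $\,\mv\,$ with $C'A(C')^{-\nnh1}\nh=\hs q^2\hskip-2.3ptA$. Then $C^{-\nnh1}C'\,$ is an isometry and commutes with $A$, since conjugation by $C^{-\nnh1}C'$ multiplies $A$ by $q^{-\nh2}\cdot q^{\hh2}\nh=\nh1$. Because $A$ is nilpotent with $A^{m-1}\ne 0$, Corollary~\ref{cmmte} forces $C^{-\nnh1}C'\in\{\mathrm{Id},-\mathrm{Id}\}$, hence $C'=\pm C$. This simultaneously yields uniqueness of the pair $\{C,-C\}\,$ and shows that every such isometry has the claimed eigenvalues $q^{m-1},q^{m-3},\dots,q^{1-m}$ on the canonical basis, with a single uniform sign. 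I anticipate no real obstacle: the hard work has been done by Theorem~\ref{gnnlp} and Corollary~\ref{cmmte}, and the proof reduces to the two short verifications above. The only point requiring care is that the sign in $Ce\nh_j\w=\pm q^{m+1-2j}e\nh_j\w$ is uniform in $j$, which is precisely what the uniqueness step (as opposed to mere existence) delivers.
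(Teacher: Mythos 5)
Your proof is correct and follows essentially the same route as the paper: define $C$ diagonally by $Ce_j=q^{m+1-2j}e_j$ in the canonical basis of Theorem~2.1, verify the isometry and conjugation properties by direct computation, and obtain uniqueness (hence the uniform sign) from Corollary~2.2 applied to $C^{-1}C'$.
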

\begin{proof}
Uniqueness is immediate from Corollary~\ref{cmmte} since two such linear 
iso\-metries differ, com\-po\-si\-tion-wise, by one commuting with $\,A$.
Existence: defining the linear auto\-morphism $\,\bc\,$ by
$\,\bc\nh e\nh_j\w=\widetilde e\nh_j\w$, for
$\,\widetilde e\nh_j\w=q^{m+1-2\hn j}e\nh_j\w$,
we get the inner products
$\,\langle\widetilde e\nh_i\w,\widetilde e\nh_k\w\rangle
=\ve\hs\delta\nh_{i\hn j}\w$, and 
$\,q^2\hskip-2.3ptA\widetilde e\nh_j\w=\widetilde e\nh_{j-1}\w$,
for all $\,i,j\in\{1,\dots,m\}$, with $\,k=m+1-j\,$ and
$\,\widetilde e\nh_0\w=0$,
as required.
\end{proof}
\begin{remark}\label{nlpgn}For a nil\-po\-tent self-ad\-joint
en\-do\-mor\-phism $\,A\,$ of an $\,m$-di\-men\-sion\-al
pseu\-\hbox{do\hs-}Euclid\-e\-an space $\,\mv\nnh$, five
conditions are mutually equivalent:
\begin{enumerate}
\item[(i)] $A\nh^{m-1}\nnh\ne0$.
\item[(ii)] $\mathrm{rank}\hskip1.7ptA=m-1\,$ (in other words,
$\,\dim\,\mathrm{Ker}\hskip1.7ptA=1$).
\item[(iii)] $\pm\mathrm{Id}\,$ are the only linear self-iso\-metries of
$\,\mv\nnh$ commuting with $\,A$.
\item[(iv)] $A\,$ is generic (commutes with only finitely many linear
iso\-metries).
\item[(v)] $0\,$ is the only skew-ad\-joint en\-do\-mor\-phism of $\,\mv\nnh$
commuting with $\,A$.
\end{enumerate}
In fact, (i) yields (ii) due to (\ref{dmk}) -- (\ref{deo}), and the converse
is immediate as (ii) and (\ref{dog}) -- (\ref{dmk}) force all $\,d\hn_j\w$ to
equal $\,1$. The implications (i) $\implies$ (iii) $\implies$ (iv) $\implies$ 
(v) are obvious from Corollary~\ref{cmmte}. Finally, (v) implies (ii) as any
two vectors $\,v,v'\in\mathrm{Ker}\hskip1.7ptA\,$ are 
linearly dependent: the skew-ad\-joint en\-do\-mor\-phism $\,v\wedge v'\nh
=\langle v,\,\cdot\,\rangle\hs v'\nh-\langle v'\nh,\,\cdot\,\rangle\hs v$,
where $\,\lr\,$ is the inner product, commutes with $\,A$.
\end{remark}

\section{Invariant sub\-spaces}\label{is}
\setcounter{equation}{0}
This section uses the following assumptions and notations.

First, we fix $\,q\in(0,\infty)\smallsetminus\{1\}$, an integer $\,m\ge2$,
a generic self-ad\-joint nil\-po\-tent en\-do\-mor\-phism $\,A\,$ of an
$\,m$-di\-men\-sion\-al pseu\-\hbox{do\hs-}Euclid\-e\-an space $\,\mv\hs$ with
the inner product $\,\lr$, and a linear $\,\lr$-iso\-metry $\,\bc\,$ of
$\,\mv\hs$ having positive eigen\-values and satisfying the condition 
$\,\bc\hskip-1.8ptA\bc^{-\nnh1}\nh=\hs q^2\hskip-2.3ptA$.

According to Remark~\ref{nlpgn}, Theorem~\ref{gnnlp} and Corollary~\ref{slfsm},
the algebraic type of the above quadruple $\,(\mv\nh,\lr,A,\bc)\,$ is uniquely 
determined by $\,m,q\,$ and a sign parameter $\,\ve=\pm1$. More precisely,
we may choose a basis $\,e\nh_1\w,\dots,e\nh_m\w$ of $\,\mv\hs$ such that,
for some $\,\ve\in\{1,-\nnh1\}\,$ and all $\,i,j\in\{1,\dots,m\}$, 
with $\,e\nh_0\w=0\,$ and $\,k=m+1-j$,
\begin{equation}\label{bss}
Ae\nnh_j\w=e\nh_{j-1}\w\hh,\hskip20pt
\langle e\nh_i\w,e\nh_k\w\rangle=\ve\hs\delta\nh_{i\hn j}\w\hh,\hskip20pt
\bc\nh e\nnh_j\w=q^{m+1-2\hn j}e\nnh_j\w\hh.
\end{equation}
Let the operator $\,T\,$ act on functions $\,(0,\infty)\ni t\mapsto u(t)$, 
valued anywhere, by
\begin{equation}\label{tro}
[T\nnh u](t)\,=\,u(t/q)\hh.
\end{equation}
We also fix a $\,C^\infty\nnh$ function
\begin{equation}\label{qtf}
f:(0,\infty)\to\bbR\,\mathrm{\ \ with\ \ }\,q^2f(qt)\,
=\,f(t)\,\mathrm{\ whenever\ }\,t\in(0,\infty)\hh,
\end{equation}
and define $\,\wv\nh,\xe\hs$ to be the vector spaces of dimensions
$\,2\,$ and $\,2m\,$ formed by all real-val\-ued (or, $\,\mv\nh$-val\-ued)
functions $\,\y\,$ (or, $\,u$) on $\,(0,\infty)\,$ such that
\begin{equation}\label{ode}
\mathrm{i)}\hskip6pt\ddot\y\,=\,f\y\,\mathrm{\ \ or,\ respectively,\ \
}\mathrm{ii)}\hskip6pt\ddot u=fu+q^2\hskip-2.3ptAu\hh,
\end{equation}
with $\,(\hskip2.3pt)\hskip-2.2pt\dot{\phantom o}\nh=\hs d/dt$. The operator
$\,T\,$ obviously preserves $\,\wv\nh$, and so we may
select a basis $\,\y\nh^+\nnh,\y^-$ of the space of com\-plex-val\-ued 
solutions to (\ref{ode}-i) having
\begin{equation}\label{tye}
T\nnh\y\nh^+=\mu\nh^+\nnh\y\nh^+\mathrm{\ and\ }\,T\nnh\y^-\mathrm{\ equal\
to\ }\,\mu\hn^-\nnh\y^-\mathrm{\ plus\ a\ multiple\ of\ }\,\y\nh^+\nh,
\end{equation}
for some eigen\-values $\,\mu^\pm\nh\in\bbC$, the multiple being zero 
unless $\,\mu\nh^+\nnh=\mu^-\nnh\in\bbR$. Since the formula
$\,\alpha(\y\nh^+\nnh,\y^-)=\dot \y\nh^+ \y^-\nnh-\y\nh^+\dot \y^-\hs$ 
(a constant!) defines an area form on $\,\wv\hs$ such that 
$\,T^*\nh\alpha=q\nh^{-\nnh1}\nnh\alpha$, we have
$\,\det\hs T=q\nh^{-\nnh1}$ in $\,\wv\nh$. Consequently,
\begin{equation}\label{mpq}
\mu\nh^+\nnh\mu^-=\,\,q\nh^{-\nnh1}.
\end{equation}
In general, $\,\xe\hs$ is not preserved by either $\,T\,$ 
or by $\,\bc\,$ applied val\-ue\-wise via $\,u\mapsto\bc u$. Their composition
$\,\bc\hh T=T\hn\bc\,$ however, does leave $\,\xe$ invariant:
\begin{equation}\label{cte}
\bc\hh T:\xe\nh\to\xe,
\end{equation}
as it coincides with the operator $\,u\mapsto\sigma\hn u\,$ in (\ref{acs}).
The solution space $\,\xe\hs$ of (\ref{ode}-ii) has an 
ascending $\,m$-tuple of $\,\bc\hh T\nh$-in\-var\-i\-ant vector sub\-spaces:
\begin{equation}\label{asc}
\xe\nh_1\w\,\subseteq\,\ldots\,\subseteq\,\xe\nh_m\w\hs=\,\xe\,\mathrm{\
with\ }\,\dim\hh\xe\nnh_j\w=\hs2\hn j\hh,
\end{equation}
each $\,\xe\nnh_j\w$ consisting of solutions taking
values in the space $\,\mathrm{Ker}\hskip1.7ptA\hn^j\nh$. (Note that, 
as a consequence of (\ref{dmk}) -- (\ref{deo}),
$\,\dim\,\mathrm{Ker}\hskip1.7ptA\hn^j\nh=j$.)
\begin{theorem}\label{spctr}Given\/
$\,q,m,\mv\nnh,\lr,A,\bc\nh,e\nh_1\w,\dots,e\nh_m\w,T\nnh,f\nnh,\wv\nh,
\xe\nh,\y\nh^\pm\nnh,\mu^\pm\nh$ introduced earlier in this section, let\/
$\,\lz\,$ be any\/ $\,\bc\hh T\nh$-in\-var\-i\-ant sub\-space of\/ $\,\xe\nh$.

Then in some basis\/  
$\,u\nh^+_1\nnh,u^-_1,\dots,u\nh^+_m,u^-_m$ of the complexification\/ 
$\,\xe^\bbC$ of\/ $\,\xe\nh$, containing a basis of $\,\lz^\bbC\nnh$, the 
matrix of\/ $\,\bc\hh T\,$ is upper 
triangular with the diagonal 
\hbox{$(\gy\nh^+_1,\gy^-_1,\dots,\gy\nh^+_m,\gy^-_m)\,$} where, 
for some combination coefficients\/ $\,(0,\infty)\to\bbC$,
\begin{equation}\label{upm}
\gy^\pm_j\nnh=\nh q^{m+1-2\hn j}\mu^\pm\nnh\mathrm{\ and\
}\hs u\nh^\pm_j\nnh\mathrm{\ equals\ }\y\nh^\pm e\nnh_j\w\nh\mathrm{\ 
plus\ a\ combination\ of\ }\,e\nh_1\w,\dots,e\nnh_{j-1}\w,
\end{equation}
$j=1,\dots,m$. If\/ $\,\mu\nh^+\nnh,\mu^-\nnh\in\bbR$, we may replace
\hbox{`\hn com\-plex-val\-ued'} by `real-val\-ued' and the complexifications\/
$\,\bbC,\xe^\bbC,\xe^\bbC_{\nnh j}$ by the original real forms\/
$\,\bbR,\xe\nh,\xe\nnh_j\w$.
\end{theorem}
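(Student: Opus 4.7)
The plan is to exploit the canonical filtration $\xe_1\subseteq\ldots\subseteq\xe_m$ of (\ref{asc}), identify each quotient $\xe_j/\xe_{j-1}$ with $\wv$, and observe that on this quotient $\bc\hh T$ descends to $q^{m+1-2j}\hs T$. This reduces the triangularization of $\bc\hh T$ on $\xe^\bbC$ to the known triangularization (\ref{tye}) of $T$ on $\wv^\bbC$, after which eigen\-vector lifts can be placed level by level in a way that also respects $\lz$.

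To set up the filtration concretely, I would expand $u\in\xe$ in the basis as $u=\sum_{i=1}^m\y_i e_i$; using $Ae_i=e_{i-1}$, equation (\ref{ode}-ii) decouples into $\ddot\y_m=f\y_m$ and $\ddot\y_i=f\y_i+q^2\y_{i+1}$ for $i<m$, forcing $\y_m\in\wv$. The subspace $\xe_j$ corresponds to $\y_i=0$ for $i>j$, and its top coefficient $\y_j$ still lies in $\wv$; hence the surjection $\pi_j:\xe_j\to\wv$, $u\mapsto\y_j$, has kernel $\xe_{j-1}$ and induces an isomorphism $\xe_j/\xe_{j-1}\cong\wv$. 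From $\bc e_i=q^{m+1-2i}e_i$ and (\ref{tro}) one computes $\bc\hh T\hs u=\sum_i q^{m+1-2i}[T\y_i]\hs e_i$, whence $\pi_j\circ(\bc\hh T)=q^{m+1-2j}\hs T\circ\pi_j$, so $\bc\hh T$ acts on $\xe_j/\xe_{j-1}\cong\wv$ as $q^{m+1-2j}\hs T$.

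For each $j$ and each sign I would then pick any $u^\pm_j\in\xe_j^\bbC$ with $\pi_j(u^\pm_j)=\y^\pm$; such lifts exist by surjectivity, and each is automatically of the form $\y^\pm e_j$ plus a combination of $e_1,\ldots,e_{j-1}$ with $\bbC$-val\-ued coefficients. Ordered as $(u^+_1,u^-_1,\ldots,u^+_m,u^-_m)$, these $2m$ vectors form a basis of $\xe^\bbC$ (projecting to bases on each quotient) in which $\bc\hh T$ is upper triangular: modulo $\xe_{j-1}^\bbC$, the vector $\bc\hh T\hs u^\pm_j$ reduces via (\ref{tye}) to $q^{m+1-2j}\mu^\pm u^\pm_j$, with an extra term proportional to $u^+_j$ in the Jordan sub\-case $\mu^+=\mu^-\in\bbR$ that preserves triangularity within the diagonal block. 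This yields the diagonal $\gy^\pm_j=q^{m+1-2j}\mu^\pm$, and when $\mu^\pm\in\bbR$ the $\y^\pm$ may be taken real-val\-ued, so the whole construction runs verbatim over $\bbR$ with $\xe,\xe_j,\lz$ in place of their complexifications.

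To incorporate the invariant subspace $\lz$, I would observe that $\lz^\bbC\cap\xe_j^\bbC$ is $\bc\hh T$-in\-var\-i\-ant, so its image $L_j:=\pi_j(\lz^\bbC\cap\xe_j^\bbC)\subseteq\wv^\bbC$ is $T$-in\-var\-i\-ant, hence spanned by a subset of $\{\y^+,\y^-\}$ (the Jordan sub\-case forcing $L_j\in\{0,\bbC\y^+,\wv^\bbC\}$). Refining the previous choice, I would select $u^\pm_j\in\lz^\bbC$ whenever $\y^\pm\in L_j$, which is permitted by the definition of $L_j$ and only alters $u^\pm_j$ within its coset modulo $\xe_{j-1}^\bbC$, preserving the block-triangular form. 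The short exact sequences $\,0\to\lz^\bbC\cap\xe_{j-1}^\bbC\to\lz^\bbC\cap\xe_j^\bbC\to L_j\to 0\,$ telescope to $\dim\lz^\bbC=\sum_j\dim L_j$, so the $u^\pm_j$ placed in $\lz^\bbC$ are of the correct count and, by linear independence of their successive-quotient images, form a basis of $\lz^\bbC$. The main thing to monitor is the simultaneous coordination of the three requirements -- block-triangular form in the fixed order, the prescribed shape of each $u^\pm_j$, and $\lz^\bbC$-com\-pat\-i\-bil\-i\-ty -- together with the Jordan sub\-case of (\ref{tye}); but once the filtration viewpoint is in place, none of these constitutes an essential obstacle.
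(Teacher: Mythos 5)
Your proposal is correct and follows essentially the same route as the paper's own proof: both expand solutions in the basis $\,e\nh_1\w,\dots,e\nh_m\w$, obtain the triangular system of ODEs for the coefficient functions, build $\,u\nh^\pm_j$ with leading coefficient $\,\y\nh^\pm$ using the residual freedom in the lower coefficients, and then spend that freedom, level by level along the filtration (\ref{asc}), to capture a basis of $\,\lz^\bbC$ via the dimension count $\,\dim\lz=\sum_j\dim\hs(\lz\nnh_j\w/\lz\nnh_{j-1}\w)$. Your quotient-map formulation ($\pi_j$, $L_j$) is only a cosmetic repackaging of the paper's case analysis on $\,\delta\nh_j\w\in\{0,1,2\}$.
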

\begin{proof}The equation $\,\ddot u=f\nh u+Au\,$ imposed on 
$\,u=\y\nh_1\w e\nh_1\w+\ldots+\y\nnh_j\w e\nnh_j\w$, with $\,1\le j\le m\,$
and com\-plex-val\-ued functions $\,\y\nh_1\w,\dots,\y\nnh_j\w$, reads
\begin{equation}\label{eqn}
\ddot\y\nnh_j\w\hs=\,f\y\nnh_j\w\mathrm{\ \ and\ \
}\,\ddot\y\hn_i\w=f\y\hn_i\w+\y\hn_{i+1}\w\mathrm{\ for\ }\,i<j\hh.
\end{equation}
Since, by (\ref{bss}), $\,e\nh_1\w,\dots,e\nnh_j\w$ span 
$\,\mathrm{Ker}\hskip1.7ptA\hn^j\nh$, such $\,u\,$ lie in
$\,\xe^\bbC_{\nnh j}$, for $\,\xe\nnh_j\w$ appearing in (\ref{asc}), and we
can now define $\,u\nh^\pm_j$ by (\ref{upm}), declaring $\,\y\nnh_j\w$ in
(\ref{eqn}) to be $\,\y\nh^\pm$ and then solving the equations
$\,\ddot\y\hn_i\w=f\y\hn_i\w+\y\hn_{i+1}\w$ in the descending order
$\,i=j-1,\dots,1$, with a $\,2(j-1)$-di\-men\-sion\-al freedom of choosing
the functions $\,\y\hn_i\w$. As $\,u^\pm_j\notin\xe^\bbC_{\nh i}$ for 
$\,i<j$, the $\,2m\,$ solutions $\,u^\pm_j$ are linearly independent, and 
hence constitute a basis 
$\,u\nh^+_1\nnh,u^-_1,\dots,u\nh^+_m,u^-_m$ of $\,\xe^\bbC$ which makes
$\,\bc\hh T\,$ upper triangular with the required diagonal. More precisely, by
(\ref{bss}) -- (\ref{tye}), $\,\bc\hh T\nnh u\nh^+_j$ (or,
$\,\bc\hh T\nnh u^-_j$) equals $\,q^{m+1-2\hn j}\mu\nh^+\nh u\nh^+_j$ (or, 
$\,q^{m+1-2\hn j}\mu^-\nh u^-_j$ plus a multiple of 
$\,u\nh^+_j$), plus a linear combination of 
$\,u^\pm_i$ with $\,i<j$, the multiple being $\,0\,$ unless
$\,\mu\nh^+\nnh=\mu^-\nnh\in\bbR$. 

The freedom of choosing $\,\y\hn_i\w$ will now ensure that some
$\,u\nh^+_1\nnh,u^-_1,\dots,u\nh^+_m,u^-_m$ as above also contains a basis of
$\,\lz^\bbC\nnh$. Namely, for $\,\lz\nnh_j\w=\lz\cap\xe\nnh_j\w$, we get 
in\-clu\-sion-in\-duced, obviously injective operators
$\,\lz\nnh_j\w/\lz\nnh_{j-1}\w\to\xe\nnh_j\w/\xe\nnh_{j-1}\w$, where
$\,1\le j\le m\,$ and $\,\lz\nnh_0\w=\xe\nh_0\w=\{0\}$, so that, by
(\ref{asc}), $\,\delta\nh_j\w\in\{0,1,2\}$, with 
$\,\delta\nh_j\w=\dim\hs(\lz\nnh_j\w/\lz\nnh_{j-1}\w)$. Our $\,u\nh^\pm_j$ may
now be left completely arbitrary, as before, when $\,\delta\nh_j\w=0$. If
$\,j\,$ is fixed and $\,\delta\nh_j\w=2$, our operator 
$\,\lz\nnh_j\w/\lz\nnh_{j-1}\w\to\xe\nnh_j\w/\xe\nnh_{j-1}\w$ is an
isomorphism, and so the cosets of $\,u\nh^\pm_j$, forming a basis of 
$\,[\xe\nnh_j\w/\xe\nnh_{j-1}\w]^\bbC\nh$, are also realized as
$\,\lz^\bbC_{\nnh j-1}$ cosets of solutions in $\,\lz^\bbC_{\nnh j}$,
which we select as the required modified versions
of $\,u\nh^\pm_j$. Finally, in the case $\,\delta\nh_j\w=1$, the embedded line
$\,[\lz\nnh_j\w/\lz\nnh_{j-1}\w]^\bbC$ in
$\,[\xe\nnh_j\w/\xe\nnh_{j-1}\w]^\bbC\nh$, due to its
$\,\bc\hh T\nh$-in\-var\-i\-ance, must be one of the two eigen\-vector cosets 
represented by $\,u\nh^\pm_j$, and the latter can thus be modified (within our
$\,2(j-1)$-di\-men\-sion\-al freedom) so as to lie in
$\,\lz^\bbC_{\nnh j}$. Since
$\,\delta\nh_j\w=\dim\hs(\lz\nnh_j\w/\lz\nnh_{j-1}\w)$, the total number of
modified solutions, 
$\,\delta\hn_1\w+\ldots+\delta\hn_m\w$, equals $\,\dim\hs\lz$. Therefore,
they form a basis of $\,\lz^\bbC\nnh$.
\end{proof}

\section{$\mathrm{GL}\hh(\bbZ)$-pol\-y\-no\-mi\-als}\label{gp}
\setcounter{equation}{0}
By a {\it root of unity}, or a 
$\,\mathrm{GL}\hh(\bbZ)\hn${\it-pol\-y\-no\-mi\-al\/} we mean here any complex
number $\,z\,$ such that $\,z^k\nh=1\,$ for some integer $\,k\ge1\,$ or,
respectively, any polynomial of degree $\,d\ge1\,$ with integer coefficients,
the leading coefficient $\,(-\nnh1)^d\nh$, and the constant term $\,1\,$ or
$\,-\nnh1$. It is well known, cf.\ \cite[p.\ 75]{derdzinski-roter-10}, that
\begin{equation}\label{glz}
\begin{array}{l}
\mathrm{GL}\hh(\bbZ)\hyp\mathrm{pol\-y\-no\-mi\-als\hs\ of\hs\ degree\hs\ 
}\hs\,d\,\hs\mathrm{\hs\ are\hs\ precisely\hs\ the}\\
\mathrm{characteristic\ polynomials\ of\ matrices\ in\
}\hs\mathrm{GL}\hh(d,\bbZ).
\end{array}
\end{equation}
Every complex root $\,a\,$ of a
$\,\mathrm{GL}\hh(\bbZ)$-pol\-y\-no\-mi\-al $\,P\hs$ is an invertible
algebraic integer and $\,P\nh$, if also assumed irreducible, is the minimal
monic polynomial of $\,a$. Then, due to minimality, $\,a\,$ is not a root of
the derivative of $\,P\nh$, showing that
\begin{equation}\label{spr}
\mathrm{the\ complex\ roots\ of\ an\ irreducible\ 
}\,\mathrm{GL}\hh(\bbZ)\hyp\mathrm{pol\-y\-no\-mi\-al\
are\ all\ distinct.}
\end{equation}
{\it Irreducibility\/} is always meant here to be over $\,\bbZ\,$ or,
equivalently, over $\,\bbQ$.

We say that a
$\,\mathrm{GL}\hh(\bbZ)$-pol\-y\-no\-mi\-al has a {\it cyclic root group\/}
if its (obviously nonzero) complex roots generate a cyclic multiplicative
group of nonzero complex numbers. The goal of this section is
to show that
\begin{equation}\label{drg}
\begin{array}{l}
\mathrm{the\nh\ only\hn\ irreducible\hn\ 
}\mathrm{GL}\hh(\bbZ)\hyp\mathrm{pol\-y\-no\-mi\-als\hn\ with\hn\ a\hn\
cyclic}\\
\mathrm{root\hskip2pt\ group\hskip2pt\ are\hskip2pt\ the\hskip2pt\ cyclo\-tom\-ic\hskip2pt\ and\hskip2pt\ quadratic\hskip2pt\ ones.}
\end{array}
\end{equation}
We call an irreducible $\,\mathrm{GL}\hh(\bbZ)$-pol\-y\-no\-mi\-al {\it
cyclo\-tom\-ic\/} if all of its roots are roots of unity which, up to a sign, 
agrees with the standard terminology \cite{maier}. 
The cyclic root-group condition clearly does hold for all cyclo\-tom\-ic 
polynomials and all quadratic $\,\mathrm{GL}\hh(\bbZ)$-pol\-y\-no\-mi\-als. 

First, if an irreducible $\,\mathrm{GL}\hh(\bbZ)$-pol\-y\-no\-mi\-al $\,P\hs$ 
has among its roots $\,a\,$ and $\,a^k\nh$, for some
$\,a\in\bbC\smallsetminus\{1,-\nnh1\}\,$ and an integer
$\,k\notin\{0,1,-\nnh1\}$, then
\begin{equation}\label{rou}
\mathrm{every\ complex\ root\ of\ }\,P\hs\mathrm{\ is\ a\ root\ of\ unity.}
\end{equation}
In fact, if $\,k\ge2$, then, for such $\,P\nh,a$, all $\,\lambda\in\bbC$, all 
integers $\,r\ge1$, and some $\,\mathrm{GL}\hh(\bbZ)$-pol\-y\-no\-mi\-al $\,Q$,
\begin{equation}\label{plk}
P(\lambda\nh^{k^r})\,=\,Q(\lambda)\hs Q(\lambda\nh^k)\hh\ldots\hs
Q(\lambda\nh^{k\hh^{r-1}})\hh P(\lambda)\hh,
\end{equation}
as one sees using induction on $\,r$, the case $\,r=1\,$ being obvious as
$\,\lambda\mapsto P(\lambda\nh^k)$ has $\,a\,$ as a root, which makes it
divisible by the minimal polynomial $\,P\hs$ of $\,a$, and the induction step 
amounts to replacing $\,\lambda\,$ in (\ref{plk}) by $\,\lambda\nh^k\nh$.
Now (\ref{rou}) follows, or else $\,P$ would have infinitely many roots. 
The extension of (\ref{rou}) to negative integers $\,k\,$ is in turn immediate
if one notes that $\,(PQ)\hn^*\nh=P\hh^*\nnh Q^*$ and $\,P^{**}\nnh=P\hs$ for 
the {\it inversion\/} $\,P\hh^*$ of a degree $\,d\,$ polynomial 
$\,P\nh$, defined by $\,P\hh^*(\lambda)=\lambda\nh^d\nnh P(1/\lambda)\,$ or, 
equivalently, 
$\,P\hh^*\nh(\lambda)=a_0\w\lambda\nh^d\nh+\ldots+a_{d-1}\w\lambda+a_d\w$
whenever $\,P(\lambda)=a_0\w+a_1\w\lambda+\ldots+a_d\w\lambda\nh^d\nh$. More
precisely, we then replace (\ref{plk}) with
$\,P(\lambda\nh^{k^r})\,=\,Q^*\nh(\lambda)\hs Q(\lambda\nh^k)\hh\ldots\hs
Q^{[r]}(\lambda\nh^{k\hh^{r-1}})\hh P\hh^{[r]}(\lambda)$, where
$\,P\hh^{[r]}$ equals $\,P\hs$ or $\,P\hh^*$ depending on whether
$\,r\,$ is even or odd.
\begin{remark}\label{pwers}If a $\,\mathrm{GL}\hh(\bbZ)$-pol\-y\-no\-mi\-al
has the complex roots\/ $\,c_1\w,\dots,c_d\w$, and\/ $\,k$ is an integer, 
then\/ $\,c_1^{\hs k},\dots,c_d^{\hs k}$ are the roots of a\/  
$\,\mathrm{GL}\hh(\bbZ)$-pol\-y\-no\-mi\-al. (By (\ref{glz}), we may choose
the latter polynomial to be the characteristic polynomial of the $\,k\hh$th
power of a matrix in $\,\mathrm{GL}\hh(d,\bbZ)\,$ with the
characteristic roots $\,c_1\w,\dots,c_d\w$.)
\end{remark}
\begin{lemma}\label{roots}Let an irreducible\/ 
$\,\mathrm{GL}\hh(\bbZ)\hn$-pol\-y\-no\-mi\-al\/ $\,P\hs$ of degree\/ $\,d\,$
have a root\/ $\,a^k$ 
for some\/ $\,a\in\bbC\smallsetminus\{1,-\nnh1\}\,$ and an integer\/ 
$\,k\ne0$. Then
\begin{equation}\label{inv}
\,a\,\mathrm{\ is\ an\ invertible\ algebraic\ integer}
\end{equation}
having some\/ 
$\,\mathrm{GL}\hh(\bbZ)\hn$-pol\-y\-no\-mi\-al\/ $\,S\,$ as its
minimal polynomial, and the complex roots\/ $\,c_1\w,\dots,c_r\w$ of\/ $\,S\,$
can be rearranged so that, with\/ $\,d\le r$,
\begin{equation}\label{rop}
P(\lambda)\,=\,(c_1^{\hs k}\nh-\lambda)\ldots(c_d^{\hs k}\nh-\lambda)\,\,
\mathrm{\ and\ }\,\,\{c_1^{\hs k},\dots,c_d^{\hs k}\}
=\{c_1^{\hs k},\dots,c_r^{\hs k}\}\hh,
\end{equation}
\end{lemma}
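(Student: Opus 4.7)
The plan is to establish the three assertions in order, the first two being preparatory and the third requiring the main effort.

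First, I would show (\ref{inv}) by passing through $\,a^{\pm k}$. Since $\,a^k\hs$ is a root of the irreducible $\,\mathrm{GL}\hh(\bbZ)$-polynomial $\,P\hh$, whose constant term is $\,\pm\hn1$, it is a nonzero algebraic integer whose reciprocal $\,(a^k)^{-\nnh1}$ is a root of the inverted polynomial $\,P\hh^*\nh\in\bbZ[\lambda]\,$ (see the paragraph containing (\ref{plk})) and therefore is likewise an algebraic integer. Thus $\,a^{\pm|k|}$ both lie in the ring of algebraic integers, and -- since $\,a\,$ and $\,a^{-\nnh1}\hs$ each satisfy $\,x^{|k|}\nnh-a^{\pm|k|}\nh=0\,$ over that ring -- so do $\,a\,$ and $\,a^{-\nnh1}\nh$, proving (\ref{inv}).

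Next, let $\,S\,$ denote the minimal monic polynomial of $\,a$, multiplied by $\,(-1)^r\,$ where $\,r=\deg S$, to match the leading-coefficient convention. The first step yields $\,S\in\bbZ[\lambda]\,$ with $\,S(0)=\pm\hn1$, so $\,S\,$ is a (clearly irreducible) $\,\mathrm{GL}\hh(\bbZ)$-polynomial, and its complex roots $\,c_1\w,\dots,c_r\w$ are pairwise distinct by (\ref{spr}).

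The third, and most delicate, stage establishes (\ref{rop}). I would form $\,Q(\lambda)=\prod_{i=1}^r(c_i^{\hs k}\nh-\lambda)$, which lies in $\,\bbZ[\lambda]\,$ by Remark~\ref{pwers} applied to $\,S\nh$. Because $\,a^k\hs$ is a root of $\,Q\,$ and $\,P\hh$ is the minimal polynomial of $\,a^k\nh$, $\,P\hh$ divides $\,Q\,$ in $\,\bbZ[\lambda]$, so the roots of $\,P\hh$ lie among $\,\{c_1^{\hs k},\dots,c_r^{\hs k}\}$. For the reverse inclusion -- the key point, and the main obstacle I expect -- I would invoke Galois theory on $\,\bbQ(a)/\bbQ\nh$: each $\,c_i\w$ equals $\,\sigma(a)\,$ for some embedding $\,\sigma:\bbQ(a)\hookrightarrow\bbC$, whence $\,c_i^{\hs k}\nh=\sigma(a^k)\,$ is a Galois conjugate of $\,a^k\hs$ and therefore a root of $\,P\nh$. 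Thus the set $\,\{c_1^{\hs k},\dots,c_r^{\hs k}\}\,$ coincides with the root set of $\,P\nh$, which by (\ref{spr}) has exactly $\,d\,$ distinct elements; in particular $\,d\le r$, and after rearrangement $\,c_1^{\hs k},\dots,c_d^{\hs k}$ enumerate those roots without repetition. This gives the second equality in (\ref{rop}), while the first follows since $\,P\hh$ has leading coefficient $\,(-1)^d$ and the stated roots, so $\,P(\lambda)=(-\hn1)^d\prod_{i=1}^d(\lambda-c_i^{\hs k})=\prod_{i=1}^d(c_i^{\hs k}\nh-\lambda)$.
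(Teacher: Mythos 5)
Your proof is correct, and it reaches (\ref{rop}) by a genuinely different route from the paper's. The paper starts by observing that, for $\,k>0$, the number $\,a\,$ itself is a root of the $\,\mathrm{GL}\hh(\bbZ)$-pol\-y\-no\-mi\-al $\,\lambda\mapsto P(\lambda\nh^k)$, which yields (\ref{inv}) and the factorization $\,P(\lambda\nh^k)=Q(\lambda)\hs S(\lambda)\,$ in one stroke; evaluating at each $\,c_i\w$ then shows directly that every $\,c_i^{\hs k}$ is a root of $\,P\nh$, and the reverse containment comes from noting that the polynomial $\,R\,$ with roots $\,c_1^{\hs k},\dots,c_r^{\hs k}$ (your $\,Q$) has every irreducible factor equal to $\,P\,$ -- by (\ref{spr}) and irreducibility -- so that $\,R\,$ is a power of $\,P\nh$; negative $\,k\,$ is handled by substituting $\,P\hh^*$ for $\,P\nh$. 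You prove the same two containments in mirror image: the inclusion of the roots of $\,P\hs$ among the $\,c_i^{\hs k}$ follows from divisibility of $\,Q\,$ by the minimal polynomial $\,P\hs$ of $\,a^k\nh$ (simpler than the paper's factor-by-factor argument for $\,R$), while the inclusion of all $\,c_i^{\hs k}$ among the roots of $\,P\hs$ comes from Galois conjugation, $\,c_i^{\hs k}=\sigma(a^k)$, which also lets you treat both signs of $\,k\,$ uniformly; and you obtain (\ref{inv}) by transitivity of integrality rather than from $\,P(\lambda\nh^k)$. Both arguments are sound; the paper's stays entirely within polynomial divisibility over $\,\bbZ\,$ and reuses the same device as (\ref{plk}), whereas yours trades that for standard algebraic number theory. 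The one place you are terser than you should be is the assertion that $\,S(0)=\pm\hn1$: this does follow from invertibility of $\,a\,$ (integrality of $\,a^{-\nnh1}$ forces the constant term of the minimal polynomial of $\,a\,$ to divide $\,1$), but it deserves an explicit sentence rather than the phrase ``the first step yields.''
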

\begin{proof}If $\,k>0$, the polynomial $\,\lambda\mapsto P(\lambda\nh^k)\,$
has the root $\,a$, which yields (\ref{inv}) and the equality
$\,P(\lambda\nh^k)=Q(\lambda)\hs S(\lambda)\,$ for all $\,\lambda\in\bbC\,$
and some $\,\mathrm{GL}\hh(\bbZ)$-pol\-y\-no\-mi\-al\/ 
$\,Q$. Thus, the $\,k\hh$th powers of all the roots $\,c_1\w,\dots,c_r\w$ of 
$\,S\,$ are roots of $\,P\nh$. The polynomial $\,R\,$ with the roots 
$\,c_1^{\hs k},\ldots,c_r^{\hs k}$ is a
$\,\mathrm{GL}\hh(\bbZ)$-pol\-y\-no\-mi\-al (Remark~\ref{pwers}), while each
factor in its unique irreducible factorization has simple roots by
(\ref{spr}), which are also roots of $\,P\nh$, and irreducibility of $\,P\hs$
thus implies that the factor must equal $\,P\nh$. In other words, $\,R\,$ is 
a power of $\,P\nh$, and (\ref{rop}) follows. When $\,k<0$, the preceding
assumptions (and conclusions) hold with $\,k,P\hs$ replaced by 
$\,|k|,P\hh^*$ (and $\,a,S\,$ unchanged), so that 
$\,P\hh^*\nh(\lambda)
=(c_1^{\hs|k|}\nh-\lambda)\ldots(c_d^{\hs|k|}\nh-\lambda)$, as required in
(\ref{rop}).
\end{proof}
\begin{lemma}\label{powrs}If an irreducible\/ 
$\,\mathrm{GL}\hh(\bbZ)\hn$-pol\-y\-no\-mi\-al\/ $\,P\hs$ has two roots of the 
form\/ $\,a^k$ and\/ $\,a^\ell$ for\/
$\,a\in\bbC\smallsetminus\{1,0,-\nnh1\}\,$ and two distinct nonzero integers\/ 
$\,k,\ell\ge2$, then all roots of\/ $\,P\hs$ have modulus\/ $\,1$.
\end{lemma}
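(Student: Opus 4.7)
My approach is to apply Lemma~\ref{roots} twice --- once to the root $a^k$ and once to $a^\ell$ --- and then compare moduli. Since the minimal monic polynomial $S$ of $a$ depends only on $a$, both applications yield the same $S$, with the same complex root set $\{c_1,\ldots,c_r\}$. Conclusion~(\ref{rop}) of Lemma~\ref{roots} then identifies the root set of $P$ with $\{c_1^k,\ldots,c_r^k\}$ on the one hand and with $\{c_1^\ell,\ldots,c_r^\ell\}$ on the other, so these two subsets of $\bbC$ coincide.

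The next step is to order the $c_i$ so that $|c_1|\le\cdots\le|c_r|$. Invertibility of $a$ as an algebraic integer, guaranteed by~(\ref{inv}), forces $|c_i|>0$ for every $i$, since every Galois conjugate of a unit is itself a unit. For any positive integer $j$ the map $t\mapsto t^j$ is strictly increasing on $(0,\infty)$, so the largest modulus of an element of $\{c_1^k,\ldots,c_r^k\}$ equals $|c_r|^k$, and the largest modulus of an element of $\{c_1^\ell,\ldots,c_r^\ell\}$ equals $|c_r|^\ell$. Equality of the two sets therefore forces $|c_r|^k=|c_r|^\ell$, which, combined with $k\ne\ell$ and $|c_r|>0$, yields $|c_r|=1$. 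Repeating the argument with minima in place of maxima gives $|c_1|=1$, and the ordering then forces $|c_i|=1$ for every $i$. Since the roots of $P$ are the numbers $c_i^k$, they all have modulus~$1$.

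No substantial obstacle is expected; the argument amounts to a short bookkeeping exercise on top of Lemma~\ref{roots}. The one observation worth emphasizing is that the \emph{same} Galois orbit $\{c_1,\ldots,c_r\}$ of $a$ appears in both invocations of Lemma~\ref{roots}, which is precisely what makes the extremal-modulus comparison close; note also that the hypothesis $k\ne\ell$ (rather than the specific lower bound $k,\ell\ge2$) is all that the modulus argument actually consumes.
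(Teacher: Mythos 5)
Your proof is correct and takes essentially the same route as the paper's: both apply Lemma~\ref{roots} to $a^k$ and to $a^\ell$, observe that the same root set $c_1\w,\dots,c_r\w$ of the same minimal polynomial $S$ occurs in both versions of (\ref{rop}), and then compare extremal moduli of the two coinciding sets to force $|\hh c_1\w|=\ldots=|\hh c_r\w|=1$. The only (immaterial) difference is that you phrase the comparison as an equality of maxima and minima, while the paper derives a contradiction from the greatest or least modulus differing from $1$.
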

\begin{proof}Let $\,k>\ell$. The two versions of (\ref{rop}), one for $\,k\,$
and one for $\,\ell$, involve the same roots $\,c_1\w,\dots,c_r\w$ of the 
same polynomial $\,S$, so that
\begin{equation}\label{mod}
\{|\hh c_1\w|^{\hn k},\dots,|\hh c_r\w|^{\hn k}\}\,
=\,\{|\hh c_1\w|^{\hn\ell},\dots,|\hh c_r\w|^{\hn\ell}\}\hh.
\end{equation}
If the greatest (or, least) of the moduli $\,|\hh c_1\w|,\dots,|\hh c_r\w|\,$
were greater (or, less) than $\,1$, its $\,k\hh$th (or, $\,\ell\hh$th) power
would lie on the left-hand (or, right-hand) side of (\ref{mod}) and be greater
than any number on the opposite side, contrary to the equality in
(\ref{mod}). Thus, $\,|\hh c_1\w|=\ldots=|\hh c_r\w|=1$.
\end{proof}
\begin{lemma}\label{mdone}If all roots of an irreducible\/  
$\,\mathrm{GL}\hh(\bbZ)\hn$-pol\-y\-no\-mi\-al\/ $\,P\hs$ have modulus\/ 
$\,1$, then they are roots of unity, that is, $\,P\hs$ is cyclo\-tom\-ic.
\end{lemma}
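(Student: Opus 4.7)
My plan is to deduce this from Kronecker's classical theorem, for which I will sketch a self-contained argument. Kronecker's theorem asserts that an algebraic integer whose complete set of Galois conjugates lies on the closed unit disc must be either zero or a root of unity, and the lemma is exactly this statement specialised to the roots of an irreducible $\mathrm{GL}\hh(\bbZ)$-polynomial, for which the constant term condition $\pm1$ rules out the root $0$.

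Concretely, let $\alpha\nh_1,\dots,\alpha\nh_d$ be the complex roots of $P$. Since the leading coefficient of $P\hs$ is $(-\nnh1)^d\nh$, the polynomial $(-\nnh1)^d\nh P\hs$ is monic with integer coefficients, so each $\alpha\nh_i\hs$ is an algebraic integer, and nonzero because the constant term of $P\hs$ is $\pm1$. For each integer $k\ge1$ I would form
\[
P\nh_k(\lambda)\,=\,\prod_{i=1}^d(\lambda-\alpha\nh_i^{\hs k})\hh.
\]
By irreducibility of $P\nh$, the set $\{\alpha\nh_1,\dots,\alpha\nh_d\}$ is a full Galois orbit, hence so is $\{\alpha\nh_1^{\hs k},\dots,\alpha\nh_d^{\hs k}\}$; the coefficients of $P\nh_k$ are therefore Galois-invariant symmetric functions of algebraic integers, so they lie in $\bbQ\cap\overline{\bbZ}=\bbZ$. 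The hypothesis $|\alpha\nh_i|=1$ gives $|\alpha\nh_i^{\hs k}|=1$ for all $k$, so the $j\hh$th coefficient of $P\nh_k\hs$ is bounded in absolute value by $\binom{d}{j}$, independently of $k$.

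The key pigeonhole step is now immediate: only finitely many integer polynomials $P\nh_k$ arise, so only finitely many complex numbers appear among $\alpha\nh_1,\alpha\nh_1^{\hs2},\alpha\nh_1^{\hs3},\dots\hh$, forcing $\alpha\nh_1^{\hs k}\nh=\alpha\nh_1^{\hs\ell}$ for some $k<\ell\hs$; since $\alpha\nh_1\ne0$, this gives $\alpha\nh_1^{\hs\ell-k}\nh=1$. Applying the same argument to every $\alpha\nh_i\hs$ (or, equivalently, observing that the relation $x^{\ell-k}\nh=1\hs$ is preserved by the Galois action on $\alpha\nh_1$) shows that every root of $P\hs$ is a root of unity, i.e., $P\hs$ is cyclo\-tom\-ic. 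I do not foresee any real obstacle: the one point that needs care is the integrality of the coefficients of $P\nh_k$, but that follows at once from irreducibility of $P\hs$ together with the standard fact $\bbQ\cap\overline{\bbZ}=\bbZ$.
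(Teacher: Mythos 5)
Your argument is correct, but it proves the lemma by a genuinely different route than the paper does. You give the classical symmetric-function proof of Kronecker's theorem: the roots are nonzero algebraic integers forming a full Galois orbit, so each polynomial $P_k(\lambda)=\prod_i(\lambda-\alpha_i^{\,k})$ has rational, hence integer, coefficients, and the modulus-one hypothesis bounds these coefficients uniformly in $k$; pigeonhole then forces a coincidence $\alpha_1^{\,k}=\alpha_1^{\,\ell}$ and hence a root of unity, and Galois conjugation propagates this to all roots. The paper instead realizes $P$ as the characteristic polynomial of a matrix in $\mathrm{GL}(d,\bbZ)$ via (\ref{glz}), diagonalizes that matrix over $\bbC$ using the simplicity of the roots (\ref{spr}), observes that its powers form a bounded sequence in $\mathrm{GL}(d,\bbR)$ with a convergent subsequence, and concludes from the discreteness of $\mathrm{GL}(d,\bbZ)$ that the matrix has finite order. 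The two proofs trade different standard facts: yours leans on Galois theory and the integrality of symmetric functions of algebraic integers (the point you correctly flag as needing care), while the paper's leans on the combinatorial characterization (\ref{glz}) already established in that section and on a topological discreteness argument, which keeps the number theory to a minimum. Your one appeal to an external fact, $\bbQ\cap\overline{\bbZ}=\bbZ$, is standard and correctly applied, so there is no gap.
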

\begin{proof}A matrix in $\,\mathrm{GL}\hh(d,\bbZ)\,$ with 
the characteristic polynomial $\,P\nh$, cf.\ (\ref{glz}), treated as
an auto\-mor\-phism of $\,\bbC^d$ is, in view of (\ref{spr}),
di\-ag\-o\-nal\-iz\-ed
by a suitable basis, with unit diagonal entries, so that its powers form a
bounded sequence, with a convergent sub\-se\-quence. As these powers preserve
the real form $\,\bbR\nh^d\nh\subseteq\bbC^d\nh$, the convergence takes place
in $\,\mathrm{GL}\hh(d,\bbR)\,$ and discreteness of the subset
$\,\mathrm{GL}\hh(d,\bbZ)$ makes the subsequence ultimately constant.
\end{proof}
\begin{proof}[Proof of (\ref{drg})]Consider an irreducible 
$\,\mathrm{GL}\hh(\bbZ)$-pol\-y\-no\-mi\-al with a cyclic root 
group generated by $\,a\in\bbC$. By (\ref{spr}), we may assume that 
$\,a\notin\{1,0,-\nnh1\}$. If $\,a\,$ is (or is not) a root, 
our claim follows from (\ref{rou}) (or, Lemmas~\ref{powrs} --~\ref{mdone}).
\end{proof}

\section{The combinatorial argument}\label{ca}
\setcounter{equation}{0}
The main result of this section, Theorem~\ref{noset}, will serve as the final
step needed to prove Theorem~\ref{modif} in Section~\ref{pt}.

Any $\,m,\r\in\bbZ\,$ with $\,m\ge2\,$ give rise to functions
$\,\ep,\vf:\bbZ\to\bbZ\,$ and integers $\,a_0\w,a_1\w$ such that, for any
$\,a,b\in\bbZ$,
\begin{equation}\label{aph}
\begin{array}{rl}
\mathrm{i)}&\hskip5pt\ep(a)=m-(-\nnh1)^a\hn\r-a\hh,\hskip15pt
\mathrm{ii)}\hskip5pt\vf(a)=2m-2(-\nnh1)^a\hn\r-a\hh,\\
\mathrm{iii)}&\hskip5pt\ep\,\mathrm{\ is\ bijective\ and\ }\,\vf\,\mathrm{\
is\ an\ involution,}\\
\mathrm{iv)}&\hskip5pt\ep^{-\nnh1}\nh(b)=m-(-\nnh1)^{m+\r+b}\hskip-.3pt\r-b\hh,
\hskip12pt\mathrm{v)}\hskip5pt\vf(a)=\ep^{-\nnh1}\nh(-\ep(a))\hh,\\
\mathrm{vi)}&\hskip5pta_1\w=\ep^{-\nnh1}\nh(1)=m+(-\nnh1)^{m+\r}\nh\r-1\hh,\\
\mathrm{vii)}&\hskip5pta_0\w=\ep^{-\nnh1}\nh(0)=m-(-\nnh1)^{m+\r}\nh\r\hh,
\hskip18pt\mathrm{viii)}\hskip5pta_0\w+a_1\w=2m-1\hh.
\end{array}
\end{equation}
Let integers $\,m\ge2\,$ and $\,\r\,$ be fixed, 
$\,\vr\nh=\{1,\dots,2m\}$, and $\,|\hskip2.3pt|\,$ denote cardinality.
\begin{theorem}\label{noset}There is no set\/ $\,\zs\subseteq\vr\hs$ with the
following properties.
\begin{enumerate}
\item[(a)] $a_1\w\in\zs\,$ and\/ $\,\vf(a_1\w)\notin\zs$.
\item[(b)] $a_0\w\in\zs\,$ if and only if\/ $\,m\,$ is even.
\item[(c)] If\/ $\,a,b\in\vr\hs$ and\/ $\,a+b=2m+1$, then exactly one of\/
$\,a,b\,$ lies in\/ $\,\zs$.
\item[(d)]For every\/ $\,a\in\zs\smallsetminus\{a_1\w\}\,$ there exists\/
$\,b\in\zs\,$ with $\,\ep(b)=-\ep(a)$.
\item[(e)] $|\hs\zs\cap\{1,2,\dots,2j\}|\le j\,$ whenever\/ 
$\,j\in\{1,\dots,m\}$.
\end{enumerate}
\end{theorem}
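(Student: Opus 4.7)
The plan is to argue by contradiction: assume $\zs$ satisfies (a)--(e), and pass to $\zs'=\ep(\zs)\subseteq\bbZ$, which inherits a rigid structure from (a)--(d). Using the formulas (\ref{aph}) one checks that $\ep(a)+\ep(2m+1-a)=-1$ and $\ep(\vf(a))=-\ep(a)$, which translate (c) into the pairing $v\in\zs'\Leftrightarrow-1-v\notin\zs'$ and (d) into the symmetry $-v\in\zs'$ for every $v\in\zs'\setminus\{1\}$. Condition (a) becomes $1\in\zs'$ together with $-1\notin\zs'$, and (b) reads $0\in\zs'$ iff $m$ is even.

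If $m$ is odd, then (\ref{aph})(viii) gives $a_0+\vf(a_1)=a_0+a_1+2=2m+1$, so $a_0$ and $\vf(a_1)$ are (c)-paired; by (a), $\vf(a_1)\notin\zs$, whence (c) forces $a_0\in\zs$, contradicting (b). Now assume $m$ is even. Two preliminary consequences of (c) and (d) combined are: first, $\pm2\notin\zs'$, for otherwise (d) would place both $\pm2$ in $\zs'$ and then (c) would eject $1$; and second, the only consecutive pair of integers both lying in $\zs'$ is $(0,1)$, since applying (c) to $-v\in\zs'$ shows that $v\in\zs'\setminus\{1\}$ forces $v-1\notin\zs'$.

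The crucial step is to show that $\zs'$ contains no nonzero even integer. For such a $u$, the no-consecutive-pair rule gives $u+1\notin\zs'$ (as $u\ne0$), and (c) then yields $-2-u\in\zs'$; since $-2-u$ is even, hence $\ne1$, the symmetry of (d) produces $u+2\in\zs'$. Iterating the resulting implication (nonzero even $u\in\zs'\Rightarrow u+2\in\zs'$), a positive $u$ generates the unbounded chain $\{u,u+2,u+4,\ldots\}\subseteq\zs'$, impossible by finiteness; a negative $u\le-4$ reaches $-2\in\zs'$ in finitely many steps, contradicting the first preliminary fact. Hence the even part of $\zs'$ equals $\{0\}$, so the odd part has cardinality $m-1$.

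The identification of the even part with $\{0\}$, combined with (c), forces the odd part of $\zs'$ to coincide with $O\setminus\{-1\}$, where $O$ is the set of odd values taken by $\ep$ on $\vr$; and the required symmetry of $\zs'\cup\{-1\}=\{0\}\cup O$ about $0$ forces the arithmetic progression $O$ itself to be symmetric about $0$. Inspection of (\ref{aph})(i) shows this occurs precisely when $\r=0$ (if $(-1)^{m+\r}=+1$) or $\r=-1$ (if $(-1)^{m+\r}=-1$), yielding $\{a_0,a_1\}=\{m-1,m\}$ in both cases; pulling $\zs'$ back through $\ep$ and counting gives $|\zs\cap\{1,\ldots,m\}|=m/2+1$, contradicting (e) at $j=m/2$. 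The main obstacle is the reduction of the even part of $\zs'$ to $\{0\}$, which requires interleaving the odd- and even-parity consequences of (c) and (d) and exploiting finiteness through the propagation $u\mapsto u+2$.
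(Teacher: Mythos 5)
Your change of variables via $\ep$ is faithful, and your endgame (once the even part of $\ep(\zs)$ is known to be $\{0\}$ and $\r\in\{0,-1\}$) reproduces the paper's final contradiction with (e). The proof nevertheless has a genuine gap: you repeatedly apply the transported form of (c), namely ``$v\in\ep(\zs)\Leftrightarrow-1-v\notin\ep(\zs)$'', to arbitrary integers $v$, whereas (c) only concerns pairs contained in $\vr$; the biconditional is valid only for $v\in\ep(\vr)$, and $\ep(\vr)$ is \emph{not} an interval. It is the disjoint union of $m$ consecutive even integers and $m$ consecutive odd integers whose relative offset is controlled by $\r$ (for $m=4$, $\r=1$ one gets $\ep(\vr)=\{-5,-3,-2,-1,0,1,2,4\}$). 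Two steps break down as a result. The elimination of odd $m$ needs $a_0,\vf(a_1)\in\vr$, which fails whenever $|\r|\ge m$ (for $m=3$, $\r=3$ one has $a_0=0$ and $\vf(a_1)=7$, so (c) says nothing and no conflict with (b) arises); the paper rules this out by first proving $|\r|\le m-1$ in (\ref{bmt}). More seriously, the ``crucial step'' deduces $-2-u\in\ep(\zs)$ from $u+1\notin\ep(\zs)$, which requires $u+1\in\ep(\vr)$; a short computation shows that $u+1\in\ep(\vr)$ holds for every even $u\in\ep(\vr)$ precisely when $\r\in\{0,-1\}$ --- exactly the conclusion the propagation is meant to deliver. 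So the iteration $u\mapsto u+2$ is unjustified (indeed circular) where it matters, and its ``contradiction by unboundedness'' evaporates: the chain simply stops at the point where the membership hypothesis fails.

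What is missing is precisely the content of the paper's (\ref{fls})--(\ref{rzo}): a direct analysis of how the two reflections $a\mapsto\vf(a)$ and $a\mapsto2m+1-a$ (your $v\mapsto-v$ and $v\mapsto-1-v$) interact with the actual shape of $\vr$, which pins down $\r\in\{0,-1\}$ \emph{before} any propagation is run. Note that the paper's (\ref{ind}) is exactly your step $u\mapsto u+2$, but it is proved only after (\ref{rzo}) is established, and with the explicit restriction $1\le i<m-2$ guaranteeing that every intermediate integer in the four-arrow chain stays inside $\vr$. You have found the right mechanism but omitted the domain verifications that constitute most of the work.
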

\begin{proof}Equivalently, (c) states that $\,\zs\,$ is a selector for the 
$\,m$-el\-e\-ment family $\,\{\{a,b\}\subseteq\vr:a+b=2m+1\}$. Hence
$\,|\zs|=m$. In addition,
\begin{equation}\label{bmt}
\mathrm{i)}\hskip6pt|\zs|\,=\,m\,\ge\,3\hh,\hskip11pt
\mathrm{ii)}\hskip6pt|\hh\r|\,\le\,m\,-\,1\hh,\hskip11pt
\mathrm{iii)}\hskip6pt\vf(\zs\smallsetminus\{a_1\w\})
=\zs\smallsetminus\{a_1\w\}\hh.
\end{equation}
In fact, as $\,a_1\w\ne a_0\w$ and $\,a_0\w+a_1\w=2m-1\,$ by
(\ref{aph}-viii), having $\,m=2$ in (\ref{bmt}-i) would,
by (a) -- (b), give $\,\zs=\{a_0\w,a_1\w\}\subseteq\{1,2,3,4\}\,$ and
$\,a_0\w+a_1\w=3$, implying that $\,\zs=\{1,2\}$, contrary to (e) for 
$\,j=1$. Next, (d) and (\ref{aph}-v) give 
$\,\vf(\zs\smallsetminus\{a_1\w\})\subseteq\zs\smallsetminus\{a_1\w\}$, cf.\
(\ref{aph}-iii), with the image not containing $\,a_1\w$, as otherwise, by 
(\ref{aph}-iii), $\,\vf(a_1\w)\,$ would lie in $\,\zs$, which contradicts
(\ref{aph}-i); and (\ref{aph}-iii) makes the inclusion an equality, proving
(\ref{bmt}-iii). Finally,
using (\ref{bmt}-i), we may fix $\,a\in\zs\smallsetminus\{a_1\w,2m\}$. Thus,
by (\ref{bmt}-iii) and (\ref{aph}-ii), $\,1\le\vf(a)=2m-2(-\nnh1)^a\r-a\le2m$. 
When $\,a\,$ is even (odd) this becomes $\,2\le2m-2\r-a\le2m\,$ (or, 
$\,1\le2m+2\r-a\le2m-1$), yielding $\,1-m\le\r\le m-2\,$ (or 
$\,1-m\le\r\le m-1$), and (\ref{bmt}-ii) follows.

Let us now define $\,c_\pm\w\in\bbZ\,$ by
\begin{equation}\label{cpm}
c_\pm\w=m\mp\r\mathrm{,\ so\ that\ }\,1\le\,c_\pm\w\le\,2m-1\,\mathrm{\ due\
to\ (\ref{bmt}}\hyp\mathrm{ii),}
\end{equation}
denote by $\,\vr\nnh_\pm\w$ (or, $\,\zs_\pm\w$) the set of all $\,a\in\vr\hs$
(or, $\,a\in\zs\smallsetminus\{a_1\w\}$) having $\,(-\nnh1)^a\nh=\pm1$ and,
finally, given $\,a,b\in\vr\nnh_\pm\w$ with $\,a\le b$, set
$\,[a,b\hh]_\pm\w=[a,b\hh]\cap\vr\nnh_\pm\w$, referring to any such
$\,[a,b\hh]_\pm\w$ as an {\it even$/$\hskip-1.4ptodd sub\-in\-ter\-val\/} of
$\,\vr\nh$. Finally, we let 
$\,\zr_\pm\w$ stand for the maximal even/odd sub\-in\-ter\-val of $\,\vr\hs$
which is symmetric about $\,c_\pm\w$. Then
\begin{equation}\label{beb}
\begin{array}{rl}
\mathrm{i)}&\hskip5pt\zs=\zs_+\w\cup\zs_-\w\cup\{a_1\w\}\hh,\hskip17pt
\vf(\zs_\pm\w)=\zs_\pm\w\hh,\hskip17pt\zs_\pm\w\subseteq\zr_\pm\w\hh,\\
\mathrm{ii)}&\hskip5pt\zr_+\w=[2,2m-2\r-2]_+\w\hh,\hskip9pt
\zr_-\w=[2\r+1,2m-1]_-\w\hskip9pt\mathrm{if\ }\,\r\ge0\hh,\\
\mathrm{iii)}&\hskip5pt\zr_+\w=[-\nh2\r,2m]_+\w\hh,\hskip9pt
\zr_-\w=[1,2m+2\r-1]_-\w\hskip9pt\mathrm{if\ }\,\r<0\hh,\\
\mathrm{iv)}&\hskip5pt\vf\,\mathrm{\ restricted\ to\ even/odd\ integers\ is\
the\ reflection\ about\ }\,c_\pm\w.
\end{array}
\end{equation}
In fact, the first relation in (\ref{beb}-i) is obvious, the second immediate
from (\ref{bmt}-iii) since, by (\ref{aph}-ii), $\,\vf:\bbZ\to\bbZ\,$ preserves
parity, Also, (\ref{aph}-ii) yields (\ref{beb}-iv), which in turns shows that 
$\,\zs_\pm\w=\vf(\zs_\pm\w)\,$ is a (possibly empty) union
of sets $\,\{a,b\}\,$ having $\,c_\pm\w$ as the midpoint, and so
$\,\zs_\pm\w\subseteq\zr_\pm\w$. Finally, depending on whether
$\,c_\pm\w=m\mp\r$ is less (or, greater) than the midpoint $\,m+1/2\,$ of
$\,\vr\nh$, one endpoint of $\,\zr_\pm\w$ must lie in $\,\{1,2\}\,$ (or, in
$\,\{2m-1,2m\}$), and the other endpoint added to this one must yield
$\,2c_\pm\w$, which proves (\ref{beb}-ii) -- (\ref{beb}-iii).

Note that, as an obvious consequence of (\ref{beb}),
\begin{equation}\label{fls}
\begin{array}{l}
\zs\smallsetminus\{a_1\w\}\,\mathrm{\ fails\ to\ include\ specific\ integers\
from\ }\,\vr\nh\mathrm{,\ which\ are\hskip-3pt:}\\
\mathrm{the\ lowest\ }\,\r\,\mathrm{\ odd\ and\ highest\
}\,\r+1\,\mathrm{\ even\ ones\ when\ }\,\r>0\hh,\\
\mathrm{the\ highest\ }\,|\hh\r|\,\mathrm{\ odd\ and\ lowest\ 
}\,|\hh\r|-1\,\mathrm{\ even\ ones\ for\ }\,\r<0\hh,\\
\mathrm{the\ integer\ }\,2m\, \mathrm{\ if\ }\,\r=0\hh.
\end{array}
\end{equation}
Furthermore, one necessarily has
\begin{equation}\label{rzo}
\r\,\in\,\{0,\hs-\nnh1\}\hh.
\end{equation}
To see this, we begin by excluding the possibility that $\,\r\ge2\,$ (or,
$\,\r\le-\hn3$). Namely, if this was the case, (\ref{fls}) would give
$\,1,3,2m-2,2m\notin\zs\smallsetminus\{a_1\w\}\,$ (when $\,\r\ge2$), or 
$\,2,4,2m-3,2m-1\notin\zs\smallsetminus\{a_1\w\}\,$ (for $\,\r\le-\hn3$). From
the two pairs $\,\{1,2m\},\,\{3,2m-2\}$ (or, $\,\{2,2m-1\},\,\{4,2m-3\}$)
we would choose one, $\,\{a,b\}$, having $\,a_1\w\notin\{a,b\}\,$ and
$\,a+b=2m+1$, as well as $\,a,b\notin\zs$, which contradicts (c).

The next two cases that need to be excluded are $\,\r=1\,$ and $\,\r=-\nh2$. If
one of them occurred, (\ref{fls}) would give
$\,1,2m\notin\zs\smallsetminus\{a_1\w\}\,$ (if $\,\r=1$), or 
$\,\,2,2m-1\notin\zs\smallsetminus\{a_1\w\}$ (for $\,\r=-\nh2$), which would
again contradict (c), unless $\,a_1\w\in\{1,2m\}\,$ and $\,\r=1$, or
$\,a_1\w\in\{2,2m-1\}\,$ and $\,\r=-\nh2$. However, each of the resulting
four possible values $\,(1,1),(2m,1),(2,-\nh2),(2m-1,-\nh2)\,$ for the
ordered pair $\,(a_1\w,\r)\,$ leads, via (\ref{aph}-vi), to the immediate
conclusion that $\,m\le1$, contrary to (\ref{bmt}-i), and so 
(\ref{rzo}) follows.

As the next step, we write $\,m=2\hn j\,$ ($m\,$ even) or $\,m=2\hn j+1\,$
($m\,$ odd), so that $\,j\ge1\,$ by (\ref{bmt}-i), and proceed to establish
the inclusion
\begin{equation}\label{icl}
\zs'\cup\{a_*\w\}\subseteq\zs\cap\{1,2,\dots,2j\}\mathrm{,\ with\
}\,|\hs\zs'\cup\{a_*\w\}|=j+1\hh,
\end{equation}
which will contradict (e), thus completing the proof of the theorem. Here
$\,\zs'$ is the $\,j$-el\-e\-ment set consisting of all integers from
$\,\{1,2,\dots,2j\}\,$ with a specific parity (even if $\,\r=-\nnh1$, odd for
$\,\r=0$), and $\,a_*\w=a_0\w$ ($m\,$ even) or $\,a_*\w=a_1\w$ ($m\,$ odd).

To derive (\ref{icl}), we list various conclusions in two separate columns 
(one for either possible value of $\,\r$):
\[
\begin{array}{rll}
\mathrm{(A)}\hskip10pt&\r=0&\r=-\nnh1\\
\mathrm{(B)}\hskip10pt&\zs'\nh=\{1,3,\dots,2\hn j-1\}\hskip20pt&\zs'\nh
=\{2,4,\dots,2\hn j\}\\
\mathrm{(C)}\hskip10pt&a_*\w=2\hn j\in\zs&a_*\w=2\hn j-1\in\zs\\
\mathrm{(D)}\hskip10pt&a_1\w=m-1&a_1\w=m-1+(-\nnh1)^m\\
\mathrm{(E)}\hskip10pt&a_0\w=m&a_0\w=m-(-\nnh1)^m\\
\mathrm{(F)}\hskip10pt&2m\notin\zs&2m-1\notin\zs\\
\mathrm{(G)}\hskip10pt&1\in\zs&2\in\zs\hh.
\end{array}
\]
In fact, (B) is the definition of $\,\zs'\nh$, (E), (D), (C) follow from
(\ref{aph}-vii) -- (\ref{aph}-viii), with $\,a_*\w\in\zs\,$ due to (a) -- (b),
while (F) is immediate from (\ref{fls}) for $\,\r\in\{-\nnh1,0\}$, and (G)
from (F) and (c). What still remains to be shown, for (\ref{icl}), is the
inclusion
\begin{equation}\label{bpi}
\zs'\hs\subseteq\,\zs\hh,
\end{equation}
as (\ref{bpi}) combined with (B) -- (C) obviously yields (\ref{icl}).

To this end, consider $\,\varPsi:\bbZ\to\bbZ\,$ given by
$\,\varPsi(a)=2m+1-a$, so that (c) amounts to
$\,|\hs\zs\cap\{a,\varPsi(a)\}|=1\,$ for all $\,a\in\vr\hs$ or, equivalently, 
$\,\varPsi(\zs)=\vr\smallsetminus\zs\,$ and 
$\,\varPsi(\vr\smallsetminus\zs)=\zs$. 
Now, in our case, given an integer $\,i$,
\begin{equation}\label{ind}
\mathrm{if\ }\,1\le\,i<m-2\,\mathrm{\ and\ }\,i\in\zs\mathrm{,\ then\
}\,i+2\in\zs\hh.
\end{equation}
Namely, for the sign $\,\pm\,$ such that $\,(-\nnh1)^i\nh=\pm1$,
(\ref{bmt}-iii) and (\ref{beb}-iv) yield
\[
\mathop{i}\limits_{\textstyle{\mathrm{in}}}
\mathop{\,\,-\!\!\!\longrightarrow\,\,}
\limits_{\textstyle{\vf}}
\mathop{2m\mp2\r-i}\limits_{\textstyle{\mathrm{in}}}
\mathop{\,\,-\!\!\!\longrightarrow\,\,}
\limits_{\displaystyle{\varPsi}}
\mathop{i\pm2\r+1}\limits_{\textstyle{\mathrm{out}}}
\mathop{\,\,-\!\!\!\longrightarrow\,\,}
\limits_{\textstyle{\vf}}
\mathop{2m-i-1}\limits_{\textstyle{\mathrm{out}}}
\mathop{\,\,-\!\!\!\longrightarrow\,\,}
\limits_{\displaystyle{\varPsi}}
\mathop{i+2}\limits_{\textstyle{\mathrm{in}}}\hn,
\]
`in' or `out' meaning lying in $\,\zs\,$ or in $\,\vr\smallsetminus\zs$. In
fact, the four sums of pairs of adjacent integers in the above displayed line
are $\,2(m\mp\r)=2c_\pm\w$, $\,2m+1$, $\,2(m\pm\r)=2c_\mp\w$, $\,2m+1$, as
required in the definitions of the reflections $\,\varPsi\,$ and $\,\vf$, the
latter restricted to even/odd integers. On the other hand, the inequality
$\,i<m-2\,$ implies, via (D), that $\,i\ne a_1\w\ne2m-i-1\,$ (and so
$\,2m-i-1\notin\zs$, for otherwise $\,i\pm2\r+1=\vf(2m-i-1)\,$ would lie in
$\,\zs$).

Now (\ref{ind}) combined with (G) and (B) proves (\ref{bpi}) by induction on
$\,i$. Specifically, the highest value of odd (or, even) $\,i\,$ such that 
this yields $\,i\in\zs\,$ is the one with $\,i-2<m-2\le i$, which is the
required value $\,2\hn j-1\,$ (or, $\,2\hn j$) except for even $\,m\,$ and
$\,\r=-\nnh1$. In the latter case, although we get $\,2\hn j-2\,$ instead of 
$\,2\hn j=m$, we have $\,2\hn j=m=a_1\w\in\zs\,$ nevertheless, due to (D) and
(a).
\end{proof}

\section{Proof of Theorem~\ref{modif}}\label{pt}
\setcounter{equation}{0}
We argue by contradiction. Suppose that, for some rank-one ECS model manifold 
$\,(\hm\nh,\hg)\,$ defined by (\ref{met}), with (\ref{dta}), and for $\,\gp\,$
as in Theorem~\ref{isogp}, there exists
\begin{equation}\label{sgp}
\begin{array}{l}
\mathrm{a\ sub\-group\ }\hs\Gm\nh\subseteq\nnh\gp\hs\mathrm{\ acting\ on\ 
}\hs\hm\nh\mathrm{\ freely\ and\ properly\ discontin}\hyp\\
\mathrm{u\-ous\-ly\ with\ a\ generic\ compact\ quotient\ manifold\
}\,M\hs=\,\hm\nnh/\hh\Gm\nnh,
\end{array}
\end{equation}
yet $\,K\hskip-1.5pt_+\w$ in (\ref{ppr}) is infinite cyclic. As
$\,K\hskip-1.5pt_+\w=K\nh\cap\hs(0,\infty)$, by Lemma~\ref{image}, for the
image $\,K\hs$ of the homo\-mor\-phism
$\,\Gm\ni(q,p,\bc\nh,r,u)\mapsto q$, we get (\ref{dch}-b).
Theorem~\ref{dilat} now allows us to set $\,I\nh=(0,\infty)\,$ in
(\ref{dta}), and all $\,(q,p,\bc\nh,r,u)\in\Gm\hs$ have $\,p=0$. We fix
\begin{equation}\label{gen}
\hga\,=\,(q,0,\bc,\hat r,\hat u)\in\Gm\,\mathrm{\ such\ that\
}\,q\,\mathrm{\ is\ a\ generator\ of\ }\,K\hskip-1.5pt_+\w. 
\end{equation}
From (\ref{frs}) and Theorem~\ref{isogp}, we have
(\ref{qtf}) and $\,\bc\hskip-1.8ptA\bc^{-\nnh1}\nh=\hs q^2\hskip-2.3ptA$, 
for $\,f\nh,A\,$ in (\ref{dta}). Using the notations of (\ref{tro}) --
(\ref{cte}), with $\,m=n-2$, we replace $\,\Gm\nh$, without loss of generality,
by a fi\-nite-in\-dex sub\-group $\,\Gm\hskip-2.3pt_+\w$, which 
allows us to assume that
\begin{equation}\label{wlo}
q\in(0,\infty)\smallsetminus\{1\}\hh,\hskip9pt\bc\,\mathrm{\ has\ positive\
eigen\-values,\ and\ }\,\mu^\pm\nh\in\bbC\smallsetminus(-\infty,0\hs]\hh.
\end{equation}
Namely, each of these additional requirements amounts to passing from
$\,\Gm\hs$ to a sub\-group of index at most $\,2\,$ (or, equivalently, from
$\,M\,$ to the corresponding finite isometric covering). Specifically, we
successively intersect
$\,\Gm\hs$ with the kernels of the  homo\-mor\-phisms
$\,\Gm\to\{1,-\nnh1\}\,$ sending $\,(q,0,\bc\nh,r,u)\,$ to
$\,\mathrm{sgn}\,q\,$ and $\,\mathrm{sgn}\,\bc$, the latter sign accounting 
for positivity of negativity of the eigen\-values of $\,\bc$. 
(According to Corollary~\ref{slfsm}, one of these cases must take place, and 
all $\,\bc\,$ occurring in $\,\gp\,$ form an Abel\-i\-an group.) The
last condition (positivity of $\,\mu^\pm$ when they are real) is achieved by
replacing $\,\hga,q,\bc,\mu^\pm$ with their squares and $\,\Gm\hs$ with the
corresponding homo\-mor\-phic pre\-im\-age of the in\-dex-two sub\-group of
$\,K\hskip-1.5pt_+\w$ generated by $\,q^2\nh$, which is to be done only if
$\,\mu^\pm$ are real and negative, cf.\ (\ref{mpq}). Finally, we define a
linear operator $\,\varPi:\bbR\times\xe\nh\to\bbR\times\xe\hs$ by
\begin{equation}\label{pru}
\varPi(r,u)\,
=\,(2\hh\varOmega(\bc\hh T\nnh u,\hat u)\,+\,r/q,\,\bc\hh T\nnh u)\hh.
\end{equation}
From the assumption that $\,K\hskip-1.5pt_+\w$ is infinite cyclic we will
derive, in Lemma~\ref{gnrtr}, the existence of a vector sub\-space
$\,\lz\subseteq\xe\hs$ having the following properties.
\begin{equation}\label{ace}
\begin{array}{rl}
\mathrm{(A)}&\dim\lz=m\mathrm{,\ where\ }\,m=n-2\hh.\\
\mathrm{(B)}&\bc\hh T\,\mathrm{\ leaves\ }\,\lz\,\mathrm{\ invariant.}\\
\mathrm{(C)}&\varPi(\Sigma')=\Sigma'\mathrm{\ for\ some\ lattice\
}\,\Sigma'\mathrm{\ in\ }\,\bbR\times\lz\hh.\\
\mathrm{(D)}&\varOmega(u,u')=0\,\mathrm{\ whenever\ }\,u,u'\in\lz\hh.\\
\mathrm{(E)}&u\mapsto u(t)\,\mathrm{\ is\ an\ iso\-mor\-phism\
}\,\lz\to\mv\hs\mathrm{\ for\ every\ }\,t\in(0,\infty)\hh.
\end{array}
\end{equation}
\begin{remark}\label{eqdif}For any rank-one ECS model manifold (\ref{dta}) --
(\ref{met}), with $\,\hp$ and the solution space $\,\xe\hs$ defined in
(\ref{ghm}) and (\ref{vss}), if a vector sub\-space $\,\lz\subseteq\xe$
satisfies (\ref{ace}-E), with any $\,I\hs$ instead of $\,I\nh=(0,\infty)$,
then, restricting (\ref{prj}) to $\,(0,\infty)\times\bbR\times\lz$ we
clearly obtain an $\,\hp$-equi\-var\-i\-ant dif\-feo\-mor\-phism 
\[
I\nh\times\bbR\times\lz\,\to\,\hm\hs
=\,I\nh\times\bbR\times\mv\nh,
\]
its bijectivity being due to (\ref{ace}-E), and smoothness of 
its inverse -- to the smooth dependence of the iso\-mor\-phism 
$\,\lz\ni u\mapsto u(t)\in\mv\hs$ on $\,t\,$ along with
real-an\-a\-lyt\-ic\-i\-ty of the iso\-mor\-phism-in\-ver\-sion operation.
\end{remark}
\begin{lemma}\label{gnrtr}A vector sub\-space\/ $\,\lz\subseteq\xe\hs$ with\/
{\rm(\ref{ace})} exists if the conditions preceding\/ {\rm(\ref{pru})} are all 
satisfied.
\end{lemma}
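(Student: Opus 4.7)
The plan is to set $\Sigma:=\Gm\cap\hp$, to let $\pi\nh:\hp\to\xe$ denote the projection $(1,0,\mathrm{Id},r,u)\mapsto u$ of (c) following (\ref{ghm}), which is a group homomorphism with kernel the center $\bbR\times\{0\}$ of $\hp$, and to take $\lz$ to be the real span of $\pi(\Sigma)$ in $\xe$, with $\Sigma':=\Sigma$ viewed as a subset of $\bbR\times\lz$. By Corollary~\ref{cmmte} and positivity of $\bc$'s eigenvalues, any $\gamma\in\Gm$ with $q=1$ has $\bc=\mathrm{Id}$; combined with $p=0$ (from Theorem~\ref{dilat}, recentered), this identifies $\Sigma$ with $\ker(\gamma\mapsto q)$, so $\Gm/\Sigma\cong K\hskip-1.5pt_+\w$ is infinite cyclic by hypothesis.

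First I would analyze $\Sigma$. The function $\log t$ on $\hm$ descends modulo $\log q$ to a submersion $\tau\nh:\nh M\to S^1$; by Ehresmann's theorem (Remark~\ref{bndpr}) its fibers are compact and, for every $t_0\in(0,\infty)$, identified with the quotient of the level $\{t_0\}\nh\times\nh\bbR\nh\times\nh\mv$ by $\Sigma$. Thus $\Sigma$ acts freely, properly discontinuously, and cocompactly on each level $\cong\bbR^{m+1}$, so it is torsion-free. By Remark~\ref{cnjug}, conjugation by $\hga$ restricts to $r\mapsto r/q$ on the center $\bbR\times\{0\}$, and conjugation by $\hga^{-1}$ to $r\mapsto qr$; since $\Sigma$ is normal in $\Gm$, a nontrivial $\Sigma\cap(\bbR\times\{0\})=\bbZ r_0$ would force both $1/q$ and $q$ into $\bbZ$, giving $q=1$ against (\ref{dch}-b). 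Hence $\Sigma\cap(\bbR\times\{0\})=\{0\}$. A direct computation in $\hp$ yields the commutator identity $[(r,u),(\hat r,\hat u)]=(2\hh\varOmega(\hat u,u),0)$, so $[\Sigma,\Sigma]\subseteq\Sigma\cap(\bbR\times\{0\})=\{0\}$: the group $\Sigma$ is Abelian, so (being a torsion-free Abelian group acting freely and cocompactly on $\bbR^{m+1}$) isomorphic to $\bbZ^{m+1}$, and $\varOmega$ vanishes on $\pi(\Sigma)\times\pi(\Sigma)$. Since $\pi|_\Sigma$ is injective, $\pi(\Sigma)$ has rank $m+1$ in $\xe$.

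Now $\lz=\mathrm{span}_\bbR\pi(\Sigma)$ is $\varOmega$-isotropic (so $\dim\lz\le m$) and $\bc\hh T\hh$-invariant, the latter because $\pi\circ\varPi=\bc\hh T\circ\pi$ while $\bc\hh T$ is $\bbR$-linear. To pin down $\dim\lz=m$, note that on each level the $\Sigma$-action translates the $\mv$-coordinate by the subgroup $\Lambda_{t_0}:=\{u(t_0)\,:\,u\in\pi(\Sigma)\}\subseteq\mv$; if $\mathrm{span}_\bbR\Lambda_{t_0}$ were a proper subspace of $\mv$, the compact quotient $(\bbR\times\mv)/\Sigma$ would surject continuously onto the non-compact $\mv/\mathrm{span}_\bbR\Lambda_{t_0}$, which is impossible. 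Hence the evaluation $\lz\to\mv$, $u\mapsto u(t)$, is surjective for every $t\in(0,\infty)$; together with $\dim\lz\le m=\dim\mv$ this forces $\dim\lz=m$ and makes the evaluation an isomorphism, establishing (\ref{ace}-A), (\ref{ace}-D) (middle-dimensional isotropy is Lagrangian), and (\ref{ace}-E). Isotropy of $\lz$ further makes $\bbR\times\lz\subseteq\hp$ an Abelian Lie subgroup of dimension $m+1$, and $\Sigma\subseteq\bbR\times\lz$ is a rank-$(m+1)$ discrete subgroup of it, hence a full lattice; $\varPi$-invariance of $\Sigma$ together with $\varPi(\bbR\times\lz)\subseteq\bbR\times\lz$ (from $\bc\hh T\hh$-invariance of $\lz$) yields (\ref{ace}-C).

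The hard part is the twofold use of cocompactness: first to extract from the bundle structure of $\tau$ that $\Sigma$ acts cocompactly on $\bbR^{m+1}$ (which, combined with $q\ne 1$ and the commutator identity, forces $\Sigma\cong\bbZ^{m+1}$ with $\pi(\Sigma)$ being $\varOmega$-isotropic), and second to upgrade the a priori bound $\dim\lz\le m$ to equality via the $\Lambda_{t_0}$-span argument, which simultaneously guarantees the evaluation isomorphism at every $t\in(0,\infty)$.
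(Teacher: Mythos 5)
Your proposal is correct and follows essentially the same route as the paper: identify $\,\Sigma\,$ as the kernel of the degree homomorphism onto $\,\bbZ$, use the Ehresmann bundle over $\,S^1$ to get compact fibres on which $\,\Sigma\,$ acts cocompactly, deduce that $\,\Sigma\,$ is Abelian with trivial intersection with the centre (hence $\,\lz\,$ is $\,\varOmega$-isotropic and $\,\dim\lz\le m$), obtain surjectivity of the evaluation maps from compactness of the fibres to force $\,\dim\lz=m\,$ and condition (\ref{ace}-E), and use conjugation by the generator $\,\hga\,$ (i.e., $\,\varPi$) for (\ref{ace}-B) and (\ref{ace}-C). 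The only differences are in how individual sub-steps are justified (a commutator identity and a normality/scaling argument where the paper invokes the flow of $\,\hna\hn t\,$ and Lemma~\ref{cplfl}(b), and a cohomological-dimension count for $\,\Sigma\cong\bbZ^{m+1}$ where the paper argues the lattice property directly), and these are all sound.
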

\begin{proof}The surjective sub\-mer\-sion 
$\,\hm\ni(t,s,v)\mapsto(\log t)/(\log q)\in\bbR$, being clearly 
equi\-var\-i\-ant relative to the homo\-mor\-phism
\begin{equation}\label{hmz}
\Gm\hskip-2.3pt_+\w\ni\gamma'\nh=(q'\nh,0,\bc'\nh,r'\nh,u')
\mapsto(\log q')/(\log q)\in\bbZ
\end{equation}
along with the 
obvious actions of $\,\Gm$ on $\,\hm\nh$, via (\ref{act}) with $\,p=0$, and
$\,\bbZ\,$ on $\hs\bbR\hs$ by translations, descends to a surjective
sub\-mer\-sion $\,M\to S^1$ which is
\begin{equation}\label{dsc}
\mathrm{a\ bundle\ projection\ }\,\hm\nnh/\hh\Gm\hskip-2.3pt_+\w\to\bbR/\hs\bbZ
=S^1\nh.
\end{equation}
according to Remark~\ref{bndpr}. The kernel $\,\Sigma\,$ of (\ref{hmz})
equals 
$\,\Sigma=\{(1,0,\mathrm{Id})\}\times\Sigma'$ for some set 
$\,\Sigma'\nh\subseteq\bbR\times\xe\nh$, since $\,\bc'\nh$ in (\ref{hmz}),
due to its positivity, (\ref{frs}) and Corollary~\ref{slfsm}, is uniquely
determined by
$\,q'\nh$. Thus, $\,\Sigma\subseteq\hp$, for $\,\hp\,$ given by (\ref{ghm}).
As a consequence of Lemma~\ref{cplfl}(b) and (f) in Section~\ref{ro}, the 
restriction to $\,\Sigma\,$ of the homo\-mor\-phism (c) in 
Section~\ref{ro} is injective, making $\,\Sigma\,$ Abel\-i\-an. Now (a) in
Section~\ref{ro} implies that the image of $\,\Sigma'$ under the projection
$\,(r,u)\mapsto u\,$ spans a vector sub\-space $\,\lz\subseteq\xe$
satisfying condition (\ref{ace}-D), and so Remark~\ref{lagrg} gives 
$\,\dim\lz\le n-2$. Due to \hbox{(\ref{ace}-D)} and (a) in Section~\ref{ro}, 
$\,\hp'\nh=\{(1,0,\mathrm{Id})\}\times\bbR\times\lz\,$ {\it is an Abel\-i\-an 
subgroup of\/ $\,\hp$, containing\/ $\,\Sigma$, and the group operation in\/ 
$\,\hp'$ identified with\/ $\,\bbR\times\lz\,$ coincides with the addition in
the vector space\/} $\,\bbR\times\lz$. 

At the same time, the (necessarily compact) fibre of the bundle (\ref{dsc})
over the $\,\bbZ$-co\-set of $\,(\log t)/(\log q)\,$ is obviously the quotient 
$\,M\nnh_t\w=[\{t\}\times\bbR\times\mv]/\Sigma$. Compactness of
$\,M\nnh_t\w$ implies surjectivity of the
linear operator $\,\lz\ni u\mapsto u(t)\in\mv\hs$ for every
$\,t\in(0,\infty)$, since otherwise a nonzero linear functional vanishing on
its image, composed with the projection
$\,\{t\}\times\bbR\times\mv\nh\to\mv\nh$, would descend -- according to
(b) in Section~\ref{ro} -- to an unbounded function $\,M\nnh_t\w\to\bbR$.
Thus, $\,\dim\lz\ge n-2=\dim\mv$ which, due to the opposite
inequality in the last paragraph, gives both (\ref{ace}-A) and
(\ref{ace}-E). Remark~\ref{eqdif} with $\,I\nh=(0,\infty)\,$ and the
italicized conclusion of the
preceding paragraph, combined with compactness of each of the quotients  
$\,M\nnh_t\w$ (and the obvious proper
discontinuity of the action of $\,\Sigma\,$ on $\,\{t\}\times\bbR\times\mv$)
show that $\,\Sigma'$ is a lattice in $\,\bbR\times\lz$.

Finally, according to Remark~\ref{cnjug}, the right-hand side of (\ref{pru})
describes the conjugation by our $\,\hga\,$ in (\ref{gen}) applied to
$\,(1,0,\mathrm{Id},r,u)\in\Sigma$, which we identify here with 
$\,(r,u)$. As this conjugation obviously sends the kernel $\,\Sigma\,$ onto 
itself, we get (\ref{ace}-C), and so 
$\,\varPi\nh(\bbR\times\lz)=\bbR\times\lz\,$ (since $\,\Sigma'$ is a 
lattice in $\,\bbR\times\lz$). Now (\ref{pru}) yields (\ref{ace}-B), 
which completes the proof.
\end{proof}
\begin{lemma}\label{cnseq}Under the hypotheses preceding\/ {\rm(\ref{pru})},
let a vector sub\-space\/ $\,\lz\subseteq\xe$ satisfy\/ {\rm(\ref{ace}-A)}
-- {\rm(\ref{ace}-C)}, a basis\/ $\,u\nh^+_1\nnh,u^-_1,\dots,u\nh^+_m,u^-_m$ of\/
$\,\xe^\bbC$ containing a basis\/ $\,u_1\w,\dots,u_m\w$ of\/ $\,\lz^\bbC$ be
chosen as in Theorem\/~{\rm\ref{spctr}}, and\/ $\,\gy_1\w,\dots,\gy_m\w$ be
the corresponding complex characteristic roots of\/
$\,\bc\hh T:\xe\nh\to\xe\hs$ selected from\/
$\,\gy\nh^+_1,\gy^-_1,\dots,\gy\nh^+_m,\gy^-_m$ given by\/
{\rm(\ref{upm})}. Then
\begin{enumerate}
\item[(i)] $\gy_0\w=q\nh^{-\nnh1}$ and\/ $\,\gy_1\w,\dots,\gy_m\w$ form a\/ 
$\,\mathrm{GL}\hh(\bbZ)$-spec\-trum,
\end{enumerate}
in the sense that they are the complex roots of some\/
$\,\mathrm{GL}\hh(\bbZ)$-pol\-y\-no\-mi\-al of degree\/ $\,m+1$, defined as
in Section\/~{\rm\ref{gp}}, and
\begin{enumerate}
\item[(ii)] the product\/ $\,\gy_1\w\nh\ldots\gy_m\w$ equals\/ $\,q\, $ or\/
$\,-q$.
\end{enumerate}
Furthermore, assuming in addition that
\begin{enumerate}
\item[(iii)] one of\/ $\,\mu^\pm$ is a power of\/ $\,q\,$ with a rational 
exponent,
\end{enumerate}
we have the following conclusions.
\begin{enumerate}
\item[(iv)] Both\/ $\,\mu^\pm$ are powers of\/ $\,q\,$ with integer exponents.
\item[(v)] $\gy\nh^+_1,\gy^-_1,\dots,\gy\nh^+_m,\gy^-_m$ are all distinct,
real and positive.
\item[(vi)] Exactly one of\/ $\,\gy_1\w,\dots,\gy_m\w$ equals\/ $\,q$.
\item[(vii)] Just one, or none of\/ $\,\gy_1\w,\dots,\gy_m\w$ equals\/ $\,1\,$
if\/ $\,n\,$ is even, or odd.
\item[(viii)] Those\/ $\,\gy_1\w,\dots,\gy_m\w$ not equal to\/ $\,q\,$ or\/
$\,1\,$ form pairs of mutual inverses.
\item[(ix)] $\varOmega(u\nh^\pm_i,u\nh^\pm_j)=0\,$ for all\/
$\,i,j\in\{1,\dots,m\}\,$ and both signs\/ $\,\pm\hs$.
\item[(x)] $\varOmega(u\nh^\pm_i,u\nh^\mp_j)\ne0\,$ if and only if\/ 
$\,i+j=m+1$,
\end{enumerate}
\end{lemma}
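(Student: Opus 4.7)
The overall plan has three stages: read off (i)-(ii) from the lattice-preservation property (\ref{ace}-C) of the operator $\varPi$; derive (iv) from assumption (iii) via the cyclic root-group classification (\ref{drg}), and then (v)-(viii) by analyzing the irreducible factorization of the resulting $\mathrm{GL}(\bbZ)$-polynomial; finally obtain (ix)-(x) from the symplectic-similitude identity in item (g) of Section~\ref{ro}. The main obstacle I expect is step (iv), where (\ref{drg}) must be carefully exploited to rule out non-integer rational exponents.

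For (i)-(ii): $\varPi$ is a linear endomorphism of the $(m+1)$-dimensional real space $\bbR\times\lz$ preserving the lattice $\Sigma'$ by (\ref{ace}-C), so any $\bbZ$-basis of $\Sigma'$ realizes $\varPi$ as an element of $\mathrm{GL}(m+1,\bbZ)$, whose characteristic polynomial is a $\mathrm{GL}(\bbZ)$-polynomial of degree $m+1$ by (\ref{glz}). In a basis of $\bbR\times\lz^\bbC$ obtained by adjoining the $\bbR$-factor to Theorem~\ref{spctr}'s basis of $\lz^\bbC$, the formula (\ref{pru}) makes $\varPi$ upper triangular with diagonal $(q^{-1},\gy_1,\ldots,\gy_m)$, so these are the characteristic roots and (i) follows; the constant-term computation $\pm q^{-1}\gy_1\cdots\gy_m=\pm 1$ gives (ii).

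For (iv)-(viii), assuming (iii): combining (iii) with $\mu^+\mu^-=q^{-1}$ from (\ref{mpq}) shows both $\mu^\pm$ are rational powers of $q$, and (\ref{wlo}) makes them and all $\gy^\pm_j$ positive real. The roots of the $\mathrm{GL}(\bbZ)$-polynomial from (i) thus lie in a cyclic multiplicative subgroup of $(0,\infty)$, and so does the root group of each irreducible factor; by (\ref{drg}) each factor is cyclotomic or quadratic, and positivity forces cyclotomic factors to equal $\lambda-1$ and quadratic factors to have the form $(\lambda-a)(\lambda-a^{-1})$ for some $a\in(0,\infty)\setminus\{1\}$. Since $q^{-1}\ne 1$ is a root, its irreducible factor must be the quadratic $(\lambda-q^{-1})(\lambda-q)$, so $q$ appears as some $\gy_j$, and matching $q^1$ to an exponent $q^{m+1-2j+r/s}$ or $q^{m-2j-r/s}$ forces the rational $r/s$ to be an integer, proving (iv). Writing $\mu^+=q^k$ and $\mu^-=q^{-1-k}$ for integer $k$, an odd/even parity check on the exponent of $q$ in $\gy^+_i/\gy^-_j$ yields $\gy^+_i\ne\gy^-_j$, hence (v). A multiplicity count in the polynomial's factorization, combined with (v), forces exactly one $\gy_j=q$, no $\gy_j=q^{-1}$, and either one or zero $\gy_j=1$ according to the parity of $m-1$ (equivalently, of $n$), proving (vi) and (vii); the remaining $\gy_j$'s realize the root pairs of the residual quadratic factors, giving (viii).

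For (ix)-(x), by item (g) of Section~\ref{ro}, $\bc T$ is a symplectic similitude with multiplier $q^{-1}$, i.e.\ $\varOmega(\bc Tu,\bc Tu')=q^{-1}\varOmega(u,u')$. An induction on $\max(i,j)$, using the upper-triangular form of $\bc T$ from Theorem~\ref{spctr} to reduce $\bc Tu^\pm_i$ modulo $u^\pm_l$ with $l<i$, turns the similitude identity into $(\alpha\beta-q^{-1})\varOmega(u,u')=0$ where $\alpha,\beta$ are the appropriate diagonal values. Direct computation gives $\gy^+_i\gy^+_j=q^{2m+2-2i-2j+2k}$ (never $q^{-1}$, by parity) and $\gy^+_i\gy^-_j=q^{2m+1-2i-2j}$ (equal to $q^{-1}$ iff $i+j=m+1$), yielding (ix) and the ``only if'' half of (x). The ``if'' half follows from nondegeneracy of $\varOmega$ on $\xe^\bbC$, since all other pairings involving $u^+_i$ vanish.
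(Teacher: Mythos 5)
Your handling of (i)--(viii) and of the ``if'' half of (x) is sound and essentially the paper's own route: the characteristic polynomial of $\varPi$ extracted from the lattice invariance (\ref{ace}-C) via (\ref{glz}), then (\ref{drg}) applied to the irreducible factors once reality and positivity of all roots are secured from (iii), (\ref{mpq}) and (\ref{wlo}), and the odd-exponent parity check for (v).

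The gap is in (ix) and the ``only if'' half of (x). Theorem~\ref{spctr} only guarantees that $\bc\hh T$ is \emph{upper triangular} in the basis $u\nh^+_1,u^-_1,\dots,u\nh^+_m,u^-_m$, and your induction does not close for a merely triangularizing basis. Writing $\bc\hh Tu\nh^+_i=\lambda\nh^+_iu\nh^+_i+v$ and $\bc\hh Tu^-_j=\lambda^-_ju^-_j+w$ with $v,w$ in the span of lower-index basis vectors, item (g) of Section~\ref{ro} gives
\[
(\lambda\nh^+_i\lambda^-_j-q^{-1})\,\varOmega(u\nh^+_i,u^-_j)
=-\,\lambda\nh^+_i\varOmega(u\nh^+_i,w)-\lambda^-_j\varOmega(v,u^-_j)
-\varOmega(v,w),
\]
and after applying the inductive hypothesis the right-hand side is still a combination of pairings $\varOmega(u\nh^+_{i'},u^-_{j'})$ with $i'+j'=m+1$ --- exactly the pairings that (x) forces to be \emph{nonzero}. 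For instance, with $m=2$ the quantity $\varOmega(u\nh^+_2,u^-_2)$ picks up contributions from $\varOmega(u\nh^+_2,u^-_1)$ and $\varOmega(u\nh^+_1,u^-_2)$ through the off-diagonal entries of $\bc\hh T$, so the similitude identity does not reduce to $(\alpha\beta-q^{-1})\varOmega(u,u')=0$ as you assert; the same contamination occurs for $\varOmega(u\nh^\pm_i,u\nh^\pm_j)$ in (ix). The paper avoids this by using a basis that genuinely \emph{diagonalizes} $\bc\hh T$: once (v) is established, all $2m$ eigenvalues are distinct, each $u\nh^\pm_j$ may be replaced by the unique eigenvector of $\bc\hh T$ in its coset modulo $\xe^{\bbC}_{j-1}$, and these eigenvectors still contain a basis of $\lz^{\bbC}$ because $\lz^{\bbC}$, being $\bc\hh T$-invariant with distinct eigenvalues, is itself spanned by eigenvectors of $\bc\hh T$. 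With such a basis the similitude identity yields (ix) and (x) in one line. Your argument needs this re-choice of basis (or an equivalent generalized-eigenspace argument); as written, the inductive step fails.
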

\begin{proof}Assertion (i) is immediate from (\ref{pru}) and (\ref{ace}-C)
along with (\ref{glz}), and (ii) from (i). Assuming (iii), we see -- using 
(\ref{upm}), (\ref{mpq}) and (\ref{drg}) -- that, for the
$\,\mathrm{GL}\hh(\bbZ)$-pol\-y\-no\-mi\-al 
$\,P\hs$ with the roots $\,\gy_0\w,\dots,\gy_m\w$,
\begin{enumerate}
\item[(xi)] the irreducible factors of $\,P\hs$ must all be linear or
quadratic,
\end{enumerate}
higher degree cyclo\-tom\-ic polynomials being excluded since the roots are
all real. Thus, one of
$\,\gy_1\w,\dots,\gy_m\w$ equals $\,q$, to match
$\,\gy_0\w=q\nh^{-\nnh1}\nnh$, and (\ref{upm}) combined with (\ref{mpq})
yields (iv). Since $\,|\hh\gy^\pm_j|$ is, for either sign $\,\pm\hs$, a
strictly monotone function of $\,j$, to prove (v) it suffices to consider
the case $\,q^{m+1-2\hn j}\mu^\pm\nh=q^{m+1-2i}\mu^\mp\nh$, that is,
$\,\mu^\pm\nnh/\mu^\mp\nh=q^{2(j-i)}\nh$. Multiplied by
$\,\mu\nh^\pm\nnh\mu^\mp\nh=q\nh^{-\nnh1}\nh$, cf.\ (\ref{mpq}), this makes 
$\,(\mu\nh^\pm)^2$ a power of $\,q\,$ with an {\it odd\/} integer exponent,
contrary to (iv), so that (v) follows. From (iii) and (xi) we now get 
(viii).

For our basis $\,u\nh^\pm_j$ of $\,\xe\nh$,
di\-ag\-o\-nal\-iz\-ing $\,\bc\hh T\,$ with the eigen\-values
$\,\gy^\pm_j\nnh=\nh q^{m+1-2\hn j}\mu^\pm\nh$,
(g) in Section~\ref{ro} gives
\[
\begin{array}{l}
q\nh^{-\nnh1}\nh\varOmega(u\nh^\pm_i,u\nh^\pm_j)
=q^{2m+2-2i-2\hn j}(\mu^\pm)^2\varOmega(u\nh^\pm_i,u\nh^\pm_j)\hh,\\
q\nh^{-\nnh1}\nh\varOmega(u\nh^\pm_i,u\nh^\mp_j)
=q^{2m+2-2i-2\hn j}\mu\nh^+\nh\mu\hn^-\varOmega(u\nh^\pm_i,u\nh^\mp_j)\hh.
\end{array}
\]
Thus, the inequality $\,\varOmega(u\nh^\pm_i,u\nh^\pm_j)\ne0\,$ would, again,
make $\,(\mu\nh^\pm)^2$ a power of $\,q\,$ with an odd integer exponent, 
contradicting (iv), which yields (ix). Similarly, assuming that
$\,\varOmega(u\nh^\pm_i,u\nh^\mp_j)\ne0$, we now get, from (\ref{mpq}),
$\,i+j=m+1$. The converse 
implication needed in (x) follows, via (ix), from nondegeneracy of
$\,\varOmega$.
\end{proof}
\begin{lemma}\label{abcde}With the assumptions and notations of
Lemma\/~{\rm\ref{cnseq}}, let $\,\lz\,$ this time satisfy all of\/
{\rm(\ref{ace})}. Then conditions\/ {\rm(i)} -- {\rm(x)} in
Lemma\/~{\rm\ref{cnseq}} all hold, so that\/ $\,\mu^\pm$ and\/ 
$\,\gy^\pm_j$ are all real, while
\begin{enumerate}
\item[(i)] the number of pluses is different from that of minuses
\end{enumerate}
among the\/ $\,\pm\,$ superscripts of those
$\,\gy\nh^+_1,\gy^-_1,\dots,\gy\nh^+_m,\gy^-_m$ which form the characteristic
roots\/ $\,\gy_1\w,\dots,\gy_m\w$ of\/ $\,\bc\hh T:\lz\to\lz$. Finally,
for the basis\/ $\,\cs=\{u_1\w,\dots,u_m\w\}$ of\/ $\,\lz$ contained in\/ 
the basis\/ $\,\{u\nh^+_1\nnh,u^-_1,\dots,u\nh^+_m,u^-_m\}\,$ of\/ 
$\,\xe\nh$, with\/ $\,|\hskip2.3pt|\,$ denoting cardinality,
\begin{enumerate}
\item[(ii)] $|\cs\cap\{u\nh^+_1\nnh,u^-_1,\dots,u\nh^+_j,u^-_j\}|\le j\,$
whenever\/ $\,j=1,\dots,m$,
\item[(iii)] $|\cs\cap\{u\nh^+_i,u^-_j\}|=1\,$ if\/
$\,i,j\in\{1,\dots,m\}\,$ and\/ $\,i+j=m+1$.
\end{enumerate}
\end{lemma}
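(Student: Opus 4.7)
The plan is to prove the lemma in three stages, corresponding to the three claims of Lemma~\ref{abcde} beyond the assertion that all of Lemma~\ref{cnseq} applies. First I deduce (ii) of Lemma~\ref{abcde} from condition (E); then exploit (D) to force condition (iii) of Lemma~\ref{cnseq}, which triggers all of its conclusions (i)--(x); finally, I derive (i) and (iii) of Lemma~\ref{abcde}.

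For (ii), each $\xe_j$ consists of $\xe$-solutions valued in $\mathrm{Ker}\,A^j$, a space of dimension $j$ by (\ref{deo}). The isomorphism $\lz\to\mv$ given by evaluation at any $t\in(0,\infty)$ thus restricts to an injection $\lz\cap\xe_j\hookrightarrow\mathrm{Ker}\,A^j$, so $\dim(\lz\cap\xe_j)\le j$. By Theorem~\ref{spctr}, $\{u_1^+,u_1^-,\ldots,u_j^+,u_j^-\}$ is a basis of $\xe_j^\bbC$, and the elements of $\cs$ lying in this set form a linearly independent subset of $\lz^\bbC\cap\xe_j^\bbC$, whose cardinality therefore does not exceed $j$.

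For the core step of stage two, property (g) of Section~\ref{ro} makes $\bc T$ a symplectic similitude of scale $q^{-1}$, while (D) makes $\lz\subseteq\xe$ Lagrangian. The induced $\bc T$-action on $\xe/\lz\cong\lz^*$ is therefore conjugate via $\varOmega$ to $q^{-1}$ times the contragredient of $\bc T|_\lz$, so the full spectrum of $\bc T|_\xe$ decomposes under the involution $\gy\leftrightarrow q^{-1}/\gy$; by (\ref{upm}) and $\mu^+\mu^-=q^{-1}$ this involution sends $\gy_j^\pm$ to $\gy_{m+1-j}^\mp$. If two eigenvalues $\gy_j^+,\gy_i^-$ coincide, then $(\mu^+)^2=q^{2j-2i-1}$ and (iii) of Lemma~\ref{cnseq} is immediate; otherwise all $2m$ eigenvalues are distinct, $\bc T|_{\xe^\bbC}$ is diagonal in the basis of Theorem~\ref{spctr}, and $\cs$ picks one eigenvector from each of the $m$ involution pairs. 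Writing $\cs=\{u_i^+:i\in S_+\}\cup\{u_{m+1-i}^-:i\in S_-\}$, the product of the selected eigenvalues is $(\mu^+)^{|S_+|-|S_-|}$ times an integer power of $q$; Lemma~\ref{cnseq}(ii) together with the filtration constraint (ii) from stage one rules out $|S_+|=|S_-|$, and so $\mu^+$ must itself be a rational power of $q$.

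Once (iii) of Lemma~\ref{cnseq} is in hand, the proof of that lemma supplies (iv)--(x): in particular, $\mu^\pm$ are integer powers of $q$ (hence real), all $2m$ eigenvalues $\gy_j^\pm$ are distinct, and the orthogonality relations (ix), (x) hold. For (iii) of Lemma~\ref{abcde}, the involution pairs $\{u_i^+,u_{m+1-i}^-\}$ partition $\{u_1^+,u_1^-,\ldots,u_m^+,u_m^-\}$: Lagrangianity together with (x) of Lemma~\ref{cnseq} gives $|\cs\cap\{u_i^+,u_{m+1-i}^-\}|\le 1$, while the symplectic partition forces at least one element of each pair to lie in $\cs$. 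For (i) of Lemma~\ref{abcde}, repeating the product computation with $\mu^\pm$ now known to have integer exponents renders the case $n_+=n_-$ incompatible with $\prod\gy_k=\pm q$ given the constraints from (ii). The principal obstacle I anticipate is the combinatorial step at the end of stage two: combining Lagrangianity, the $\mathrm{GL}(\bbZ)$-integrality from (C), and the filtration bound (ii) to exclude $|S_+|=|S_-|$ requires a case analysis whose details I have not worked out in full.
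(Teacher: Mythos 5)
Your toolkit is recognizably the paper's own --- the filtration bound for (ii), the product of the selected eigenvalues compared against Lemma~\ref{cnseq}(ii), and the symplectic pairing of dual eigenvectors for (iii) --- but the one step you explicitly leave open is precisely the crux of the whole proof, so as written the argument has a genuine gap. That step is the exclusion of $\,|S_+|=|S_-|\,$ (equivalently, assertion (i)). The paper settles it with no case analysis and without invoking Lagrangianity or the lattice condition beyond Lemma~\ref{cnseq}(ii): writing $\,u_j\w=u\nh^\pm_{k(j)}$, your part (ii) forces $\,k(j)\ge j$, so $\,k(j)=j+\ell(j)\,$ with $\,\ell(j)\ge0\,$ and $\,\gy_j\w=q^{m+1-2\hn j}\mu^\pm q^{-2\ell(j)}$; since $\,\sum_{j=1}^m(m+1-2j)=0$, equal numbers of pluses and minuses would give $\,\gy_1\w\cdots\gy_m\w=(\mu\nh^+\mu^-)^{m/2}q^{-2\sum_j\ell(j)}=q^{-m/2-2\sum_j\ell(j)}$, a power of $\,q\,$ with exponent at most $\,-1$, which cannot equal $\,\pm\hs q$. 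That single inequality $\,\ell(j)\ge0\,$ is all that is needed, and it is exactly what your stage-one bound already provides.

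A second, smaller loose end: in your ``all eigenvalues distinct'' branch you pass from $\,(\mu\nh^+)^{|S_+|-|S_-|}=\pm\hs q^{\hs k}$ to ``$\mu\nh^+$ is a rational power of $\,q$,'' which fails if $\,\mu^\pm$ are non-real (a non-real number of modulus $\,q^{-1/2}$ can well have a real integer power). Non-reality must be excluded first; the paper does so by noting that the spectrum of the real operator $\,\bc\hh T:\lz\to\lz\,$ is conjugation-invariant and the conjugate of $\,\gy\nh^+_j$ is $\,\gy^-_j$ in the non-real case, so non-reality would force $\,|S_+|=|S_-|$, contradicting (i); positivity then comes from (\ref{wlo}). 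This also shows that the logical order cannot be the one you propose: (i) (or at least $\,|S_+|\ne|S_-|$) has to be in hand \emph{before} one can extract Lemma~\ref{cnseq}(iii), rather than being deduced afterwards by ``repeating the product computation.'' The remaining pieces --- your derivation of (ii) from (\ref{ace}-E) and your partition-into-dual-pairs argument for (iii) --- are correct and coincide with the paper's.
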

\begin{proof}If (ii) failed to hold, the evaluation operator in (\ref{ace}-E),
complexified if necessary, would send $\,\{u\hn_1\w,\dots,u\nh_{j+1}\w\}\,$
into the span of the vectors $\,e\nh_1\w,\dots,e\nnh_j\w$ appearing in
(\ref{upm}), contrary to its injectivity. From (ii) we obtain
\begin{enumerate}
\item[(iv)] $k(j)\ge j\,$ for all $\,j=1,\dots,m$,
\end{enumerate}
$k(j)\in\{1,\dots,m\}\,$ being such that $\,u_j\w=u\nh^\pm_{k(j)}$ with some
sign $\,\pm\hs$, since, otherwise, 
$\,\cs\cap\{u\nh^+_1\nnh,u^-_1,\dots,u\nh^+_{k(j)},u^-_{k(j)}\}\,$ would
have at least $\,j>k(j)\,$ elements.

To prove (i), we now assume its negation, and evaluate the product of those 
$\,\gy^\pm_j\nnh=\nh q^{m+1-2\hn j}\mu^\pm$ 
in (\ref{upm}) which constitute $\,\gy_1\w,\dots,\gy_m\w$. Both factors 
$\,\mu\nh^+\nh,\mu\hn^-$ appear in this product the same number of times,
$\,m/2$, which makes $\,m\,$ even, and by (\ref{mpq}) their occurrences
contribute to our product $\,\gy_1\w\nh\ldots\gy_m\w$ a total factor of
$\,q^{-m/2}\nh$. On the other hand, the set 
$\,\{q^{m+1-2\hn j}:1\le j\le m\}=\{q^{m-1}\nh,q^{m-3}\nh,\dots,q^{1-m}\}$ is 
closed under taking inverses, so that $\,\prod_{j=1}^mq^{m+1-2\hn j}=1$. 
Writing $\,k(j)=j+\ell(j)$, with $\,\ell(j)\ge0\,$ due to (iv), we now have
\begin{equation}\label{lje}
\gy_j\w=\gy^\pm_{k(j)}\nh=\,q^{m+1-2k(j)}\mu^\pm\nh
=\,q^{m+1-2\hn j}\mu^\pm q^{-\nh2\ell(j)}\nh,
\end{equation}
making $\,\gy_1\w\nh\ldots\gy_m\w$ equal to $\,1\,$ times $\,q^{-m/2}$
times $\,\prod_{j=1}^mq^{-\nh2\ell(j)}\nh$, that is, a power of $\,q\,$ with a 
negative exponent, contrary to Lemma~\ref{cnseq}(ii).

Next, (i) implies that $\,\mu^\pm$ and $\,\gy^\pm_j$ are all real, for
otherwise $\,\gy_j\w$ in (\ref{lje}), forming along with
$\,\gy_0\w=q\nh^{-\nnh1}$ the spectrum of a real matrix, would come in
non\-real conjugate pairs, with the same number of positive real parts as
negative ones. Thus, by (\ref{wlo}), $\,\mu^\pm\nh>0$. 
Using (i) and reality of $\,\mu^\pm$ we now evaluate the
product $\,\gy_1\w\nh\ldots\gy_m\w=\pm q$ in Lemma~\ref{cnseq}(ii),
observing that not all $\,\mu\nh^+\nnh,\mu^-$ undergo pairwise
``cancellations'' (forming the product $\,q\nh^{-\nnh1}$), but instead
Lemma~\ref{cnseq}(ii) equates some power of $\,\mu\nh^+$ or $\,\mu^-\nh$, with
a positive integer exponent, to a power of $\,q$, and so positivity of
$\,\mu^\pm$ yields condition (iii) in Lemma~\ref{cnseq}, which in turn implies
(iv) -- (x).

Finally, the $\,m$-el\-e\-ment family 
$\,\mathcal{P}\nh=\{\{u\nh^+_i\nnh,u^-_j\}:i+j=m+1\}$ forms a partition
of $\,\{u\nh^+_1\nnh,u^-_1,\dots,u\nh^+_m,u^-_m\}\,$ into disjoint
two-el\-e\-ment sub\-sets, while the mapping \hbox{$F:\cs\to\mathcal{P}\hs$} 
given by $\,u\in F(u)\,$ is injective: 
$\,|\cs\cap\{u\nh^+_i,u^-_j\}|\le1\,$ if $\,i+j=m+1$, or else 
Lemma~\ref{cnseq}(x) would contradict (\ref{ace}-D). As $\,|\cs|=m$,
surjectivity of $\,F$ thus follows, proving (iii).
\end{proof}
We now complete the proof of Theorem~\ref{modif} by observing that a vector
sub\-space $\,\lz\subseteq\xe\hs$ with (\ref{ace}) gives rise to a sub\-set 
$\,\zs\,$ of $\,\vr\nh=\{1,\dots,2m\}$, for $\,m=n-2$, satisfying conditions
(a) -- (e) in Theorem~\ref{noset}, which -- according to Theorem~\ref{noset}
-- cannot exist. Namely, using Lemma~\ref{cnseq}(iv) we define
$\,\r\in\bbZ\,$ by $\,\mu\nh^+\nh=q^\r\nh$, so that, by (\ref{mpq}),
$\,\mu^-\nh=q\nh^{-\hn\r-1}\nh$.
Next, the obvious or\-der-pre\-serv\-ing bijection
\begin{equation}\label{bij}
\vr\nh=\{1,\dots,2m\}\,\to\,\{u\nh^+_1\nnh,u^-_1,\dots,u\nh^+_m,u^-_m\}
\end{equation}
(notation of Lemma~\ref{cnseq}) which, explicitly, sends $\,a\in\vr\hs$ to
$\,u^-_j$ when $\,a=2\hn j$ is even, or to $\,u\nh^+_j$ for odd
$\,a=2\hn j-1$, is used from now on to identify the two sets, and we declare
$\,\zs\,$ to be the sub\-set of $\,\vr\hs$ corresponding under (\ref{bij}) 
to the basis $\,\cs=\{u_1\w,\dots,u_m\w\}$ of $\,\lz$. The function assigning
to each $\,u\nh^\pm_j$ the corresponding eigen\-value 
$\,\gy^\pm_j=q^{m+1-2\hn j}\mu^\pm$ treated, via (\ref{bij}), as defined on
$\,\vr\nh$, is now easily seen to be given by
$\,\vr\ni a\mapsto q^{\ep(a)}\nh$, with (\ref{aph}-i). Referring to (a) -- (e)
in Theorem~\ref{noset} simply as (a) -- (e), we observe that assertions (ii)
and (iii) of Lemma~\ref{abcde} yield (e) and (c), while (b), the first 
claim in (a) and (d) trivially follow from Lemma~\ref{cnseq}(vi)\hh-\hh(viii)
(the latter guaranteed to hold by Lemma~\ref{abcde}). Finally, the relation
$\,\vf(a_1\w)\notin\zs$ in (a) which, in view of (\ref{aph}-iii) and
(\ref{aph}-v), amounts to 
$\,q\nh^{-\nnh1}\nnh\notin\{\gy_1\w,\dots,\gy_m\w\}$, is thus immediate
since otherwise, due to Lemma~\ref{cnseq}(viii), the inverse $\,q\,$ of
$\,q\nh^{-\nnh1}$ would occur on the list $\,\gy_1\w,\dots,\gy_m\w$ {\it
twice}, contradicting Lemma~\ref{cnseq}(v).

\end{document}